\numberwithin{equation}{section}
\newtheorem*{thma}{Theorem A}
\newtheorem*{thmb}{Theorem B}
\newtheorem*{thmc}{Theorem C}
\newtheorem*{main}{Main Theorem}
\newtheorem{thm}{Theorem}[section]
\newtheorem{lem}[thm]{Lemma}
\newtheorem{defn}[thm]{Definition}
\newtheorem{cor}[thm]{Corollary}
\newtheorem{rem}[thm]{Remark}
\newtheorem{eg}[thm]{Example}
\newtheorem{prop}[thm]{Proposition}
\newcommand{\pic}{\text{Pic}}
\newcommand{\bptwo}{\text{Bir}(\mathbf{P}^2(\mathbb{C}))}
\newcommand{\ppc}{\mathbf{P}^2(\mathbb{C})}
\title{Salem/Pisot Numbers in the Weyl Spectrum} 
\author{Kyounghee Kim}
\address{Department of Mathematics\\
         Florida State University\\
         Tallahassee, FL 32308}
\email{kim@math.fsu.edu}
\subjclass[2023]{20F55,37F99,14E07}
\keywords{Weyl Spectrum, Dynamical Spectrum, Salem number, Pisot number, Orbit Data}
\begin{document}
\maketitle

\begin{abstract}
In this article, we define orbit data for birational maps of $\mathbf{P}^2(\mathbb{C})$ and show that this data uniquely determines the dynamical degree by providing minimal polynomials for dynamical degrees in terms of orbit data. Leveraging this relationship, we recursively identify all Salem and Pisot numbers that appear in the Weyl spectrum of the union of the Coxeter groups $W_n$ associated with $E_n$ and the set of all dynamical degrees of birational maps of $\mathbf{P}^2(\mathbb{C})$. Furthermore, we demonstrate that all accumulation points of Pisot numbers less than or equal to 2 are present in the Weyl spectrum.
\end{abstract}

\section{Introduction}\label{S:intro}

A Pisot number $\tau$ is an algebraic integer greater than $1$ whose Galois conjugates all have modulus $\lneq1$. A Salem number $\lambda$, defined by Salem \cite{Salem:1945}, is an algebraic integer whose minimal polynomial is of degree at least $4$, and all of whose Galois conjugates, except $\lambda$ and $\lambda^{-1}$, have modulus $1$. Salem and Pisot numbers are closely related in the sense that every Pisot number is a limit of a sequence of Salem numbers. However, it is not known that every limit of a sequence of distinct Salem numbers is a Pisot number. Salem and Pisot numbers have appeared in many different areas of Mathematics. (See \cite{Erdos:1939, Boyd:1977, Sury:1992, HironakaE:2001, Tanigawa:2004}.) For a detailed history, we refer to a comprehensive survey by Smyth \cite{Smyth:2015}.

\vspace{1ex}

Salem and Pisot numbers appear in Dynamical Systems. Diller and Favre \cite{Diller-Favre:2001} and Blanc and Cantat \cite{blancdynamical} showed that the exponential growth rate of the algebraic degree of a birational map on $\mathbf{P}^2(\mathbb{C})$ under iteration is given by either a Salem or Pisot number. This exponential growth rate of the degree is called \textit{the dynamical degree}. However, not every Salem/Pisot number can be realized. Blanc and Cantat \cite{blancdynamical} show that infinitely many Pisot numbers greater than the golden ratio cannot be realized as the dynamical degree of a birational map on $\mathbf{P}^2(\mathbb{C})$. 

\vspace{1ex}
According to Diller and Favre \cite{Diller-Favre:2001}, McMullen \cite{McMullen:2007}, Uehara \cite{Uehara:2010}, Kim \cite{Kim:2023}, and Blanc and Cantat \cite{blancdynamical}, the set of all dynamical degrees of birational maps on $\mathbf{P}^2(\mathbb{C})$ coincides with the Weyl spectrum $\Lambda_W$. This spectrum is defined as the closure of the union of all Salem numbers contained within the Coxeter groups $W_n$ associated with $E_n$, for $n \geq 3$. The primary objective of this article is to identify the Salem and Pisot numbers within the Weyl spectrum and investigate their properties as dynamical degrees of birational maps on $\mathbf{P}^2(\mathbb{C})$.

\vspace{1ex}
Considering a family of Jonqui\'{e}res maps of degree $d\ge 2$, Bot \cite{Bot:2024} studied the ordinal of the dynamical degrees of all birational maps on $ \mathbf{P}^2(\mathbb{C})$. Noice that there are birational maps that are not conjugate to Jonqui\'{e}res maps. For instance, the Salem number $\delta \approx 1.96683$, the largest root of \[ t^{12}-2 t^{11} + t^{10} - 2 t^9+t^7-t^6+t^5-2 t^3+t^2-2 t+1 \] is the spectral radius of the element in Wyle Spectrum. (See Example \ref{E:noJ}.) However, this Salem number is not realized as a dynamical degree of a Jonqui\'{e}res map.

\vspace{1ex}
 In our paper, we consider the set of all dynamical degrees of birational maps on $\mathbf{P}^2(\mathbb{C})$. 

\vspace{1ex}

In this work, we leverage theories from Coxeter groups and birational maps on $\mathbf{P}^2(\mathbb{C})$ to show that the Weyl spectrum can be fully determined recursively with respect to the Weyl degree.

\begin{main} The Weyl spectrum $\Lambda_W$ is determined recursively with respect to the Weyl degree. \end{main}

For each element $\omega \in \bigcup_n W_n$, we define a Weyl degree, $\deg(\omega) \in \mathbb{N}$ (see Section \ref{SS:wdeg} for a precise definition). Let $\Omega_d$ represent the set of all elements in $\bigcup_n W_n$ with Weyl degree equal to $d$, so that $\bigcup_n W_n = \bigcup_d \Omega_d$. We will identify all Salem numbers in $\Lambda_W$ by recursively identifying the Salem numbers in each $\Omega_d$ for all $d$.

\vspace{1ex}

The Main Theorem is proved by establishing the following technical result:

%
%
%
%
%

\begin{thma}
The Weyl spectrum $\Lambda_W$ is given by 
\[ \Lambda_W = \bigcup_{d\ge2} \Lambda_d,\] where $\Lambda_d$ is determined recursively with respect to $d$: 
\[ \Lambda_d =  \{ \rho(\chi_{M, \bar n}): M \in (\mathcal{M}_d \cup  \mathcal{M}'_d), \bar n \in \mathbb{N}^k, k = |M|-1\}. \]
Here, $\rho$ denotes the largest real root, $|M|$ denotes the size of $M$, $\chi$ is defined in (\ref{E:charpoly}), $\mathcal{M}_d$ for $d \ge 2$ is a finite set defined recursively by Theorem \ref{T:allbase}, and $\mathcal{M}'_d$ for $d \ge 2$ is defined in (\ref{E:setM}).
\end{thma}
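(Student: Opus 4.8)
I would prove the two inclusions separately, the whole argument resting on the identification $\Lambda_W=\Lambda_{\mathbb{C}}$: by Blanc--Cantat \cite{blancdynamical} the dynamical spectrum $\Lambda_{\mathbb{C}}$ is already closed, so \emph{every} element of $\Lambda_W$ is an honest dynamical degree $\lambda(f)$ for some $f\in\bptwo$, and conversely any number realized as some $\lambda(f)$ lies in $\Lambda_W$. Thus no separate chasing of limit points is needed, and Theorem~A amounts to matching the set of dynamical degrees with the set of roots $\rho(\chi_{M,\bar n})$.

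For $\Lambda_W\subseteq\{\rho(\chi_{M,\bar n})\}$, let $\lambda=\lambda(f)>1$ (the value $1$, realized by a periodic quadratic map, being immediate). By Diller--Favre \cite{Diller-Favre:2001} I may replace $f$ by a birationally conjugate, algebraically stable model on a rational surface $X$, so that $\lambda=\rho(F_*)$ on $\pic(X)$ and the orbit data $M$ together with the vector $\bar n\in\mathbb{N}^{|M|}$ of orbit lengths are well defined. By the theorem that orbit data determine the dynamical degree (the minimal polynomial of $\lambda$ divides $\chi_{M,\bar n}$ of (\ref{E:charpoly})), one gets $\lambda=\rho(\chi_{M,\bar n})$. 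Now split by Theorem~\ref{inT:ddeg}: if $f$ is birationally an automorphism, all exceptional orbits are finite and the normalized orbit data of $f$ is, by the recursive classification of degree-$d$ base data in Theorem~\ref{T:allbase}, a member of $\mathcal{M}_d$ (after a composition-reduction that does not change $\rho(\chi_{M,\bar n})$); if $f$ is not birationally an automorphism, some exceptional orbit is infinite, $\chi_{M,\bar n}$ takes the special shape recorded in (\ref{E:setM}), and $M\in\mathcal{M}'_d$. Either way $\lambda\in\{\rho(\chi_{M,\bar n}):M\in\bigcup_d(\mathcal{M}_d\cup\mathcal{M}'_d)\}$.

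For the reverse inclusion I would show each $\rho(\chi_{M,\bar n})$ with $M\in\mathcal{M}_d\cup\mathcal{M}'_d$ and $\bar n\in\mathbb{N}^{|M|}$ is an actual dynamical degree, hence in $\Lambda_{\mathbb{C}}=\Lambda_W$. For $M\in\mathcal{M}_d$ this is the realization theorem of McMullen \cite{McMullen:2007}, Uehara \cite{Uehara:2010}, and Kim \cite{Kim:2023}: one checks that the recursion of Theorem~\ref{T:allbase} only produces orbit data meeting the realizability criterion (the obstruction among the Galois conjugates of $\rho(\chi_{M,\bar n})$, equivalently the fixed-point/determinant condition of the orbit-data realization problem, vanishes), whence the data is realized by an automorphism of a blow-up of $\mathbf{P}^2(\mathbb{C})$, whose Picard action lies in $W_n$ by Nagata, so $\rho(\chi_{M,\bar n})\in\Lambda_{S,W}\subseteq\Lambda_W$. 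For $M\in\mathcal{M}'_d$ one exhibits the explicit properly birational (non-automorphism) map realizing the data that underlies definition (\ref{E:setM}); its dynamical degree is a Pisot number by Theorem~\ref{inT:ddeg}, and it lies in $\Lambda_{\mathbb{C}}=\Lambda_W$. Taking the union over all $d\ge2$ and $\bar n$ finishes this inclusion.

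The main obstacle is the coupling of combinatorics with geometry in the two bracketed claims: one must simultaneously prove the recursion of Theorem~\ref{T:allbase} is \emph{exhaustive} (every realizable automorphism orbit data appears, up to the reduction that fixes $\rho(\chi)$) and \emph{sound} (every listed datum is geometrically realizable), all the while tracking how $\chi_{M,\bar n}$ transforms under the degree-lowering moves and confirming these moves preserve both the largest real root and the Salem/Pisot factorization. The delicate sub-point is verifying the realizability condition against the recursion, i.e. that the determinantal obstruction of orbit-data realization is automatically satisfied at every stage; once that is in hand, the closedness and well-ordering of $\Lambda_{\mathbb{C}}$ supply everything else.
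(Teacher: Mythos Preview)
Your overall two-inclusion strategy is reasonable, but the route you take for the inclusion $\{\rho(\chi_{M,\bar n})\}\subseteq\Lambda_W$ is both unnecessarily geometric and contains a genuine error. The paper's proof of Theorem~A is a one-line consequence of Theorems~\ref{T:allsalem} and~\ref{T:allpisot}, and those in turn stay almost entirely on the Coxeter side.

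For $M\in\mathcal{M}_d$ you claim one must verify a realizability criterion and then invoke McMullen--Uehara--Kim to produce an automorphism with the prescribed orbit data $(M,\bar n)$. This is false as stated: the paper itself records (just before Section~\ref{S:coxeter}) that there exist $\omega\in W_{14}$ with \emph{no} rational surface automorphism $F$ satisfying $F_*=\omega$; the realization results only guarantee that the \emph{spectral radius} of $\omega$ is realized, not the specific orbit data. More to the point, no geometric realization is needed at all. By Lemma~\ref{L:alllength}, for every $M\in\mathcal{M}_d$ and every $\bar n\in\mathbb{N}^k$ there is an honest element $\omega\in W_n$ with $\mathcal{O}(\omega)=(M,\bar n)$, so $\rho(\chi_{M,\bar n})=\lambda(\omega)$ lies in $\cup_d\Lambda_d\subset\Lambda_W$ by definition. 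That is the paper's argument for Theorem~\ref{T:allsalem}.

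For $M'\in\mathcal{M}'_d$ you propose to ``exhibit the explicit properly birational map realizing the data.'' The paper does not do this in general (only for small Pisot numbers, in Section~\ref{S:pisot}), and no such general construction is available here. Instead, the paper uses Proposition~\ref{P:pisot}: each $\rho(\chi_{M',\bar n'})$ with $M'$ a principal submatrix of some $M\in\mathcal{M}_d$ is the limit, as the deleted orbit lengths tend to infinity, of the Salem numbers $\rho(\chi_{M,\bar n})$. These Salem numbers lie in $\Lambda_W$ by the previous paragraph, and $\Lambda_W$ is closed, so the Pisot limit lies in $\Lambda_W$ as well. Your identification $\Lambda_W=\Lambda_{\mathbb{C}}$ is correct and is what supplies closedness, but it should be used to close under limits of Coxeter spectral radii, not to demand a direct birational construction for each $M'\in\mathcal{M}'_d$.

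For the opposite inclusion $\Lambda_W\subseteq\{\rho(\chi_{M,\bar n})\}$, your plan to pass through an arbitrary $f\in\bptwo$ and its orbit data is also more circuitous than necessary, and the step ``if $f$ is not birationally an automorphism then $M\in\mathcal{M}'_d$'' is not justified: $\mathcal{M}'_d$ is defined as the set of principal submatrices of matrices in $\mathcal{M}_d$, and there is no a~priori reason the orbit-permutation matrix of a general non-automorphism should arise that way. The paper instead argues on the Coxeter side: every Salem number in $\Lambda_W$ is the spectral radius of some $\omega\in W_n$, hence equals $\rho(\chi_{M,\bar n})$ for some $M\in\mathcal{M}_d$ by Theorem~\ref{T:charpoly}; and every Pisot number in $\Lambda_W$ is, by Corollary~\ref{C:bc}, a limit from below of such Salem numbers, which the computation in Section~\ref{SS:limit} identifies with some $\rho(\chi_{M',\bar n'})$, $M'\in\mathcal{M}'_d$.
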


\vspace{1ex}
To prove this theorem, we show in Section \ref{S:coxeter} that the set of all Salem numbers in $\Omega_d$ is determined by a finite set of matrices, $\mathcal{M}_d$, along with ordered lists of positive integers. We also provide a formula (\ref{E:charpoly}) for the characteristic polynomial (up to a product of cyclotomic factors) of each element in $\Omega_d$. In Section \ref{S:sp}, we establish the recursive formula for $\mathcal{M}_d$ and determine $\mathcal{M}'_d$ for Pisot numbers.

\begin{rem}
According to Amara \cite[Theorem~3.1]{Amara:1966}, the set of all accumulation points of Pisot numbers less than or equal to 2 consists precisely of the algebraic numbers \[ \alpha_1 =\beta_1<\alpha_2<\beta_2<\alpha_3<\hat \theta'_2 < \beta_3<\alpha_4 <\cdots <\alpha_n<\beta_n <\cdots <2 \]  where $\alpha_n$ is the largest real root of $t^{n+1} - 2t^n + t - 1 $, $\beta_n$ is the largest real root of $t^{n+1} - (t^{n+1} - 1)/(t - 1) $, and $\hat{\theta}'_2$ is the largest real root of $t^4 - t^3 - 2t^2 + 1 $. 

\vspace{1ex}
To study the distribution of Salem numbers in the Weyl spectrum, Theorem A is particularly useful, as it provides a recursive formula in terms of the Weyl degree. For instance, in Section \ref{ApendA}, we demonstrate that all limit points of Pisot numbers less than or equal to 2 are represented in the Weyl spectrum. Furthermore, we construct sequences of Pisot numbers and Salem numbers within the Weyl spectrum that approach each limit point from below, thereby providing an explicit, infinite list of Pisot numbers between any two successive accumulation points.

 
 \end{rem}

\begin{rem}
Let $P \approx 1.6217$ be the largest real root of the polynomial \[ t^{16}-2 t^{15}+2 t^{14}-3 t^{13}+ 2 t^{12}-2 t^{11} + t^{10} + t^7-t^6+2 t^5-2 t^4+2 t^3-2 t^2+t-1.\]
Boyd first referenced this Pisot number in \cite{Boyd:1977}. Experimental computations suggest that the spectral radius of an element of Weyl degree $d \le 5$ is either less than the golden ratio or greater than $1.624$. Although the spectral radius is not necessarily monotonic with respect to the Weyl degree, we conjecture, based on computational evidence, that $P$ is absent from the Weyl spectrum. 
\end{rem}

For a birational map $f \in \bptwo$, there is the set $\mathcal{E}(f)$ of exceptional curves mapped to a point and the set $\mathcal{I}(f)$ of points of indeterminacy. Any irreducible curve in $\mathcal{E}(f)$ maps to a point under $f$.  Let us define $\mathcal{E}^*(f)$ as the set of exceptional curves mapped back to a curve:
\[ \mathcal{E}^*(f) = \{ C \in \mathcal{E}(f) : f^{n} (C) \in \mathcal{I}(f), \ \ \dim f^{n+1}(C) =1, n\ge 1 \} \]
For each $C_i \in \mathcal{E}^*(f)$, let $n_i$ be the smallest number of iterations $n$ such that $\dim f^{n+1}(C_i) =1$. We call the ordered tuple $(n_1, \dots, n_k)$, where $k=|\mathcal{E}^*(f)|$, the orbit lengths of $f$. 
In Section \ref{S:birational}, we define \textit{the orbit data} of $f$ as the numerical data obtained by orbits of curves in $\mathcal{E}^*(f)$. We show that the dynamical degree increases with respect to the orbit lengths. 
\begin{thmb}
Let $f$ and $g$ are birational maps on $\mathbf{P}^2(\mathbb{C})$. Suppose the orbit data of $f$ and $g$ are distinct, given by $\mathcal{O}(f)=(M, (n_1, \dots, n_k))$ and $\mathcal{O}(g)=(M, (n'_1, \dots, n'_k))$ respectively. Assume the following conditions hold:
\begin{enumerate}
\item  $n_i \le n'_i$ for all $i=1, \dots, k$,
\item $n_i \ne n'_i$ for some $i$, 
\item $\lambda(f)>1$.
\end{enumerate}
Then $\lambda(g) \gneq \lambda(f)$. 
\end{thmb}

The smallest accumulation point of Pisot numbers is the golden ratio. With the construction provided by Diller \cite{Diller:2011}, we show: 
\begin{thmc}
All Pisot numbers smaller than or equal to the golden ratio are realized as dynamical degrees of birational maps on $\mathbf{P}^2(\mathbb{C})$. 
\end{thmc}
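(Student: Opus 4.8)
The plan is to reduce Theorem C to the degree-$2$ part of the parametrization in Theorem A, made explicit through Diller's quadratic transformations preserving a plane cubic. First recall the classical picture of the small Pisot numbers (Dufresnoy--Pisot; see \cite{Smyth:2015}): the golden ratio $\varphi=\tfrac{1+\sqrt5}{2}$ is the smallest accumulation point of the Pisot set, and every Pisot number $\tau$ with $1<\tau\le\varphi$ is the unique root greater than $1$ of one polynomial from an explicit short list of families, each of the form $t^{n}(t^{2}-t-1)+r(t)$ with $r$ a fixed low-degree correction --- for instance $t^{n}(t^{2}-t-1)+(t^{2}-1)$, whose roots for $n\ge 1$ are the plastic number, the root of $t^{4}-t^{3}-1$, and so on, together with the companion family $t^{n}(t^{2}-t-1)+(t-1)$ beginning with the root of $t^{3}-t^{2}-1$; the dominant roots of each family increase to $\varphi$. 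Thus it suffices to produce, for each such polynomial $P$, a birational map $f\in\bptwo$ with $\lambda(f)=\rho(P)$ --- equivalently, by Theorem A and $\Lambda_W=\Lambda_\mathbb{C}$, to find orbit data $(M,\bar n)$ with $M\in\mathcal{M}_2\cup\mathcal{M}'_2$ and $\rho(\chi_{M,\bar n})=\rho(P)$. Once this is in hand, Theorem \ref{inT:ddeg} automatically makes any realizing $f$ not conjugate to an automorphism (as $\rho(P)$ is Pisot, not Salem), so nothing extra is needed there; this also explains the role of $\varphi$, since immediately above it one meets the Pisot numbers shown un-realizable by Blanc--Cantat.

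For the construction I would use Diller's quadratic maps. Fix a cuspidal cubic $C\subset\mathbf{P}^2(\mathbb{C})$, whose automorphisms act on its smooth locus through a multiplier $\delta\in\mathbb{C}^*$. Given orbit data consisting of orbit lengths $\bar n=(n_1,n_2,n_3)\in\mathbb{N}^3$ and a permutation $\sigma\in S_3$, Diller \cite{Diller:2011} shows that, for $\delta$ on the finite set of values where the realizability equation holds, there is a quadratic birational map $f$ properly fixing $C$ and realizing this data: the three exceptional lines meet the three indeterminacy points after $n_1,n_2,n_3$ steps according to $\sigma$. Blowing up the finitely many orbit points yields a rational surface $X$ on which $f$ becomes algebraically stable, so that $\lambda(f)=\rho\bigl(F^{*}\mid\pic(X)\bigr)$ equals the largest root of the characteristic polynomial, which is exactly the $\chi_{M,\bar n}$ of (\ref{E:charpoly}) for the datum $M\in\mathcal{M}_2\cup\mathcal{M}'_2$ recording $\sigma$. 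If the orbit data is chosen so that $\rho(\chi_{M,\bar n})$ is the target Pisot number $\tau$, then $\tau=\lambda(f)$ and Theorem \ref{inT:ddeg} shows $f$ is not birationally an automorphism --- consistently with $\chi_{M,\bar n}$ being non-reciprocal for these data.

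The remaining work, and where I expect the main difficulty to lie, is the arithmetic identification: one must compute $\chi_{M,\bar n}$ from (\ref{E:charpoly}) for the relevant degree-$2$ data and show that, after removing cyclotomic factors, the noncyclotomic factor runs over exactly the Dufresnoy--Pisot polynomials as the orbit lengths vary --- with no gaps, and with nothing whose dominant root exceeds $\varphi$ slipping in from degree $2$. I expect this to reduce to a base computation --- identifying the smallest quadratic orbit data with $\chi_{M,\bar n}=(t^{2}-t-1)\cdot(\text{cyclotomic})$, hence $\lambda=\varphi$ --- together with a one-step recursion saying that incrementing a single orbit length by $1$ multiplies the noncyclotomic factor by $t$ and subtracts a fixed correction term; this should dovetail with the recursion of Theorem \ref{T:allbase} and the monotonicity of Theorem B, which force the dynamical degrees to increase strictly toward $\varphi$ without overshooting. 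Alongside this one must verify, for the whole infinite list of orbit data in play and not merely for illustrative cases, that Diller's realizability hypotheses persist --- the $n_i+1$ orbit points on $C$ staying distinct, the multiplier $\delta$ remaining admissible, the map properly fixing $C$. Once these are settled, every Pisot number $\tau\le\varphi$ equals $\lambda(f)$ for an explicit quadratic $f\in\bptwo$, and Theorem \ref{inT:ddeg} certifies $f$ is not birationally an automorphism, proving Theorem C.
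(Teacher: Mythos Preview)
Your overall strategy matches the paper's --- degree-$2$ data, match $\chi_{M,\bar n}$ against the Dufresnoy--Pisot list, then realize via Diller's quadratic maps fixing a cubic --- but your realization paragraph describes the wrong construction. You have the three exceptional lines meeting indeterminacy after $n_1,n_2,n_3$ steps and then blow up all orbit points; that makes the lift an \emph{automorphism} of the blown-up surface, and by Theorem~\ref{inT:ddeg} its dynamical degree is then Salem, never Pisot. Finite orbit data $(M,\bar n)$ with $M\in\mathcal{M}_2$ and $\bar n\in\mathbb{N}^3$ cannot produce a Pisot number, so as written your argument realizes none of the target numbers.

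The Pisot numbers come from $M'\in\mathcal{M}'_2$, the \emph{proper} submatrices, which encode maps where only one or two of the three exceptional lines have orbits returning to $\mathcal{I}(f)$; the remaining ones escape forever. Concretely the paper imposes, for orbit data $(M'_3,(n_1,n_2))$, the system (\ref{E:f3}): two equality constraints (two orbits close up in $n_1,n_2$ steps) together with an infinite family of \emph{inequalities} forcing the third orbit never to close. Dropping one equation makes the system underdetermined in $(t_1,t_2,t_3,a)$, so the multiplier $a$ is not confined to a ``finite set of values'' as you wrote but can be chosen freely (e.g.\ an integer), after which the open inequality conditions hold generically. The resulting map is algebraically stable after blowing up only the two finite orbits, is not birationally an automorphism, and has $\lambda(f)=\rho(\chi_{M',(n_1,n_2)})$. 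You mention $\mathcal{M}'_2$ and non-reciprocal $\chi$ in passing, but your construction paragraph is for the Salem case; the ``one orbit escapes, enforced by open conditions'' mechanism is the missing idea.

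A smaller point: the arithmetic matching you flag as the main difficulty is in fact a direct computation --- the paper simply evaluates (\ref{E:charpoly}) for $(M'_3,(1,n))$, $(M'_3,(2,3))$, $(M'_2,(1,n))$, and $(M',(1))$ and reads off $\theta_1,\theta_s,\theta_2,\theta_3$ and $t^2-t-1$ (Lemmas \ref{L:pW1}--\ref{L:pW3}); no recursion or appeal to Theorem~B is needed.
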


\vspace{2ex}
 
 The Coxeter group $W_n, n\ge 4$, associated with $E_n$ diagram in Figure \ref{fig:coxetergraph} is the group with generators $s_i$'s and relations \[ (s_i s_j)^{m_{i,j}} = 1 \qquad \text{where}\ \ m_{i,j} \ =\   \left\{ \ \begin{aligned} & 1 \ \ \text{ if } i=j,\\ &  2 \ \ \text{ if there is no edge joining } s_i \text{ and } s_j, \\ & 3 \ \ \text{ if there is an edge joining } s_i \text{ and } s_j. \\ \end{aligned} \right. \]

 \begin{figure}[h]
\centering
\includegraphics{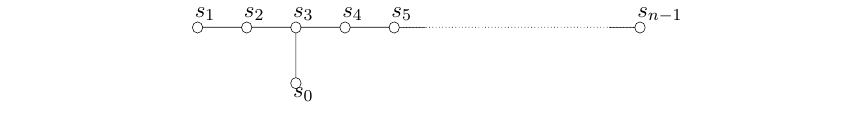}
\caption{Coxeter Graph $E_n$ \label{fig:coxetergraph} }
\end{figure}
For $\omega \in W_n$, we say $\omega$ is \textit{essential} if, for all $\rho \in W_n$, the conjugate $\rho \omega \rho^{-1}$ is not an element of a proper subgroup generated by $s_i$, where $ i \in I \subsetneq \{0, 1, \dots, n-1\}$. The Coxeter group $W_n$ has a faithful reflection representation on a vector space $V_n$ with the inner product $B_n(\alpha_i,\alpha_j) = -2 \cos (\pi/m_{ij})$ on the basis $\{\alpha_i\}$ dual to $\{s_i\}$. Let $\lambda(\omega)$ denote the leading eigenvalue of $\omega|_{V_n}$. McMullen \cite{McMullen:2002} showed that if $\omega \in W_n, n\ge 10$ is essential, then the spectral radius $\lambda(\omega)$ is always greater than or equal to $\lambda_{10}$. The Salem number $\lambda_{10} \approx 1.176$, known as Lehmer's number, has the minimal polynomial:   \[ L(t) = t^{10} +t^9-t^7-t^6-t^5-t^4-t^3+t+1. \]

Let us define $\Lambda_{S,W}$ as the set of all spectral radii of essential elements in $W_n, n\ge 10$: \[ \Lambda_{S,W} := \{ \lambda(\omega): \omega \text{ is an essential element in } W_n, n\ge 10 \}. \]  Since every element in $W_n, n\le 9$ has spectral radius equal to $1$, we have \[  \{ \lambda(\omega): \omega\in W_n, n\ge 4 \} = \{1\} \cup \Lambda_{S,W}.\]
We denote the closure of all spectral radii of $W_n, n\ge 4$ as the Weyl spectrum, $ \Lambda_W$: \[ \Lambda_W \ =\  \{1\} \cup \overline{\Lambda_{S,W}}.\]

 It is known \cite{Lakatos:2001, Lakatos:2010} that the spectral radius of $\omega \in W_n$ is either $1$, a reciprocal quadratic integer, or a Salem number. A reciprocal quadratic integer $q_n$ is an algebraic integer whose minimal polynomial is $1- n t + t^2$ for some integer $n$. By Blanc and Cantat \cite{blancdynamical}, every positive reciprocal quadratic integer is realized as a spectral radius of an element of $W_n$. The smallest positive reciprocal quadratic integer greater than $1$ is $q_3 \approx 2.618$. This quadratic integer, $q_3$, is realized as a spectral radius of an element $\omega_q\in W_{10}$ where 
\[  \omega_q =s_1   s_7   s_9   \kappa_{2,3,4}   \kappa_{2,5,6}   \kappa_{2,8,10}   \kappa_{3,4,5}   \kappa_{6,8,10}, \]
\[
  \kappa_{i,j,k} = \rho_{1,i}   \rho_{2,j}  \rho_{3,k}   s_0    \rho_{1,i}   \rho_{2,j}  \rho_{3,k}, \qquad\text{and} \qquad \rho_{i,j} = s_i s_{i+1} \cdots s_{j-1}. \]
On the other hand, not every Salem number is realized as a spectral radius of an element of $W_n$. However, McMullen \cite{McMullen:2002} showed that the smallest known Salem number, $\lambda_{10}$, is indeed realized as a spectral radius of the Coxeter element in $W_{10}$. 
Thus, $\lambda_{10}$ is the smallest number greater than $1$ in $\Lambda_W$.
 
 \vspace{1ex}
 As noted above, any limit of Salem numbers is either a Salem or Pisot number. Consequently, the Weyl spectrum consists of $1$, Salem numbers, and Pisot numbers. Let us denote by $\Lambda_{P,W}$ the set of limit points of $\Lambda_{S,W}$:
 \[ \Lambda_W = \{1\} \cup \Lambda_{S,W} \cup \Lambda_{P,W}. \]
 
 \vspace{1ex}
 On the other hand, the Weyl spectrum $\Lambda_W$ is realized as the set of dynamical degrees of birational maps on $\mathbf{P}^2(\mathbb{C})$. Consider a birational transformation $f=[f_1:f_2:f_3]$ on $\mathbf{P}^2(\mathbb{C})$, where $f_1, f_2$ and $f_3$ are polynomials of degree $d$ without a non-constant common factor. The degree of $f$ is given by the common degree $d$ of the coordinate polynomials, and the dynamical degree $\lambda(f)$ is defined by \[ \lambda(f) \ =\ \lim_{n \to \infty} (\deg f^n)^{1/n}. \]
 Because of possible common factors in the $n$-fold composition of $f$, we have $\deg (f^n) \le (\deg f)^n$, and thus, computing the dynamical degree of a given birational map is a challenging task. For a given birational map $f \in \bptwo$, Diller and Favre \cite{Diller-Favre:2001} showed that there is always a rational surface $X$, obtained by blowing up $\mathbf{P}^2(\mathbb{C})$ along a finite set of points, such that the induced map $F:X \to X$ satisfies \[ \lambda(F) = \lambda(f) = \rho(F_*) \] where $\rho$ denotes the spectral radius and $F_*$ is the induced linear map on $\pic(X)$. 
 
The dynamical degree of an automorphism on $\mathbf{P}^2(\mathbb{C})$ is equal to $1$. For a birational map with a dynamical degree $\lambda(f) > 1$, Diller and Favre \cite{Diller-Favre:2001} and Blanc and Cantat \cite{blancdynamical} provide the following classification:
 \begin{thm}\cite{Diller-Favre:2001, blancdynamical}\label{inT:ddeg} Suppose $f \in \bptwo$ is a birational transformation with a dynamical degree $\lambda(f) >1$. Then we have one of the following:
 \begin{itemize}
 \item $f$ is birationally equivalent to an automorphism of the rational surface, and the dynamical degree $\lambda(f)$ is either a Salem number or a reciprocal quadratic integer.
 \item $f$ is not birationally equivalent to an automorphism, and the dynamical degree $\lambda(f)$ is a Pisot number.   
 \end{itemize}
 \end{thm}
 
Let us denote by $\Lambda_\mathbb{C}$ the set of all dynamical degrees of birational maps on $\mathbf{P}^2(\mathbb{C})$. 
Let us also denote by $\Lambda_{S, \mathbb{C}} \subset \Lambda_\mathbb{C}$ the set of all dynamical degrees of birational maps equivalent to an automorphism, and by $\Lambda_{P, \mathbb{C}} \subset \Lambda_\mathbb{C}$ the set of all dynamical degrees of birational maps that are not equivalent to an automorphism. Then, by Theorem \ref{inT:ddeg}, the set $\Lambda_{S, \mathbb{C}}$ consists of $1$, reciprocal quadratic integers, and Salem numbers, and the set $\Lambda_{P, \mathbb{C}}$ consists of Pisot numbers. 
By Nagata \cite{Nagata, Nagata2}, if $F$ is an automorphism on a rational surface $X$ over $\mathbb{C}$ with infinite order, then the induced action $F_*$ on $\text{Pic}(X)$ is an element of $W_n$, where $n+1$ is the Picard rank of $X$. Thus we have $\Lambda_{S, \mathbb{C}} \subset \Lambda_{S, W}$. McMullen \cite{McMullen:2007}, Uehara \cite{Uehara:2010}, and Kim \cite{Kim:2023} show that, in fact,
\[ \Lambda_{S, \mathbb{C}} = \Lambda_{S, W}. \] 
Also, Blanc and Cantat \cite{blancdynamical} show that the dynamical spectrum $\Lambda_\mathbb{C}$ is well-ordered and closed. It follows that \[  \Lambda_W = \Lambda_\mathbb{C}.\] To simplify our notations, let us use $\Lambda_{S}$ for both $\Lambda_{S, W}$ and $\Lambda_{S, \mathbb{C}}$. 

\begin{rem}
Since $\Lambda_W = \Lambda_\mathbb{C}$, the closure of Salem numbers in the Weyl spectrum consists of both Pisot and Salem numbers. Thus, in principle, there could be a Salem number in $\Lambda_{P,W} \setminus \Lambda_{S,W}$. However, by combining the identities $\Lambda_{S, \mathbb{C}} = \Lambda_{S, W}$ and $\Lambda_W = \Lambda_\mathbb{C}$ with the result of Blanc and Cantat \cite{blancdynamical}, we conclude that if a Salem number $\lambda$ is in $\Lambda_{P,W}$, then $\lambda$ must also lie in $\Lambda_S$. Consequently, there is no Salem number in $\Lambda_{P,W} \setminus \Lambda_{S,W}$.
\end{rem}

This article is organized as follows: In Section \ref{S:birational}, we discuss basic facts about birational maps on $\mathbf{P}^2(\mathbb{C})$ and define orbit data for these maps. In Section \ref{S:coxeter}, we explore the properties of the Coxeter group and give a formula for the characteristic polynomial of any element within the group. The proof of Theorem A is in Section \ref{S:sp}, and the proof of Theorem B is in Section \ref{S:spec}, and Section \ref{S:pisot} lists orbit data corresponding to Pisot numbers smaller than or equal to the golden ratio, including the proof of Theorem C. Section \ref{ApendA} addresses accumulation points of Pisot numbers in the Weyl spectrum.

\section{Birational maps on $\mathbf{P}^2(\mathbb{C})$}\label{S:birational}
Suppose $X$ is a rational surface over $\mathbb{C}$. Let $f: X \dasharrow X$ be a birational transformation on $X$. 
Then, there is a well-defined linear map $f_* : \pic(X) \to \pic(X)$, and \textit{the dynamical degree} $\lambda(f)$ of $f$ is defined by 
\[ \lambda(f) = \lim_{n \to \infty} \rho((f^n)_*) ^{1/n}, \] where $\rho$ denotes the spectral radius. 

\vspace{1ex}
We say a birational transformation $f$ on a rational surface $X$ is \textit{algebraically stable} if 
\begin{equation}\label{E:Astable} (f^n)_* = (f_*)^n \qquad \text{for all } n \in \mathbb{Z}.  \end{equation} Thus, if a birational transformation $f$ is algebraically stable, then the dynamical degree $ \lambda(f)$ of $f$ is given by the spectral radius of the induced linear action $f_*$ on $\pic(X)$.

\subsection{$ \text{Bir}(\mathbf{P}^2(\mathbb{C}))$} 
Let $f$ be a birational map on $\mathbf{P}^2(\mathbb{C})$. There are three homogeneous polynomials $f_1,f_2$ and $f_3$ of the same degree $d$ and without a non-constant common factor such that 
\[ f[x_1:x_2:x_3] \ = \ [ f_1(x_1,x_2,x_3):f_2(x_1,x_2,x_3):f_3(x_1,x_2,x_3)]. \]
In this case, \textit{the degree of $f$} is given by the common degree of $f_i$'s: \[ \deg (f) = d.\] 

Because of possible common factors in the iterations of $f$,  the degree of $f^2= f \circ f$ is not necessarily equal to $\deg(f)^2$. For example, for the \textit{Cremona involution}  \[ \sigma[x_1:x_2:x_3] = [x_2 x_3:x_1x_3:x_1 x_2] ,\]
we have $\deg(\sigma) = 2$ and $\deg(\sigma^2) = 1$. In fact, we have \[ \deg (\sigma^n) \lneq (\deg \sigma)^n\ \ \ \text{ for all } n \ge 2.\]

Since $\pic(\mathbf{P}^2(\mathbb{C}))$ is generated by a generic line, the dynamical degree $\lambda(f)$ is also given by \[ \lambda(f) = \lim_{n \to \infty} (\deg f^n)^{1/n}. \]

Determining $\deg f^n$ for all $n$ is very challenging due to possible cancellations in the coordinate functions. However, Diller and Favre \cite{Diller-Favre:2001} showed that one can always construct a rational surface $X$ by blowing up finitely many (possibly infinitely near) points on $\mathbf{P}^2(\mathbb{C})$ such that the induced map $F$ on $X$ is algebraically stable. 
Since the dynamical degree is invariant under birational conjugacy, we have $\lambda(f) = \rho(F_*)$.

\subsection{$n$-fold Composition}
For any curve $V \subset \mathbf{P}^2(\mathbb{C})$, let us denote $f(V) = \overline{f(V- \mathcal{I}(f))}$ as the strict transform of $V$ under $f$.
Let us denote $f^n(V)$ as the image of $V$ under the $n$-fold composition of $f$. Since the indeterminacy locus $\mathcal{I}(f^n)$ consists of a finite set of points, $f^n(V)$ is well-defined for any curve $V \subset \mathbf{P}^2(\mathbb{C})$. However, it may not be possible to iterate $V$ under $f$ $n$-times; that is, it is possible for $f^j(V)$ to lie in $\mathcal{I}(f)$ for some $j=1, \dots, n-1$. 

\vspace{1ex}

For example, consider the Cremona involution $\sigma$ where $\sigma^2=Id$:
\[ \sigma^2( \{ x_1 =0 \} ) = \{x_1 =0\}, \qquad \text{but} \qquad \sigma( \sigma(\{x_1=0\})) = \sigma([1:0:0]) \text{ is not defined} .\]

\subsection{Orbit Data}
Suppose $f \in \bptwo$ is given by homogeneous polynomials $f_1,f_2$ and $f_3$ of the same degree $d$ without a non-constant common factor.
The exceptional locus $\mathcal{E}(f)$ is given by the zero set of the determinant of the Jacobian matrix $J_f$. 

Suppose $J_f = \prod_{i=1}^m \phi_i$, a product of (not necessarily distinct) irreducible polynomials, and let $c_i = \deg \phi_i$. Also let $C_i = Z(\phi_i) \subset \mathbf{P}^2(\mathbb{C})$ be the zero set of $\phi_i$. Then $C_1, \dots, C_m$ are the degree $c_i$ irreducible components of $\mathcal{E}(f)$, counted with multiplicity of the defining polynomials in $\det J_f$: \[ \mathcal{E}(f) = \{ C_1, \dots, C_m \}, \quad \text{with} \ \ \deg C_i = c_i . \] 

Since the degree of $f$ is equal to $d$, the determinant $\det J_f$ is given by a polynomial of degree $3(d-1)$. By counting the degree, we see that $\sum c_i = 3 (d-1)$. 

\vspace{1ex}
In the example \ref{Eg:tau}, we see that a birational map $\tau[x_1:x_2:x_3] = [x_1^2: x_1 x_2: x_2^2-x_1 x_3]$ satisfies $\det J_\tau=-2 x_1^3$. Thus $\mathcal{E}(\tau) = \{ \{ x_1=0\}, \{x_1=0\},\{x_1=0\} \}$ consists of three identical lines.

Also, there is an indeterminacy locus: \[ \mathcal{I}(f) = \{ z\in \mathbf{P}^2(\mathbb{C}): f_1(z)=f_2(z) = f_3(z) =0 \}. \]

\vspace{1ex}

\subsubsection{Orbits of Exceptional curves} Let us define a subset $\mathcal{E}^*(f) \subset \mathcal{E}(f)$ by
\[ \mathcal{E}^*(f) = \{ C_i \in \mathcal{E}(f) : \text{dim} f^{n+1}(C_i) =1 \ \ \text{ for some } n \ge 1\}. \] It follows that if $C_i \in \mathcal{E}^*(f)$ then  $f^j(C_i) \in \mathcal{I}(f)$ for some $j \ge 1$. 

\vspace{1ex}

With renaming of the $C_i$'s if necessary, we may assume that $\mathcal{E}^*(f) = \{ C_1, \dots, C_k \}$ for some $k\le m$. For $i=1, \dots, k$, let $n_i$ be the smallest positive integer $n$ such that $\text{dim} f^{n+1}(C_i) =1$:
\[ n_i = \min \{ n\ge 1: \text{dim} f^{n+1}(C_i) =1\}.\]
Also let $\mathcal{O}_i$ be the set defined by:
\[ \mathcal{O}_i = \{ p_{i,j}=f^j (C_i), 1 \le j \le n_i \}. \]
Let us call the set $\mathcal{O}_i$ \textit{the orbit} of $C_i$ under $f$. Note that this is not the typical definition of orbit, as $f^j(C_i)$ could lie in the indeterminacy locus of $f$. If $\mathcal{O}_i$ is the orbit of $C_i \in \mathcal{E}^*(f)$, then we have: \[\{ p_{i,1}, 1 \le i \le k \} \subset \mathcal{I}(f^{-1})\quad \text{and} \quad \{ p_{i,n_i}, 1\le i \le k \} \subset \mathcal{I}(f). \]

\vspace{1ex}
Constructing an algebraically stable modification is a challenging task. However, condition (\ref{E:Astable}) provides a criterion for determining which exceptional curves require modification.

\begin{prop}\label{P:Astable}
Suppose $f$ is a birational map with an exceptional set $\mathcal{E}(f) = { C_i \mid i = 1, \ldots, m }$ consisting of irreducible (not necessarily distinct) exceptional curves. An algebraically stable modification $F:X \dasharrow X$ is obtained by blowing up the orbit of $C_i$ for $i \in I \subset \{1, 2, \dots, m\}$ if and only if, for each $i \in I$, there exists a positive integer $n_i$ such that $\dim f^{n_i + 1}(C_i) = 1$, and for each $i \notin I$, we have $\dim f^j(C_i) = 0$ for all $j \geq 1$.
\end{prop}

\begin{proof}
Suppose an algebraically stable modification $F: X \dasharrow X$ is obtained by blowing up the orbit of $C_i$ for $i \in I \subset {1, 2, \dots, m}$. Then, for each $i \in I$, the proper transform $F_*[C_i]$ is an element in $\pic(X)$ with self-intersection $-1$. Hence, for all $n \geq 1$, $(F_*)^n[C_i]$ cannot vanish. Since $X$ is obtained by blowing up finitely many points, there exists a positive integer $n_i$ such that $(F_*)^{n_i + 1}[C_i] \cdot e_0 \neq 0$, where $e_0$ is the class of a generic line. If $i \notin I$, then $F_*[C_i] = 0$, and thus $(F_*)^n[C_i] = 0$ for all $n \geq 1$.

Conversely, suppose $C_i \in \mathcal{E}(f)$ satisfies $\dim f^{n_i + 1}(C_i) = 1$ for some positive integer $n_i$, and assume that the orbit of $C_i$ has not been blown up in the construction of $X$. Then $F_*[C_i] = 0$. However, the condition $\dim f^{n_i + 1}(C_i) = 1$ implies that $(F_*)^{n_i + 1}[C_i] \cdot e_0 \neq 0$. This contradicts the assumption that $(F_*)^{n_i + 1} = (F^{n_i + 1})_*$, completing the proof.
\end{proof}

\vspace{1ex}

\subsubsection{Algebraically stable modification}\label{SSS:as} According to Diller and Favre \cite{Diller-Favre:2001}, for a birational map $f$ on $\ppc$, there is a modification $X$ such that the lift of $f$ is algebraically stable. Let us summarize the construction of the modification in our setting:

\vspace{1ex}
Without loss of generality, we may assume $|\mathcal{O}_i| \le |\mathcal{O}_{i+1}|$ for all $i=1, \dots, k$.
Let us define a sequence of blowups 
\[ \pi: X= X_k \to X_{k-1} \to \cdots \to X_1 \to X_0= \mathbf{P}^2(\mathbb{C}),\] where:
\begin{enumerate}\addtolength{\itemsep}{1ex}
\item Let $\tilde{D_1}=f^{n_1+1} (C_1) \subset \ppc$ and let $\tilde{p}_{1, j} = p_{1,j}$ for $j=1, \dots, n_1$.
 \item $\pi_1: X_1 \to X_0 = \mathbf{P}^2(\mathbb{C})$ is a blowup along the finite set of points in $\mathcal{O}_1$. 
\item Let $f_1: X_1 \dasharrow X_1$ be the lift of $f$. Also let $\tilde{C_2}\subset X_1$ be the strict transform of $C_2$ and let $\tilde{D_2} = f_1^{n_2+1} (\tilde C_2)$. Then $\tilde{D_2}$ is a curve in $X_1$ and the orbit of $\tilde{C_2}$ under $f_1$ is given by  
\[ \tilde{ \mathcal{O}_2} = \{ \tilde{p}_{2, j} = f_1^j{\tilde{C}_2}, j=1 \le j \le n_2\} \subset X_1.\]
Notice that the orbit $\tilde{ \mathcal{O}_2}$ is the strict transform of $\mathcal{O}_2 =\{ p_{2,j}, 1 \le j \le n_2\} \subset X_1$.
 \item For $i=2, \dots, k-1$, $\pi_i: X_i \to X_{i-1}$ is a blowup along the finite set of points in $\tilde{\mathcal{O}_i}$.  
 Then, let $f_i: X_i \dasharrow X_i$ be the lift of $f_{i-1}$. Let $\tilde{C}_{i+1}\subset X_i$ be the strict transform of $C_{i+1}$ and let $\tilde{D}_{i+1} = f_i^{n_i+1} (\tilde C_{i+1})$, and let $\tilde{\mathcal{O}}_{i+1}$ be the orbit of $\tilde{C}_{i+1}$ under $f_i$:
 \[ \tilde{ \mathcal{O}}_{i+1} = \{ \tilde{p}_{i+1, j} = f_i^j{\tilde{C}_{i+1}}, j=1 \le j \le n_{i+1}\} \subset X_i.\]
\item  $\pi_k: X_k \to X_{k-1}$ is a blowup along the finite set of points in $\tilde{\mathcal{O}}_k$.
\item Let $\pi= \pi_1 \circ \pi_{2} \circ \cdots \circ \pi_k$. Then the lift $F:X \dasharrow X$ of $f$ is algebraically stable. 
\end{enumerate}

We refer to the proof of Theorem 0.1 in \cite{Diller-Favre:2001} for detailed information about this construction and to \cite{Bedford-Kim:2010, Bedford-Kim:rotation} for more examples. If the orbits of exceptional curves are not disjoint, the construction becomes more complex. This complexity is illustrated in Example \ref{Eg:tau}.

\vspace{1ex}
\subsubsection{Induced Action on the Picard group} With the above construction, we have \[ \pic(X) = \langle e_0, e_{i,j}, 1 \le j\le n_i, 1 \le i \le k \rangle,\] where $e_0$ is the class of a generic line and $e_{i,j}$ is the class of the total transform of $\tilde{p}_{i,j}$. Recall that the point $\tilde{p}_{i,j} \in X_{i-1}$ is blown up by $\pi_i$ for the first time. Thus, $e_{i,j}$ is the class of $\pi_k^*\circ \pi_{k-1}^* \circ \cdots \circ \pi_i^* (\tilde{p}_{i,j})$. This set of generators provides a geometric basis for $\pic(X)$: 
  \[ e_0 \cdot e_0 = 1, \quad e_{i,j} \cdot e_{i,j} = -1 \ \ \text{for } 1 \le j\le n_i, 1 \le i \le k, \qquad \text{ and} \qquad   e_s \cdot e_t =0 \ \ \text{if } s \ne t\]
 It is convenient to work with a geometric basis in the following sense:
 
 Suppose $\hat C_i, \hat D_i \subset X$ are the total transform of $\tilde C_i, \tilde D_i $ (respectively) in $X_{i-1}$ for $i=1, \dots, k$. Then, the induced action of $F_*$ on $\pic(X)$ satisfies:
 \[ F_* : [ \hat C_i] \mapsto e_{i,1} \mapsto e_{i,2}\mapsto \cdots \mapsto e_{i,n_i} \mapsto [\hat D_i]\]

\vspace{1ex}
The class $[\hat D_i] \in \pic(X)$ is given by a linear combination of $e_0$ and $e_{i,1}$'s
\begin{equation}\label{E:fstar1} [\hat D_i] = F_*(e_{i,n_i}) = d_i e_0 - \sum_j m_{j,i} \,e_{j,1}, \qquad \text{where } \ \ d_i >0 \text{, and } m_{j,i} \ge 0.\end{equation} Since a generic line in $\mathbf{P}^2(\mathbb{C})$ intersects $C_i$ at $c_i$ points (counted with multiplicity) with $c_i>0$, we have 
\begin{equation}\label{E:fstar2} F_*(e_0) = d e_0 - \sum_i c_i \, e_{i,1}\end{equation} and\begin{equation}\label{E:fstar3} F_*(e_{i,j}) = e_{i,j+1}\qquad \text{for } 1 \le i \le k, 1 \le j \le n_i-1.\end{equation}
Using above equations (\ref{E:fstar1}) --- (\ref{E:fstar3}), we obtain a matrix representation of $F_*$ on $\pic(X)$.

\subsubsection{Orbit Data} Let us use the ordered basis $\langle e_0, e_{1,1}, e_{1,2}, \dots,e_{1, n_1}, e_{2,1}, \dots, e_{k,1}, \dots, e_{k,n_k} \rangle$. 
Except for the columns corresponding $e_0$ and $e_{i,n_i}$ for $1 \le i \le k$, all other columns of the matrix representation of $F_*$ consist only of zeros, except for a $1$ in the sub-diagonal entry. 

Let a $(k+1) \times (k+1)$ matrix $M$ be a sub-matrix of $F_*$ with columns corresponding to $e_0, e_{i, n_i}$ and rows corresponding to $e_0, e_{i,1}$ for $1 \le i \le k$. Let $\bar{n} = (n_1, n_2, \dots, n_k)$ be a $k$-tuple of strictly positive integers such that $n_i = |\mathcal{O}_i|$. It is clear that we can reconstruct $F_*$ using $M$ and $\bar n$. 
\begin{defn}
With the notations above, we call a $(k+1)\times (k+1)$ matrix $M$ the \textit{orbit permutation} and a $k$-tuple $\bar n$ of positive integers the \textit{orbit lengths}. We call the pair $(M, \bar n)$ the \textit{Orbit data} $\mathcal{O}$ of $f$: \[ \mathcal{O}(f) \ = \ (M, \bar n).\]
\end{defn}

A matrix $M$ in the orbit data of a birational map $f \in \bptwo$ is called the orbit permutation because it describes the correspondence between the exceptional curves and the points of indeterminacy. Note that the orbit data is uniquely determined if the order of exceptional curves is given. 

\begin{prop}\label{P:nozeros}
Suppose a birational map $f \in \bptwo$ has the orbit data $\mathcal{O}(f) = (M, \bar n)$. Then the first column and the first row of $M$ consist of non-zero integers. 
\end{prop}

\begin{proof}
This follows directly from equations (\ref{E:fstar1}) - (\ref{E:fstar2}).
\end{proof}

\begin{thm}\label{T:bOrbitD}
The dynamical degree of a birational map $f \in \text{Bir}(\mathbf{P}^2(\mathbb{C}))$ is determined by its orbit data $\mathcal{O}(f)$. 
\end{thm}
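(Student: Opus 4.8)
The plan is to show directly that $\lambda(f)$ can be read off from the pair $(M,\bar n)$ through the explicit matrix $F_*$. Recall from the construction preceding the definition that, after blowing up the orbits $\mathcal{O}_1,\dots,\mathcal{O}_k$ of the curves in $\mathcal{E}^*(f)$, the lift $F\colon X \tto X$ is algebraically stable by Diller--Favre, so that $\lambda(f) = \lambda(F) = \rho(F_*)$, with $F_*$ acting on $\pic(X) = \langle e_0, e_{i,j}\rangle$. Hence it suffices to prove that the matrix of $F_*$ in the ordered basis $\langle e_0, e_{1,1},\dots,e_{k,n_k}\rangle$ is completely determined by $(M,\bar n)$, since then so is its spectral radius.

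First I would observe that equations (\ref{E:fstar1})--(\ref{E:fstar3}) already exhibit every column of $F_*$. The columns indexed by $e_{i,j}$ with $1\le j\le n_i-1$ are pure shifts, a single $1$ in the $e_{i,j+1}$ row, by (\ref{E:fstar3}); the column indexed by $e_0$ is given by (\ref{E:fstar2}); and the column indexed by $e_{i,n_i}$ is given by (\ref{E:fstar1}). The shift columns carry no data beyond the block sizes $n_i$, which are exactly the entries of $\bar n$, while the remaining columns, namely those for $e_0$ and the $e_{i,n_i}$ together with the corresponding rows, are by definition the principal submatrix $M$. Conversely, given $(M,\bar n)$ one writes down the $(1+\sum n_i)\times(1+\sum n_i)$ matrix by inserting, between the $e_0$-column and the columns recorded in $M$, the appropriate number $n_i-1$ of sub-diagonal shift columns for each $i$. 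This reconstructs $F_*$, hence $\rho(F_*)$, hence $\lambda(f)$, from $(M,\bar n)$ alone.

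Second, to make the dependence fully explicit and to prepare for the later statements invoking $\chi_{M,\bar n}$, I would compute $\det(t\,\mathrm{Id} - F_*)$ by cofactor expansion along the shift columns. Each block of $n_i-1$ consecutive shift columns contributes, in the standard companion-type way, a factor polynomial in $t$ whose exponents are governed by $n_i$, and collecting these reduces the $(1+\sum n_i)$-dimensional determinant to a $(k+1)\times(k+1)$ determinant whose entries are monomials in $t$ times the entries of $M$. This yields the polynomial $\chi_{M,\bar n}(t)$ of (\ref{E:charpoly}), whose largest real root is $\lambda(f)$; being built solely from $M$ and $\bar n$, the theorem follows. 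I would also note that reordering the curves $C_1,\dots,C_k$ conjugates $F_*$ by the corresponding permutation of blocks and therefore leaves $\rho(F_*)$ unchanged, so the ordering ambiguity in the definition of orbit data is harmless.

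The step I expect to be the main obstacle is the bookkeeping in the cofactor expansion: one must verify that the shift blocks collapse to clean monomial factors and that no cross terms between distinct orbit blocks survive, so that the final determinant genuinely has size $k+1$ with the claimed entries. This is precisely where the algebraic stability of $F$ does the real work, since it guarantees the block-and-shift structure of (\ref{E:fstar1})--(\ref{E:fstar3}) with no extraneous nonzero entries; everything else is a direct translation between $F_*$ and $(M,\bar n)$.
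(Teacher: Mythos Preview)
Your proposal is correct and follows the same approach as the paper: the orbit data $(M,\bar n)$ reconstructs the full matrix of $F_*$ via (\ref{E:fstar1})--(\ref{E:fstar3}), and then Diller--Favre gives $\lambda(f)=\rho(F_*)$. Your second step, the cofactor expansion yielding $\chi_{M,\bar n}$, is not needed for this theorem and is in fact the content of the later Lemma~\ref{L:charpoly} and Theorem~\ref{T:charpoly}; here the mere reconstructibility of $F_*$ from $(M,\bar n)$ already suffices.
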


\begin{proof} The orbit data $\mathcal{O}(f)$ gives $F_*:\pic(X) \to \pic(X)$ for an algebraically stable modification $F$. By Diller and Favre\cite{Diller-Favre:2001}, we have $\lambda(f) = $ the spectral radius of $F_*$. 
\end{proof}

\begin{eg}\label{Eg:crem}
Suppose $\sigma$ is the Cremona involution \[ \sigma[x_1:x_2:x_3] = [x_2 x_3:x_1x_3:x_1 x_2] .\]
Then we have \[ \mathcal{O}(\sigma) = \left (M=\begin{bmatrix} 2&1&1&1\\-1&0&-1&-1\\-1&-1&0&-1\\-1&-1&-1&0 \end{bmatrix}, \bar n=(1,1,1)\right) \]
\end{eg}

\begin{eg}\label{eg:bp2} Let $L \in \text{Aut}(\mathbf{P}^2(\mathbb{C}))$ and let $ f=L \circ \sigma \in \bptwo$. Then we have 
\[ \mathcal{E}(f) = \{ E_1=\{ x_1=0\}, E_2=\{ x_2=0\}, E_3=\{ x_3=0\} \} \] and \[ \mathcal{I}(f) = \{ p_1=[1:0:0],p_2=[0:1:0],p_3=[0:0:1] \} \] 
Now suppose that there is a permutation $\rho \in S_3$, $n_1,n_2,n_3 \ge 1$ such that
\[ f^{n_i+1}(E_i) = p_ {\rho(i)}, \qquad f^j (E_i) \not\in \mathcal{I}(f) \text{ for all } 1 \le j \le n_i.\]
Then we have $\mathcal{O}(f) = (M_\rho = M \cdot m(\rho), (n_1,n_2,n_3))$ where $M$ is the matrix given in the previous example and $m(\rho)$ is the matrix representation corresponding to $\rho \in S_3$. For example, if $\rho=(1,2)$ is a transposition of $1$ and $2$, then 
\[ m((1,2)) = \begin{bmatrix} 1&0&0&0\\ 0&0&1&0\\ 0&1&0&0\\0&0&0&1\end{bmatrix}, \quad \text{and} \quad M \cdot m((1,2)) = \begin{bmatrix} 2&1&1&1\\-1&-1&0&-1\\-1&0&-1&-1\\-1&-1&-1&0 \end{bmatrix}. \]
\end{eg}

\begin{eg}\label{Eg:tau}
Consider the involution $\tau$ defiend by \[ \tau[x_1:x_2:x_3] = [x_1^2: x_1 x_2: x_2^2-x_1 x_3]. \]
Since $\det J_\tau = -2 x_1^3$, we have \[  \mathcal{E}(\tau) =   \{ E_1=\{ x_1=0\}, E_2=\{ x_1=0\}, E_3=\{ x_1=0\} \},\qquad \mathcal{I}(\tau) = \{p= [0:0:1] \}. \]
Notice that $\tau^2[x_1:x_2:x_3] = [x_1^4:x_1^3 x_2: x_1^3 x_3] = [x_1:x_2:x_3]$, and thus $\tau^2 = Id$. It follows that \[\dim \tau (E_i) = 0, \qquad \text{and} \qquad \dim \tau^2(E_i) = 1 \qquad \forall i=1,2,3. \] Thus we have $n_1=n_2=n_3 = 1$. 
The orbits of exceptional curves are $\mathcal{O}_i = \{  p=[0:0:1]\} $ for all $i=1,2,3$. Let $E= \{ x_1=0\}$ be the common set for exceptional curves. 
Let us define a modification $\pi:X= X_3\xrightarrow{\pi_3} X_2\xrightarrow{\pi_2} X_1\xrightarrow{\pi_1}X_0 = \ppc$ as follows:
\begin{enumerate}\addtolength{\itemsep}{1ex}
\item $\pi_1: X_1 \to X_0= \ppc$ is a blowup of $\ppc$ at a point $p_1=[0:0:1] \in \mathcal{O}_1$. Let $\tilde E_1 = E$, then we have $p_1\in \tilde E$. 
\item Let $\mathcal{F}_1$ be the exceptional curve over $p_1$.
\item Let $f_1: X_1 \dasharrow X_1$ be the lift of $f$ on $X_1$.
\item Let $\tilde{E_2}\subset X_1$ be the strict transform of $E$. 
\item Let $p_2 = f_1(\tilde{E_2}) \in \mathcal{F}_1$. In fact, we have $p_2 = \mathcal{F}_1 \cap \tilde{E_2}$ and $\pi_1(p_2) = p$. 
\item $\pi_2: X_2 \to X_1$ is a blowup at a point $p_2$, and let $\mathcal{F}_2$ be the exceptional curve over $p_2$.
\item Let $f_2: X_2 \dasharrow X_2$ be the lift of $f$ on $X_2$.
\item Let $\tilde{E_3}\subset X_2$ be the strict transform of $E$. 
\item Let $p_3 = f_2(\tilde{E_3}) \in \mathcal{F}_2$. In fact, we have $p_3 \in \mathcal{F}_2 \setminus E_3$. 
\item Let  $\pi_3: X_3 \to X_2$ is a blowup at a point $p_3$, and let $\mathcal{F}_3$ be the exceptional curve over $p_3$.
\item Finally,  let $\hat{E_i} \subset X$ be the total transform of $\tilde {E_i}$ for $i=1,2,3$: \[ \hat E_3 = \pi_3^* \tilde E_3,\ \ \hat E_2 = (\pi_2 \circ \pi_3)^* \tilde E_2,\ \ \hat E_1 = (\pi_1\circ \pi_2 \circ \pi_3)^* \tilde E_1.\]
\end{enumerate}
It follows that the total transformation $\mathcal{E}_i$ of $p_i$ is given by $\cup_{j=i}^3 \mathcal{F}_j$:
\[ \mathcal{E}_1 =  \mathcal{F}_1+  \mathcal{F}_2 + \mathcal{F}_3, \qquad \mathcal{E}_2 =  \mathcal{F}_2 + \mathcal{F}_3, \qquad \text{and} \qquad \mathcal{E}_3 =   \mathcal{F}_3,\]
 and 
\[ \hat{E}_1= \hat{E}_3 +  \mathcal{F}_1+ \mathcal{F}_2,\qquad \text{and} \qquad  \hat{E}_2 =  \hat{E}_3 + \mathcal{F}_2.\]
Then, the lift $F$ of $f$ on $X$ is an automorphism such that 
\[ F: \hat{E_i} \mapsto \mathcal{E}_i \qquad \text{for all } i=1,2,3 .\]
The Picard group $Pic(X)$ is generated by $e_0,e_1,e_2,e_3$ where $e_0$ is the class of generic line and $e_i$ is the class of $\mathcal{E}_i$ for $i=1,2,3$. With the ordered basis $\langle e_0,e_1,e_2,e_3\rangle$, we have 
\[\mathcal{O}(\tau ) = (M = M , (1,1,1))\] where $M$ is the matrix given in the example \ref{Eg:crem}. Notice that the orbit data for $\tau$ is the same as that for the Cremona involution $\sigma$ in the example \ref{Eg:crem}.
\end{eg}

\begin{rem}
As demonstrated in the example above, computing orbit data for a given birational map can be computationally expensive. We will address this challenge by utilizing orbit data from the Weyl group, as defined in Section \ref{S:coxeter}.
\end{rem}

\subsection{Dynamical Spectrum}
Let us denote by $\Lambda_\mathbb{C} $ the Dynamical spectrum, the set of all dynamical degrees of birational maps on $\mathbf{P}^2(\mathbb{C})$:
 \[\Lambda_\mathbb{C} = \{ \lambda(f) : f \in \text{Bir}(\mathbf{P}^2(\mathbb{C})) \}. \] 
 There are only three possibilities for the dynamical degrees in $\Lambda_\mathbb{C}$. 
 By Diller-Favre \cite{Diller-Favre:2001} and Blanc-Cantat \cite{blancdynamical}, we know: 
\begin{thm}\cite{Diller-Favre:2001,blancdynamical}\label{T:df}
Suppose $f$ is a birational map on a surface $\mathbf{P}^2(\mathbb{C})$. If $\lambda(f) \ne 1$, then 
\begin{itemize}
\item either the dynamical degree is a Salem number or a reciprocal quadratic integer and $f$ is birationally equivalent to an automorphism:  \[ \Lambda_S: =\{ \lambda(f) :  f \in \text{Bir}(\mathbf{P}^2(\mathbb{C}))\text{ and } f \ \text{is birationally equivalent to an automorphism} \} \]
\item or the dynamical degree is a Pisot number, and $f$ is not birationally equivalent to an automorphism: \[ \Lambda_P: =\{ \lambda(f) :  f \in \text{Bir}(\mathbf{P}^2(\mathbb{C}))\text{ and } f \ \text{is NOT birationally equivalent to an automorphism}. \} \]
\end{itemize}
\end{thm}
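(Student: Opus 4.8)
The plan is to prove the two alternatives separately, using throughout the algebraically stable model of Diller--Favre: replace $\mathbf{P}^2(\mathbb{C})$ by a rational surface $X$ on which $f$ lifts to an algebraically stable birational map $F$, so that $\lambda(f)=\rho(F_*)$ and, by the Hodge index theorem, $F_*$ acts on the lattice $N:=\pic(X)$ whose intersection form has signature $(1,\operatorname{rk} N-1)$; assume $\lambda:=\lambda(f)>1$. First I would extract the leading eigendata: since $F$ is algebraically stable, the limit $\theta^{+}:=\lim_{n\to\infty}\lambda^{-n}(F^{*})^{n}h$ of an ample class $h$ exists in $N\otimes\mathbb{R}$, is nonzero and nef, and satisfies $F^{*}\theta^{+}=\lambda\theta^{+}$; applying the same construction to $F^{-1}$ (after a further blow-up, if needed, so that $F^{-1}$ too is algebraically stable; neither operation changes $\lambda$) produces a nef class $\theta^{-}$ with $(F^{-1})^{*}\theta^{-}=\lambda\theta^{-}$. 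Since $\theta^{\pm}$ are nef, $\theta^{+}\cdot\theta^{-}\ge0$, and the whole dichotomy is governed by whether this number is positive or zero: by the criterion of Diller--Favre, $\theta^{+}\cdot\theta^{-}>0$ exactly when $f$ is birationally conjugate to an automorphism and $\theta^{+}\cdot\theta^{-}=0$ otherwise. As the two conditions are mutually exclusive and exhaustive, it is enough to treat each case.

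\textbf{Case $\theta^{+}\cdot\theta^{-}>0$: $\lambda$ is Salem.} We may then assume $F$ is an automorphism, so $F^{*}$ is an \emph{isometry} of $(N,\cdot)$. From $F^{*}\theta^{+}=\lambda\theta^{+}$ and $\lambda>1$ we get $\theta^{+}\cdot\theta^{+}=\lambda^{2}(\theta^{+}\cdot\theta^{+})$, so $\theta^{+}$ is isotropic, and likewise $\theta^{-}$; since $\theta^{+}\cdot\theta^{-}>0$, the plane $H:=\mathbb{R}\theta^{+}\oplus\mathbb{R}\theta^{-}$ is $F^{*}$-invariant of signature $(1,1)$, so $H^{\perp}$ is $F^{*}$-invariant and negative definite. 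Then $F^{*}$ restricts to the negative definite lattice $H^{\perp}\cap N$ as an element of its (finite) orthogonal group, so every eigenvalue of $F^{*}$ other than $\lambda$ and $\lambda^{-1}$ is a root of unity; hence the characteristic polynomial of $F^{*}$ is $(t-\lambda)(t-\lambda^{-1})$ times a product of cyclotomics, monic, integral, reciprocal, with $\lambda$ irrational (a rational root would force $\lambda=1$). Comparing the constant term then shows $\lambda^{-1}$ is a conjugate of $\lambda$, so the minimal polynomial of $\lambda$ is a reciprocal integer polynomial with all roots but $\lambda,\lambda^{-1}$ on the unit circle, i.e.\ a Salem polynomial; the reciprocal-quadratic case (which would make $\lambda$ a quadratic Pisot unit rather than a Salem number) is excluded because $F_*\in W_n$ with $n+1=\operatorname{rk} N$ by Nagata, and the spectral radius of an element of $W_n$ is $1$ or a Salem number by Lakatos, so, being $>1$, it is a Salem number.

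\textbf{Case $\theta^{+}\cdot\theta^{-}=0$: $\lambda$ is Pisot.} Here $f$ is not birationally an automorphism, and the remaining point is that every eigenvalue of $F_*$ (equivalently of $F^{*}$, which has the same characteristic polynomial) other than $\lambda$ has modulus $<1$. This is the delicate step and the core of the Blanc--Cantat contribution. I would exploit that $F$ is algebraically stable while $F^{-1}$ need not be, so that $F^{*}$ is \emph{not} an isometry and its characteristic polynomial need not be reciprocal: pairing $F^{*}\theta^{+}=\lambda\theta^{+}$ against $\theta^{+}$ and using $\theta^{+}\cdot\theta^{+}=0$ indicates that $F^{*}$ descends to the quotient of $N\otimes\mathbb{R}$ by the invariant isotropic line $\mathbb{R}\theta^{+}$ as a strictly contracting operator; passing to the Picard--Manin space $\mathbb{H}^{\infty}$ of all iterated blow-ups and invoking the classification of its isometries (the action here being parabolic at a rational point, not loxodromic) then forces every eigenvalue of $F_*$ other than $\lambda$ into the open unit disc. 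Since the characteristic polynomial of $F_*$ is monic and integral with $\lambda$ its only root of modulus $\ge1$, $\lambda$ is by definition a Pisot number. The main obstacle is exactly this second case: the Salem alternative reduces, once $f$ is known to be an automorphism, to linear algebra over a definite lattice, whereas the Pisot alternative demands genuine control of the non-orthogonal action of $F_*$ and an explanation of why the degeneration $\theta^{+}\cdot\theta^{-}=0$ pushes the entire remaining spectrum strictly inside the unit circle.
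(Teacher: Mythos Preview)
The paper does not give its own proof of this theorem; it is stated as a citation of Diller--Favre and Blanc--Cantat and used as a black box. So there is no paper proof to compare against, and the question is whether your sketch stands on its own.

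Your Salem case is essentially the standard argument, but one step is not quite right as written: you say $F^{*}$ restricts to ``the negative definite lattice $H^{\perp}\cap N$'' and lands in its finite orthogonal group. The plane $H=\mathbb{R}\theta^{+}\oplus\mathbb{R}\theta^{-}$ is almost never rational (the eigenvectors $\theta^{\pm}$ live in $N\otimes\mathbb{R}$, not in $N$), so $H^{\perp}\cap N$ need not have full rank and the finiteness argument as stated does not go through. The fix is easy: work in $N\otimes\mathbb{R}$, observe that $F^{*}|_{H^{\perp}}$ is an isometry of a negative definite real inner product space, hence has all eigenvalues on the unit circle; then use that $F^{*}$ is an isometry of a unimodular lattice to see its characteristic polynomial is a reciprocal integer polynomial, and finish with Kronecker's theorem on the cyclotomic factor. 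Your appeal to Nagata and Lakatos to rule out the quadratic case is legitimate (and is exactly how this paper thinks about it).

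Your Pisot case has a genuine gap. You assert that on the Picard--Manin space the action is ``parabolic at a rational point, not loxodromic'' and that this forces the remaining spectrum strictly inside the unit disc. But when $\lambda(f)>1$ the action on $\mathbb{H}^{\infty}$ is \emph{loxodromic}, not parabolic, regardless of whether $\theta^{+}\cdot\theta^{-}$ vanishes on the finite-dimensional model; the parabolic case corresponds to $\lambda(f)=1$ with quadratic degree growth. So the mechanism you invoke does not exist here. The actual Blanc--Cantat argument is quite different: it does not simply read off the spectrum of $F_{*}$ on a single model, but analyses the degree sequence (equivalently, the orbit of an ample class under the non-isometric pullback) via a contraction/recurrence argument tied to the absence of an automorphism model, and this is what forces all conjugates of $\lambda$ other than $\lambda$ itself strictly inside the unit circle. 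Your sketch correctly identifies this as the hard half, but the proposed route through ``parabolic'' dynamics is not the right one.
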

A Salem number $\lambda_s$ is an algebraic integer greater than $1$ with a monic, self-reciprocal minimal polynomial of degree $\ge 4$, such that all Galois conjugates except $1/\lambda_s$ lie on the unit circle. A Pisot number $\lambda_p$ is an algebraic integer greater than $1$ such that all Galois conjugates have a modulus strictly less than $1$. 

\vspace{1ex}
Suppose $f\in \bptwo$ is birationally equivalent to an automorphism with $\lambda(f)>1$, and let $F: X \to X$ be its algebraically stable modification on a rational surface $X$. Then $F$ is an automorphism on a rational surface $X$, and by Nagata, \cite{Nagata, Nagata2}, $F_*$ belongs to $W_n$, the Coxeter group corresponding to the $E_n$ diagram where $n+1$ is the Picard number of $X$. We will discuss the Coxeter group in the next Section \ref{S:coxeter}. 

The Converse is not true. Kim \cite{Kim:2022} shows that there is an element $\omega \in E_{14}$ with $\rho(\omega)>1$ such that there is no rational surface automorphism $F$ such that $F_* = \omega$. However, McMullen \cite{McMullen:2007}, Uehara \cite{Uehara:2010}, and Kim \cite{Kim:2023} show that for any spectral radius of $\omega \in E_n$ with $ n\ge 10$, there is a birational map $f\in \bptwo$ such that $\lambda(f) = \rho(\omega)$. 

For a real number $\lambda  \ge 1$, we say $\lambda$ is \textit{realized} if there is a birational map $f \in \bptwo$ with $\lambda(f) = \lambda$.

\section{Coxeter Group}\label{S:coxeter}
Let $\mathbb{Z}^{1,n}, n\ge 3$, be a lattice with the ordered basis $(e_0,e_1, \dots, e_n)$ and the symmetric bilinear form given by \[ e_0 \cdot e_0 = 1, \quad  \ e_i \cdot e_i = -1, \ \text{for } i =1, \dots, n, \quad \text{ and } \quad e_i \cdot e_j =0 \ \text{for } i \ne j. \] Thus, for any $x= (x_0, x_1, \dots, x_n) = \sum x_i e_i \in \mathbb{Z}^{1,n}$, \[ x \cdot x \ = x_0^2 - x_1^2-x_2^2- \cdots - x_n^2.\]
The canonical vector is $\kappa_n = (-3,1,\dots,1) \in \mathbb{Z}^{1,n}$, and let $L_n$ denote the orthogonal complement of $\kappa_n$. 

Also, define \[ \alpha_0 = e_0-e_1-e_2-e_3, \qquad \text{and} \qquad \alpha_i = e_i- e_{i+1},\ i=1, \dots, n-1.\] Simple computations show
\[ \alpha_i \cdot \kappa_n =0, \ \ \alpha_i \cdot \alpha_i = -2, \ \ \ \text{for all } i = 0,1,\dots, n-1.\]
Thus, we see that $L_n$ is generated by these vectors $\alpha_0,\alpha_1, \dots, \alpha_{n-1}$: \[ L_n = \kappa_n^\perp= \langle \alpha_0, \alpha_1, \dots, \alpha_{n-1} \rangle. \]
For each $i=0,\dots, n-1$, define a reflection $s_i$ through $\alpha_i$ on $\mathbb{Z}^{1,n}$:
\[ s_i(x)\  :=\ x + (x \cdot \alpha_i)\, \alpha_i.\]
Then we have: \[ s_i(\kappa_n) = \kappa_n, \quad s_i(\alpha_i) = - \alpha_i \ \ \text{for all } i=0,1,\dots, n-1,  \]
and for $x, y \in L_n$ \[ s_i(x) \in L_n, \qquad s_i(x) \cdot s_i(y) \ = x \cdot y.\]
Thus, $s_i \in O(L_n)$ for $i=0,\dots, n-1$ and satisfy the following relations $(s_i s_j)^{m_{ij} }= 1$ where 
\begin{equation}\label{E:reflections}
m_{ij}\ =\ \left \{ \begin{aligned}
& 1 \qquad \text{if } i=j\\
& 3 \qquad \text{if } \{i,j\} \in \{ \{0,3\}, \ \{i,i+1\}, \ i=1,2, \dots, n-2\} \\
& 2 \qquad \text {otherwise} \\
\end{aligned} \right.
\end{equation}
The subgroup $W_n \subset O(L_n)$ generated by reflections $s_0, \dots, s_{n-1}$ is called the Coxeter group.  
For $m \le n$, we may consider $W_m$ as a subgroup of $W_n$ by fixing $e_i$ for $i \ge m+1$:
\[ W_m \cong  \langle s_0, s_1, \dots, s_{m-1} \rangle  \subset W_n.  \]
With this identification, we consider the sequence \[W_3 \subset W_4 \subset W_5 \subset \cdots W_n \subset W_{n+1} \subset \cdots.\] We define \textit{the infinite Coxeter group} $W_\infty$ as a nested union of $W_n$ for $n\ge 3$:
\[ W_\infty :=\cup_{n\ge 3} W_n. \]

\subsection{Root system}\label{SS:root}
Let $V_n$ be an inner product space over $\mathbb{R}$ with the basis $(\alpha_0, \dots, \alpha_{n-1})$, where \[ (\alpha_i, \alpha_j) = -2 \cos (\pi/m_{ij}). \]  Thus, $(\alpha_i, \alpha_j) = 2,0,-1$ if $m_{ij}= 1,2,3$, respectively. For any $\alpha \in V_n$ with $(\alpha, \alpha) = \pm 2$, we can define a reflection $s_\alpha$ by \[ s_\alpha (\lambda) \ =\ \lambda- \frac{2 (\lambda, \alpha)}{(\alpha, \alpha)} \alpha.\] 
Thus, we may assume that $L_n \subset V_n$ and each $s_i \in W_n$ gives the reflection in $O(V_n)$ preserving $L_n$. 

The generating vectors $\alpha_i$ are referred to as \textit{simple roots}, and the \textit{root system} $\Phi_n = \cup_i W_n \alpha_i$ is the set of $W_n$ orbits in $V_n$ of simple roots. (General theories and precise definitions can be found in many sources, including \cite{Humphreys:1990}.) It is useful to note that any root is either positive or negative; in other words, any root can be expressed either as a positive sum of simple roots or a negative sum.  
\[ \Phi_n = \Phi_n^+ \cup \Phi_n^-\]
where $\Phi_n^\pm = \{ \sum c_i \alpha_i : \pm c_i \ge 0 \, \text{for all } i \} $. 

\subsection{Weyl degree}\label{SS:wdeg}

For  each  element $\omega \in W_\infty$, let us define \textit{Weyl degree}(or simply degree), $\deg(\omega)$, of $\omega$  by
\[ \deg (\omega) : = \omega e_0 \cdot e_0.\] If $\deg (\omega) =1$, then $\omega e_0 = e_0$, and $\omega$ permutes $e_i$'s. Thus, the spectral radius $\lambda(\omega) = 1$. 

For a positive integer $d\ge 1$, let $\Omega_d$ denote the set of elements in $W_\infty$ with the Weyl degree $d$:  \[ \Omega_d \ := \ \{ \omega \in W_\infty \ :\ \deg(\omega)= d \} . \]
Also, let $\Lambda_d$ denote the set of spectral radii of elements in $\Omega_d$: \[ \Lambda_d := \{ \lambda(\omega):  \deg(\omega) =d \}.\]
Clearly, the set of all spectral radii of $W_\infty$ is equal to the union  $\cup_d \Lambda_d$:
\[ \{ \lambda(\omega):  \omega\in W_\infty \} \ =\ \{ \lambda(\omega):  \omega\in \cup_{d\ge 1} \Omega_d \}\  =\  \cup_d \Lambda_d \]
 
McMullen \cite{McMullen:2002} showed that the characteristic polynomial of any element in $W_n$ is either a product of cyclotomic polynomials, a product of a reciprocal quadratic polynomial and cyclotomic polynomials, or a product of a Salem polynomial and cyclotomic polynomials. Thus, the spectral radius is either $1$, a reciprocal quadratic integer, or a Salem number. By Theorem \ref{T:df} of Diller-Favre and Blanc-Cantat, we have \[ \cup_d \Lambda_d = \{1\} \cup \Lambda_S,\] where $\Lambda_S$ is all dynamical degrees greater than $1$ of birational maps equivalent to an automorphism. 

Let us denote $\Lambda_W$, the Weyl spectrum, as the closure of the set of all spectral radii of $W_\infty$:
 \[\Lambda_W \  :=\  \overline{\cup_d \Lambda_d}. \] 
 
\subsection{Matrix Representations}
Let $\omega \in W_n$ such that 
\begin{equation}\label{E:mat} \begin{aligned} &\omega(e_0) = d e_0 - \sum_{i=1}^{n}  c_i e_i,\\ & \omega(e_j) = d_i e_0 - \sum_{i=1}^n m_{i,j} e_i \qquad j = 1, \dots, n, \end{aligned} \end{equation}
where $d= \deg(\omega)$ and $d_i, c_i, m_{i,j}$ are non-negative integers.

Let $\bar d = (d_1, \dots, d_n)$, $\bar c= (c_1, \dots, c_n)$ be $n$-tuples of non-negative integers. Also, let $m= \{ m_{i,j}\}$ be an $n\times n$ matrix with non-negative integers. Then, with an ordered basis $\{ e_0, e_1, \dots, e_n\}$, we can write $\omega$ as an $(n+1) \times (n+1)$ matrix:  
\[ \omega = \begin{bmatrix} d&\bar d \\ -\bar c & -m \end{bmatrix}. \]

\vspace{1ex}
Because $\omega \in O(L_n)$, we have the following basic facts (known as Noether's equalities and inequalities). 
\begin{lem}\label{L:facts}
Suppose $\omega  \in W_n$ is given by (\ref{E:mat}).Then we have: 
\begin{enumerate}
\item $3 (d-1)  = \sum_{i}d_i= \sum_i c_i$.
\item $d^2 -1 =  \sum_{i} d_i^2 = \sum_i c_i^2$.
\item $d_j^2 - \sum_i m_{i,j}^2 =-1$ for all $j=1, \dots, n$. Therefore: 
\begin{itemize}
\item if $d_j >0$, then $ m_{ij}< d_j$ for all $i=1, \dots, n$, 
\item if $d_j=0$, then there exists $1\le i_j\le n $ such that $m_{i_j,j}=1$, and $m_{i,j}=0$ for $i\ne i_j$.
\end{itemize}
\item $c_i^2 - \sum_j m_{i,j}^2 =-1$ for all $i=1, \dots, n$. Therefore:
\begin{itemize}
\item if $c_i >0$, then $ m_{ij}< c_i$ for all $j=1, \dots, n$, 
\item if $c_i=0$, then there exists $1\le j_i\le n$ such that $m_{i,j_i}=1$, and $m_{i,j}=0$ for $j\ne j_i$.
\end{itemize}
\item The sum of the three largest $d_i$'s is greater than or equal to $d+1$, and the same is true for $\bar c$:\[ d_1+d_2+d_3 \ge d+1\ \  \text{after rearranging indices so that  } d_1 \ge d_2\ge \cdots \ge d_n. \]
\item The sum of any two $d_i$'s is smaller than or equal to $d$, and the same is true for $\bar c$: \[ d_i + d_j \le d \ \ \ \text{for }  i \ne j. \]
\item $\omega^{-1}$ is given by the matrix representation: 
\[ \begin{bmatrix} d&\bar c \\ -\bar d & -m^t \end{bmatrix}, \] where $m^t$ is the transpose of $m$. 
\end{enumerate}
\end{lem}

\begin{proof}
Except for the last statement, the proof is written in several articles, including \cite{blancdynamical} and \cite{dolgachev:1883}.
The last statement can be checked by multiplying two matrices using other statements in this lemma. 
\end{proof}
\subsection{Decompositions}
For any $\omega \in W_n$, we have $\omega e_j \cdot \omega e_j =-1$ for $j = 1, \dots, n$. It follows that for each $j = 1, \dots, n$ we have: \[ \text{either }\ \ \omega e_j = e_k\ \ \text{for some  }k \ge 1\ \quad \text{or }\quad \omega e_j \cdot e_0 \ne 0. \]
For each $\omega \in W_n$, let us define: 
\begin{equation}\label{E:twoindex} \begin{aligned} &I_e(\omega) \ :=\  \{ 0 \le i \le n :  \omega^k e_i \cdot e_0 \ne 0 \quad \text{for some } k\ge 0 \}, \\ &I_p(\omega)  \ :=\  \{ 0 \le i \le n :  \omega^k e_i \cdot e_0 = 0 \quad \text{for all } k \ge 0 \}.\\ \end{aligned}\end{equation}
Clearly, if $\omega e_ i = e_j$ with $i \in I_e$, then $j \not\in I_p$. Also, if $i \in I_p$, then $\omega e_i = e_j$ for some $j \in I_p$.  
Let us denote $n_e : = |I_e(\omega)|-1$. Then, by reordering the $e_i's$, we may assume that: \[ I_e(\omega) = \{ 0, 1, \dots, n_e\} , \quad \text{and} \quad I_p(\omega)= \{ n_e+1, \dots, n\}.\]
Let us define $\omega_e \in W_n$ satisfying: \begin{equation}\label{E:twoele1}  \omega_e e_i \ =\  \omega e_i , \text{ for } i \in I_e, \qquad \text{and} \qquad  \omega_e e_i \ =\  e_i, \text{ for } i \in I_p.\end{equation}
Also, let us define $\omega_p \in W_n$ by: \begin{equation}\label{E:twoele2} \omega_p e_i \ =\   e_i , \text{ for } i \in I_e, \qquad \text{and} \qquad  \omega_p e_i \ =\  \omega e_i, \text{ for } i \in I_p.\end{equation}


\begin{lem}\label{L:essentialpart}
Suppose $\omega \in W_n$ such that $I_e(\omega) = \{0,1, \dots, m\}$ for some $m\le n$. Then, with $\omega_e, \omega_p$ given in (\ref{E:twoele1}) and (\ref{E:twoele2}), we have \[ \omega = \omega_e \cdot \omega_p,\] such that:
\begin{enumerate}
\item $\omega_e  \in \langle s_0, s_1, \dots, s_{m-1} \rangle$,  $\deg(\omega) = \deg(\omega_e)$, 
\item $\omega_p \in \langle s_{m+1}, \dots, s_{n-1} \rangle $, and
\item the spectral radius $\lambda(\omega)$ of $\omega$ is given by the spectral radius of $\omega_e$, \[ \lambda(\omega) \ = \ \lambda(\omega_e).\]
\end{enumerate}
\end{lem}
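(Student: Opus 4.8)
The plan is to uncover the block structure of the matrix of $\omega$ that is forced by the partition $\{0,\dots,n\}=I_e(\omega)\sqcup I_p(\omega)$, then read off all four assertions from it, isolating the membership $\omega_e\in\langle s_0,\dots,s_{m-1}\rangle$ as the one point that genuinely needs Coxeter theory. \emph{Step 1 (block structure).} By the remark following (\ref{E:twoindex}), $\omega$ restricts to an honest permutation of $\{e_i:i\in I_p(\omega)\}$, hence so does $\omega^{-1}$. Therefore, for $i\in I_e(\omega)$ and $l\in I_p(\omega)$, using that $\omega$ is an isometry,
\[ \omega e_i\cdot e_l\ =\ e_i\cdot\omega^{-1}e_l\ =\ 0, \]
since $\omega^{-1}e_l=e_{l'}$ with $l'\in I_p(\omega)$ and $I_e\cap I_p=\varnothing$ gives $i\neq l'$. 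Writing $\mathbb{Z}^{1,m}=\langle e_0,\dots,e_m\rangle$ (recall $I_e(\omega)=\{0,\dots,m\}$), this says $\omega e_i\in\mathbb{Z}^{1,m}$ for $0\le i\le m$, while $\omega e_l=e_{l'}$ for $l>m$. Hence, in the ordered basis $(e_0,\dots,e_n)$, the matrix of $\omega$ is block diagonal, with an $(m+1)\times(m+1)$ block $A$ on $\mathbb{Z}^{1,m}$ and an $(n-m)\times(n-m)$ permutation block $P$ on $\langle e_{m+1},\dots,e_n\rangle$; comparing with (\ref{E:twoele}), $\omega_e$ is $A$ extended by the identity on $\langle e_{m+1},\dots,e_n\rangle$, and $\omega_p$ is $P$ extended by the identity on $\mathbb{Z}^{1,m}$.

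\emph{Step 2 (the easy consequences).} The identity $\omega=\omega_e\omega_p$ is checked on basis vectors: if $i>m$ then $\omega_p e_i=\omega e_i=e_{l'}$ with $l'>m$ and $\omega_e e_{l'}=e_{l'}$; if $i\le m$ then $\omega_p e_i=e_i$ and $\omega_e e_i=\omega e_i$. Since $0\in I_e(\omega)$ we have $\omega_e e_0=\omega e_0$, so $\deg(\omega_e)=\omega_e e_0\cdot e_0=\omega e_0\cdot e_0=\deg(\omega)$, the second half of (1). Also $\omega_p$ is an honest permutation of $\{e_{m+1},\dots,e_n\}$ fixing $e_0,\dots,e_m$; since $s_i$ $(m+1\le i\le n-1)$ is the transposition of $e_i$ and $e_{i+1}$ fixing every other $e_j$, and adjacent transpositions generate the symmetric group on $\{e_{m+1},\dots,e_n\}$, we get $\omega_p\in\langle s_{m+1},\dots,s_{n-1}\rangle$, which is (2); in particular $\omega_p\in W_n$, and therefore $\omega_e=\omega\,\omega_p^{-1}\in W_n$. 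Finally, the eigenvalues of $\omega$ are those of $A$ together with those of $P$ (roots of unity), and the eigenvalues of $\omega_e$ are those of $A$ together with $1$ (multiplicity $n-m$); hence $\lambda(\omega)=\max\{1,\rho(A)\}=\lambda(\omega_e)$, which is (3).

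\emph{Step 3 (the crux, and the main obstacle).} It remains to prove $\omega_e\in\langle s_0,\dots,s_{m-1}\rangle$. By Step 2, $\omega_e\in W_n$, and by Step 1 it fixes $e_{m+1},\dots,e_n$ pointwise. One route is to invoke the standard fact that the pointwise stabilizer in a reflection group of a subset of the ambient space is generated by the reflections it contains: a reflection $s_\beta$ $(\beta\in\Phi_n)$ fixes $e_{m+1},\dots,e_n$ iff $\beta\perp\langle e_{m+1},\dots,e_n\rangle$, i.e.\ $\beta\in L_n\cap\mathbb{Z}^{1,m}=L_m$, and for these lattices the roots lying in $L_m$ are exactly $\Phi_m$, whose reflections generate $\langle s_0,\dots,s_{m-1}\rangle$; the reverse inclusion is the immediate check $\alpha_i\perp e_j$ for $i\le m-1<j$. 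A self-contained alternative is induction on $d=\deg(\omega_e)$: if $d=1$ then $\omega_e$ permutes $e_1,\dots,e_n$ (it fixes $e_0$ and $\kappa_n$), hence permutes $e_1,\dots,e_m$ and lies in $\langle s_1,\dots,s_{m-1}\rangle$; if $d\ge2$, write $\omega_e e_0=d\,e_0-\sum_{l\le m}c_l e_l$, pick $i_1,i_2,i_3\le m$ indexing the three largest $c_l$, so that $c_{i_1}+c_{i_2}+c_{i_3}\ge d+1$ by Lemma~\ref{L:facts}(4), and note the reflection in the root $e_0-e_{i_1}-e_{i_2}-e_{i_3}$ lies in $\langle s_0,\dots,s_{m-1}\rangle$ and strictly lowers the degree of $\omega_e$, so the inductive hypothesis applies. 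I expect this step to be the real obstacle: Step 1 only shows $\omega_e$ is an isometry of $\mathbb{Z}^{1,m}$ fixing $\kappa_m$, and $O(L_m)$ can be strictly larger than $W_m$ (it may contain $-\mathrm{id}$ or a diagram automorphism), so one must use in an essential way that $\omega_e$ lies in $W_n$ — whether by correctly quoting the stabilizer theorem for this hyperbolic lattice, or by checking that the inductive reduction stays inside the standard parabolic and is compatible with the sign conventions behind Lemma~\ref{L:facts}.
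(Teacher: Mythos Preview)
Your proof is correct and follows the same block-diagonal idea as the paper, but is far more complete: the paper's own proof is a three-line sketch that simply asserts $\omega=\omega_e\omega_p$, the first two items, and the block structure all follow ``from (\ref{E:twoele})'', and then reads off $\lambda(\omega)=\lambda(\omega_e)$. In particular, the paper does not address why $\omega_e$ lands in the standard parabolic $\langle s_0,\dots,s_{m-1}\rangle$; your Step~3 supplies exactly this missing content, and both routes you offer (the Steinberg-type pointwise-stabilizer argument, or the Noether-style induction on degree via Lemma~\ref{L:facts}(4)) are valid ways to close that gap.
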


\begin{proof}
From equations (\ref{E:twoele1}) and (\ref{E:twoele2}), we see that $\omega = \omega_e \cdot \omega_p$, which validates the first two assertions. Additionally, it is evident that the matrix representation of $\omega$ can be written as a block diagonal matrix, and $\lambda(\omega_p) =1$. Thus, we conclude that $ \lambda(\omega) \ = \ \lambda(\omega_e)$.
\end{proof}

Since $\omega_p$ does not contribute to the spectral radius of $\omega$, let us focus on elements with $\omega_ p = Id$. 
By reordering $e_1, \dots, e_n$, there are $k \le n$ distinct positive integers, $1\le n_1< n_2< \cdots < n_k \le n$, such that: 
\begin{itemize}
\item $\omega e_i \ = \ e_{i+1}$ if  $i \not\in \{ n_j, j=1, \dots, k \}$, 
\item $\omega e_{n_j} \cdot e_0 = d_j >0$ for all $ j = 1, \dots, k$.
\end{itemize}

Let us set $n_0 = 0$ and define $\omega_c \in W_n$ by:
\begin{equation}\label{E:wc} \begin{aligned} \omega_c \ :\  & e_0 \mapsto \omega e_0,\\ &e_{n_{i-1}+1} \mapsto \omega e_{n_i}, i =1, \dots, k, \quad  \text{and} \\ & e_j \mapsto e_j ,  \text{ for } j \ne n_i+1, i =0,1, \dots, k-1. \\ \end{aligned}.  \end{equation}
 Also, let us define $\omega_s \in W_n$ by: 
 \begin{equation}\label{E:ws} \begin{aligned} \omega_s \ :\ & e_0 \mapsto e_0, \\ & e_1 \mapsto e_2 \mapsto \cdots \mapsto e_{n_1} \mapsto e_1,  \quad \text{and}\\ & e_{n_{i-1} + 1} \mapsto  e_{n_{i-1} + 2} \mapsto \cdots \mapsto  e_{n_{i} }\mapsto  e_{n_{i-1} + 1}, \ \ \text{for } i = 2, \dots, k.\\ \end{aligned} \end{equation}
With Lemma \ref{L:essentialpart}, we see that $\omega \in W_n$ is conjugate to a product of three elements defined in (\ref{E:twoele1} - \ref{E:ws}). 

\begin{prop}\label{P:decom}
Suppose $\omega \in W_n$ such that $\deg(\omega) = d \ge 2$. Then, there is a permutation $\rho$ acting on $e_i,  i=1, \dots, n,$ such that
\[ \omega = \rho^{-1} \cdot \omega_c \cdot \omega_s \cdot \omega_p \cdot \rho. \]
Furthermore, we have $\deg(\omega_c) = \deg( \omega)$ and $\lambda(\omega_c \cdot \omega_s) = \lambda(\omega)$.
\end{prop}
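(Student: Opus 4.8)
The plan is to peel off the three factors $\omega_p$, $\omega_s$, $\omega_c$ in the reverse order to which they were constructed, reducing the statement to a combinatorial description of how $\omega$ permutes the basis vectors $e_1,\dots,e_n$. First I would reduce to the case $I_e(\omega)=\{0,1,\dots,m\}$: since $0\in I_e(\omega)$ always, and $I_e(\rho_1\omega\rho_1^{-1})=\rho_1(I_e(\omega))$ for any permutation $\rho_1$ of $e_1,\dots,e_n$ fixing $e_0$ (every such $\rho_1$ lies in $W_n$, because $\langle s_1,\dots,s_{n-1}\rangle$ is the full symmetric group on $\{e_1,\dots,e_n\}$), we may conjugate by such a $\rho_1$ to put $I_e(\omega)$ into that form, which changes neither $\deg(\omega)$ nor $\lambda(\omega)$. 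Lemma \ref{L:essentialpart} then writes $\omega=\omega_e\omega_p$, where $\omega_e$ is supported on $\langle e_0,\dots,e_m\rangle$ and $\omega_p$ is a permutation of $\{e_{m+1},\dots,e_n\}$; in particular $\omega_e$ and $\omega_p$ commute, $\deg(\omega)=\deg(\omega_e)$, and $\lambda(\omega)=\lambda(\omega_e)$.

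Next I would study the finite directed graph $\Gamma$ on the index set $\{1,\dots,m\}$ with an edge $i\to j$ precisely when $\omega_e e_i=e_j$. Because $\omega_e e_i\cdot\omega_e e_i=-1$, for each $i$ one has the dichotomy recorded in (\ref{E:twoindex}): either $\omega_e e_i=e_j$ for a unique index $j$, which moreover cannot lie in $I_p$, or $\omega_e e_i\cdot e_0\neq 0$; thus every vertex of $\Gamma$ has in- and out-degree at most $1$. Moreover $\Gamma$ has no directed cycle, since a cycle through $i$ would give $\omega_e^{\,k}e_i\cdot e_0=0$ for all $k\ge 0$, i.e.\ $i\in I_p$, contradicting $i\in I_e$. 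Hence $\Gamma$ is a disjoint union of $k$ directed paths, and I can choose a permutation $\rho_2\in W_n$ fixing $e_0$ and each $e_j$ with $j>m$ that lists these paths consecutively, so that $\omega':=\rho_2\omega_e\rho_2^{-1}$ satisfies, for suitable $1\le n_1<\cdots<n_k=m$, exactly $\omega' e_i=e_{i+1}$ for $i\in\{1,\dots,m\}\setminus\{n_1,\dots,n_k\}$, $\omega' e_{n_j}\cdot e_0>0$, and $\omega' e_i=e_i$ for $i>m$. That is, $\omega'$ is in the normal form underlying the definitions of $\omega_c$ and $\omega_s$ in (\ref{E:wc})--(\ref{E:ws}).

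The core step is the identity $\omega'=\omega_c\,\omega_s$, which I would verify directly on the ordered basis. On $e_0$ both sides equal $\omega'e_0$, since $\omega_s e_0=e_0$. On an interior vertex $e_j$ of a path, $\omega_s e_j=e_{j+1}$ and $e_{j+1}$ is not a path head, so $\omega_c$ fixes it, giving $(\omega_c\omega_s)e_j=e_{j+1}=\omega' e_j$. On a tail $e_{n_i}$, $\omega_s$ sends it to the head $e_{n_{i-1}+1}$ and $\omega_c$ sends that to $\omega' e_{n_i}$; the head case (isolating the length-one paths) is analogous; and every $e_j$ with $j>m$ is fixed by $\omega'$, $\omega_c$, and $\omega_s$ alike. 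Granting this identity, conjugate the reduced $\omega=\omega_e\omega_p$ by $\rho_2$: since $\rho_2$ and $\omega_p$ have disjoint supports, $\rho_2\omega\rho_2^{-1}=(\rho_2\omega_e\rho_2^{-1})\omega_p=\omega_c\omega_s\omega_p$, and undoing the first conjugation yields $\omega=\rho^{-1}\omega_c\omega_s\omega_p\rho$ with $\rho=\rho_2\rho_1$. The two remaining assertions are then immediate: $\deg(\omega_c)=\omega_c e_0\cdot e_0=\omega' e_0\cdot e_0=\omega_e e_0\cdot e_0=\deg(\omega)$ because $\rho_2$ is an isometry fixing $e_0$, and $\lambda(\omega_c\omega_s)=\lambda(\omega')=\lambda(\omega_e)=\lambda(\omega)$ since $\omega'$ is conjugate to $\omega_e$ together with Lemma \ref{L:essentialpart}(3).

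I expect the only genuine obstacle to be the combinatorial bookkeeping of the second paragraph: establishing that the $\omega_e$-orbit graph on $\{1,\dots,m\}$ is a disjoint union of directed paths with no internal cycles, and that such a path can be neither fed from nor emptied into an index of $I_p$, so that after relabeling by $\rho_2$ the normal form (\ref{E:wc})--(\ref{E:ws}) applies verbatim (here one uses the block-diagonal structure of $\omega$ supplied by Lemma \ref{L:essentialpart} and the coordinate non-negativity from (\ref{E:mat}) for the strict inequality $\omega' e_{n_j}\cdot e_0>0$). Everything downstream is a finite basis-vector check, and the degree and spectral-radius statements fall out of conjugation-invariance and Lemma \ref{L:essentialpart}.
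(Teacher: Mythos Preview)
Your proposal is correct and follows the same route as the paper. The paper's own proof is a two-line sketch (``reordering the $e_i$'s is conjugation by a permutation, so the Proposition comes from Lemma~\ref{L:essentialpart}''), relying on the unproven assertions preceding the statement that (i) after reordering, $\omega_e$ sends $e_i\mapsto e_{i+1}$ except at finitely many indices $n_j$ where $\omega_e e_{n_j}\cdot e_0>0$, and (ii) this normal form factors as $\omega_c\cdot\omega_s$. Your argument supplies exactly these missing details: the directed-graph analysis showing the $\omega_e$-orbits on $\{1,\dots,m\}$ are acyclic paths gives (i), and your basis-vector check gives (ii); the rest is the same conjugation bookkeeping the paper invokes.
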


\begin{proof}
The reordering of the elements $e_i$'s, $i=1, \dots, n,$ is achieved by conjugating by a permutation in $e_i,  i=1, \dots, n$. Thus, this proposition follows from Lemma \ref{L:essentialpart}.
\end{proof}

\subsection{Formulas for Spectral radii}\label{SS:fspec}
In this subsection, we obtain a formula for the characteristic polynomial of a given $\omega \in \Omega_d$ up to cyclotomic factors. Since $\omega_p$ in Proposition \ref{P:decom} relates only to the cyclotomic factor, we may assume both $\omega_p$ and $\rho$ in Proposition $\ref{P:decom}$ are the identity. 

\vspace{1ex}
For $\omega = \omega_c \cdot \omega_s \in \Omega_d$, let $\tilde \omega_c$ be a $(k+1) \times (k+1)$ submatrix of $\omega_c$ given by columns and rows corresponding to
 $ e_0, e_{n_0+1}, \dots ,e_{ n_{k-1}+1}$. In other words, $\tilde \omega_c$ is a matrix obtained by excluding columns and rows corresponding to the $e_i$'s fixed by $\omega_c$. Additionally, let $n(\omega)$ be a $k$ tuple of positive integers representing the lengths of cycles in $\omega_s$
\[ n(\omega)\  :=\  (n_1, n_2-n_1, n_3-n_2, \cdots, n_k-n_{k-1} ).\]
For each $\omega \in W$, the numerical data $k, \tilde \omega_c,$ and $n(\omega)$ are uniquely determined up to reordering indices. We define the \textit{orbit data} $\mathcal{O}(\omega)$ of $\omega$ by the numerical data $(\tilde \omega_c, n(\omega))$: \[ \mathcal{O}(\omega) : = (\tilde \omega_c, n(\omega)).\] Also, we refer to $n(\omega)\in \mathcal{O}(\omega)$ as the \textit{orbit lengths}. The element $\tilde \omega_c$ is determined by the images of $e_i$, which map to a vector $\ne e_j,$ for any $j\ge 1$ under $\omega$. We call $\tilde \omega_c$ the \textit{orbit permutation}. If $v = \sum m_i e_i$ satisfies $v\cdot v = -1$ and $v \ne e_j$ for all $j\ge 1$, then $m_0 \ne 0$. Thus, we have: 
\begin{prop}\label{P:nozerosw}
Suppose $\omega \in W_n$ has the orbit data $\mathcal{O}(\omega) = (\tilde \omega_c, \bar n)$. Then, the first column and the first row of $\tilde \omega_c$ consist of non-zero integers. 
\end{prop}

\begin{rem}
This definition of orbit data for an element of a Coxeter group is comparable with the orbit data of a birational map defined in Section \ref{S:birational}.
If $f \in \text{Bir}(\mathbf{P}^2(\mathbb{C}))$ is birationally equivalent to an automorphism $F:X \to X$ on a rational surface $X$, then the orbit data of $f$ is equal to the orbit data of the linear action $F_* \in W_\infty$ up to relabelling the $e_i$'s.
\end{rem}

\begin{lem} For $\omega \in \Omega_d, d\ge 2$, the spectral radius $\lambda(\omega)$ of $\omega$ is determined by the orbit data, a $(k+1)\times (k+1)$ matrix $\tilde \omega_c$, and a $k$-tuple $n(\omega)$ of positive integers. 
\end{lem}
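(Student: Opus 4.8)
The plan is to reconstruct the full matrix of $\omega_c \cdot \omega_s$ on $\mathbb{Z}^{1,n}$ from the orbit data $(\tilde\omega_c, n(\omega))$, and then invoke Lemma~\ref{L:essentialpart} and Proposition~\ref{P:decom} to conclude that this matrix has the same spectral radius as $\omega$. First I would observe that, by Proposition~\ref{P:decom}, $\omega$ is conjugate to $\omega_c\cdot\omega_s\cdot\omega_p\cdot\rho$, and by Lemma~\ref{L:essentialpart}(3) together with the remark preceding Proposition~\ref{P:decom} (that $\omega_p$ contributes only cyclotomic factors), we have $\lambda(\omega) = \lambda(\omega_c\cdot\omega_s)$. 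So it suffices to show that $\omega_c\cdot\omega_s$, as a linear map on the lattice spanned by $e_0$ and the cycle-vectors $e_1,\dots,e_{n_k}$, is completely determined by the data $(\tilde\omega_c, n(\omega))$.

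Next I would make this reconstruction explicit. The tuple $n(\omega) = (n_1, n_2-n_1,\dots, n_k-n_{k-1})$ records the lengths $\ell_i := n_i - n_{i-1}$ of the $k$ cycles of $\omega_s$ (with $n_0=0$); equivalently it tells us that the index set $\{1,\dots,n_k\}$ splits into consecutive blocks $B_i = \{n_{i-1}+1,\dots,n_i\}$ of sizes $\ell_i$, and $\omega_s$ cyclically shifts within each block fixing $e_0$. The matrix $\tilde\omega_c$ records the action of $\omega_c$ on the quotient basis $\{e_0, e_{n_0+1}, e_{n_1+1},\dots,e_{n_{k-1}+1}\}$ (one representative per block), while $\omega_c$ fixes every other $e_j$. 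Composing, the map $\omega_c\cdot\omega_s$ sends $e_0 \mapsto \omega_c e_0$, sends $e_j \mapsto e_{j+1}$ whenever $j$ is not the last index of its block, and sends the last index $e_{n_i}$ of block $i$ to $\omega_c e_{n_{i-1}+1}$, which is exactly the corresponding column of $\tilde\omega_c$ expressed back in terms of $e_0$ and the first-elements $e_{n_{j-1}+1}$. Every coefficient appearing here is an entry of $\tilde\omega_c$ or a structural $0$/$1$ determined by $n(\omega)$, so the full matrix — hence its characteristic polynomial and its spectral radius — depends only on $(\tilde\omega_c, n(\omega))$. Finally I would note that, by the last sentence of Subsection~\ref{SS:fspec}, $(\tilde\omega_c, n(\omega))$ is itself well-defined up to reordering indices, and reordering does not change the spectral radius, so $\lambda(\omega)$ is genuinely a function of the orbit data.

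The only mildly delicate point — and the step I expect to require the most care — is bookkeeping the identification of the matrix $\omega_c\cdot\omega_s$ with a matrix built purely from $\tilde\omega_c$ and the block sizes: one must check that $\omega_c$ really does fix all $e_j$ with $j$ not a block-initial index (this is built into the definition (\ref{E:wc})), and that the "return" columns of $\omega_c\cdot\omega_s$ are correctly read off from $\tilde\omega_c$ after the cyclic shift, so that no information beyond the orbit data is used. This is essentially the same computation already implicit in the construction of $F_*$ from orbit data in Section~\ref{S:birational} (equations (\ref{E:fstar1})--(\ref{E:fstar3})), so I would phrase the proof as: by (\ref{E:wc}) and (\ref{E:ws}) the matrix of $\omega_c\cdot\omega_s$ on $\langle e_0,e_1,\dots,e_{n_k}\rangle$ is, up to the sub-diagonal $1$'s dictated by $n(\omega)$, assembled from the columns of $\tilde\omega_c$; hence $\chi_{\omega_c\cdot\omega_s}$ and thus $\lambda(\omega)=\lambda(\omega_c\cdot\omega_s)$ depend only on $\mathcal{O}(\omega)=(\tilde\omega_c,n(\omega))$. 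No genuine obstacle arises; the content is really in Proposition~\ref{P:decom} and Lemma~\ref{L:essentialpart}, which are already available.
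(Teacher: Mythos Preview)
Your proposal is correct and follows essentially the same approach as the paper: the paper's proof consists of the single sentence that from (\ref{E:wc}) and (\ref{E:ws}) one can reconstruct $\omega_c$ and $\omega_s$ from the orbit data, which is exactly the reconstruction you spell out in detail. Your invocation of Proposition~\ref{P:decom} for $\lambda(\omega)=\lambda(\omega_c\cdot\omega_s)$ is implicit in the paper's setup (the paragraph preceding the lemma already assumes $\omega_p$ and $\rho$ are the identity), so no additional ideas are needed.
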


\begin{proof}
From (\ref{E:wc}) and (\ref{E:ws}), it is clear that we can reconstruct $\omega_c$ and $\omega_s$ from the orbit data. 
\end{proof}

For $d \ge 2$, let us define $\mathcal{M}_d$ as the set of all orbit permutations, $\tilde \omega_c$ for all $\omega \in \Omega_d$:  \[ \mathcal{M}_d:= \{ \tilde \omega_c : \omega \in \Omega_d \}.\] 

\begin{lem} Let $d \ge 2$. The set of orbit permutations $\mathcal{M}_d$ is a finite set:
\[ |\mathcal{M}_d| < \infty. \]
\end{lem} 

\begin{proof}
Suppose $\begin{bmatrix} d&\bar d\\-\bar c&-m\end{bmatrix} \in \mathcal{M}_d$. Then $d_i,c_i \ne 0$ for all $i$. By Lemma \ref{L:facts}, we have: 
\[ 3(d-1) = \sum_{i} d_i,\quad \text{and} \quad d^2-1 = \sum_{i} d_i^2. \]
Since $d_i$ are all positive integers, there are only finitely many possible solutions.
Also, by Lemma \ref{L:facts}, we observe that $0\le m_{ij} \le d_j$ for all $i,j$. It follows that $\mathcal{M}_d$ is a finite set. 
\end{proof}

Suppose $M$ is a $(k+1)\times (k+1)$ matrix and $\bar n=(n_1, \dots, n_k)$ is a $k$-tuple of positive integers. Let $N=\sum_{i=1}^k n_i$. For a subset $I \subset \{1, 2, \dots, k\}$, let $n_I = \sum_{i \in I} n_i$ and let $M_I$ be a principal submatrix of $M$ obtained by deleting the $(i+1)^\text{th}$ column and row for all $i \not\in I$. For example, if $I=\{1,2, \dots,k\}$, then $M_I = M$, and if $I= \emptyset$, then $M_\emptyset$ is an $1\times1$ matrix whose single entry is the $(1,1)$ entry of $M$. 

\vspace{1ex}
Let $M_t$ be a matrix obtained from $M$ by subtracting $t$ from the $(1,1)$ entry. 
Then, for $I \subset \{1, \dots, k\}$, $M_{t,I}$ is a principal submatrix of $M_t$ obtained by deleting the $(i+1)^\text{th}$ column and row for all $i \not\in I$ and the determinant of $M_{t,I}$ is a degree one polynomial in $t$. 
Let us define a polynomial:
\begin{equation}\label{E:charpoly}
\chi_{M, \bar n} (t) := \sum_{I \subset \{1, \dots, k\}} (-1)^{|I|-1}\, t^{\sum_{i \not\in I} n_i}\, \det M_{t,I}.
\end{equation}

Let $I(\bar n) = \{ i+j : 1 \le j \le n_i, i=1, \dots, k\}$, $N= \sum_{i=1}^k n_i$, and let  $M(\bar n)$ be a $(N+k+1)\times (N+k+1)$ matrix such that 
\begin{itemize}
\item A sub-matrix of $M(\bar n)$ obtained by removing the $i^\text{th}$ rows and $i^\text{th}$ columns for $i \in I(\bar n)$ is identical to $M$, and
\item for $i \in I(\bar n)$, the $i^\text{th}$ column of $M(\bar n)$ has $1$ in the $i+1^\text{th}$ entry and $0$ otherwise.
\end{itemize}
Notice that this matrix $M(\bar n)$ corresponds to the matrix representation of $\omega$ with the orbit data $\mathcal{O}(\omega) = (M, \bar n)$.

\begin{lem}\label{L:charpoly} With the above settings, the characteristic polynomial of $M(\bar n)$ is given by $\chi_{M, \bar n}(t)$ as defined in (\ref{E:charpoly}).
\end{lem}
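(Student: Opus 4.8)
The plan is to compute $\det(tI - M(\bar n))$ by cofactor expansion along the columns indexed by $I(\bar n)$, i.e. the columns of $M(\bar n)$ that carry the ``shift'' structure $e_{i,j}\mapsto e_{i,j+1}$. Each such column has a single nonzero entry, namely a $1$ in the row immediately below (the $i+1$st entry), so in $tI - M(\bar n)$ it has $t$ on the diagonal and $-1$ one step down. The key observation is that expanding along these columns forces a combinatorial choice: for each cycle of length $n_i$, when we march through its $n_i$ shift-columns we may either pick up the diagonal entry $t$ (contributing a factor $t$ and leaving that row/column to be dealt with later) or pick up the off-diagonal $-1$ (which ``uses up'' the corresponding row and effectively collapses the chain onto the curve class $e_{i,n_i}$ that feeds back into the $M$-block). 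The net effect of making the $-1$ choice somewhere along chain $i$ is to include index $i$ in the set $I$, whereas making the $t$ choice everywhere along chain $i$ contributes an overall $t^{n_i}$ and, crucially, kills the $i$th row/column of the $M$-block (those rows become part of an independent Jordan-type nilpotent string). This is exactly the structure encoded by $M_I$: deleting the $i+1$st row and column for $i\notin I$.

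The steps, in order, are: (1) set up the ordered basis so that $M(\bar n)$ is block-structured with the $M$-block on the distinguished coordinates $e_0, e_{n_0+1},\dots,e_{n_{k-1}+1}$ and nilpotent shift blocks elsewhere; (2) prove a one-chain lemma: for a single cycle of length $n$ attached to the $M$-block at one entry, Laplace expansion along its $n$ shift-columns yields $t^{n}$ times (the determinant with that coordinate's row/column deleted from $M_t$) minus (a term that, upon tracking the sign, reintroduces the coordinate but shifts the relevant matrix entry — this is where the $(-1)^{|I|-1}$ sign and the power $t^{N-n_I}$ must be reconciled); (3) iterate over the $k$ chains, observing the choices are independent, so the total determinant is a sum over subsets $I\subseteq\{1,\dots,k\}$ of the chains where we made the ``$-1$'' choice, each contributing $(-1)^{|I|-1} t^{N - n_I}\det M_I$; (4) match the result term-by-term with the definition \eqref{E:charpoly} of $\chi_{M,\bar n}(t)$. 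Step (2)–(3) is really just careful bookkeeping of the cofactor signs: each time the $-1$ in a shift column is selected, it contributes a sign, and permuting the deleted rows/columns back into standard position contributes further signs; these must combine to give precisely $(-1)^{|I|-1}$ overall (the $-1$ in the exponent, rather than $+1$, coming from the fact that the $e_0$ row/column is always retained and shifts the parity by one).

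Alternatively — and this may be cleaner to write — one can avoid the raw cofactor expansion by using the identity $\det(tI - M(\bar n)) = \det(tI - M_t')\cdot(\text{something})$ via the ``Schur complement / pushing'' trick: the shift blocks let one successively eliminate the variables $e_{i,j}$ for $j<n_i$, each elimination multiplying a row by $t$ and folding it into the next, so that after $n_i$ steps the entire chain is replaced by a single modified entry $t^{n_i}$ feeding the $M$-block, at the cost of an overall factor. Carrying this out on $tI - M(\bar n)$ reduces it to a $(k+1)\times(k+1)$ determinant whose entries are the original ones except the feedback entries are scaled by powers of $t$; expanding that small determinant via multilinearity in the $k$ ``feedback'' columns then produces exactly the subset sum \eqref{E:charpoly}. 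Either route works; the main obstacle in both is not conceptual but notational: pinning down the exact sign and the exact power of $t$ attached to each subset $I$, and in particular verifying that the exponent is $N - n_I$ (not, say, $N + k - |I| - n_I$ or similar) and the sign is $(-1)^{|I|-1}$ rather than $(-1)^{|I|}$. I would handle this by first doing the case $k=1$ completely explicitly (a single chain), then $k=2$, to calibrate the signs, and only then state the general induction.
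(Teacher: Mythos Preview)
Your proposal is correct and follows essentially the same route as the paper: cofactor expansion along the shift columns, calibrated first on $k=1$ and $k=2$ before asserting the general case by the same mechanism. The paper's proof is terser and works with $\det(M(\bar n)-tI)$ rather than $\det(tI-M(\bar n))$, but the block-by-block expansion using the $-t$'s and $1$'s in each chain is identical to your primary plan; your Schur-complement alternative is not used in the paper but would also work.
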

 
 \begin{proof}
Suppose $k=1$ and $n_1=1$. Then we have \[ \omega - t I = \begin{bmatrix} d-t&d_1\\ -c_1& -m_{1,1}-t \end{bmatrix} \]
and thus \[ \det(\omega-t I) = -t (d-t) + \det \begin{bmatrix} d-t&d_1\\ -c_1& -m_{1,1} \end{bmatrix}. \]
If $n_1=n>1$, then we have:
\[ \omega - t I = \begin{bmatrix} d-t&\ 0\ &\ 0\ & &&&d_1\\ -c_1&-t &\  0\ & &&&-m_{1,1}\\ \ 0\ &\ 1\ &-t& &&&\ 0\ \\ \ 0\ &\ 0\ &\ 1\  & -t  &&& \\ \ 0\ &\ 0\ &\ 0\  &\ 1\  &-t&&\\ \ 0\ &  & &&&&\\  \ 0\ &  & &&\,\ddots&\,\ddots & \\ \\  \ 0\ & &&\ \ &&\ \ \ 1&-t\end{bmatrix}. \]
Then, the principal submatrix obtained by deleting the $2^\text{nd}$ column and the $2^\text{nd}$ row has a row with a single element $-t$ in the diagonal, and thus, using minors corresponding to $-t$ in such a row, we see that the cofactor corresponding to $(2,2)$-entry is $(-t)^{n-1} (d-t)$. Similarly, we use minors corresponding to a single $1$ in a row for the minor of $(2,3)$-entry. Then we see that the cofactor corresponding to $(2,3)$-entry is $(-1)^{n-1} \det \begin{bmatrix} d-t&d_1\\ -c_1& -m_{1,1} \end{bmatrix}$. It follows that 
 \[ \det(\omega-t I) = (-t)^{n} (d-t) + (-1)^{n-1} \det \begin{bmatrix} d-t&d_1\\ -c_1& -m_{1,1} \end{bmatrix}. \]
 Suppose $k=2$, and $n_1, n_2$ are two strictly positive integers. Then
  \[ \omega - t I = \begin{bmatrix} d-t&|&0&&d_1&|&&&d_2\\-&-&-&-&-&-&-&-&-\\ -c_1&|&-t &  &-m_{1,1}&|&&&-m_{1,2}\\ 0&|&1&\ddots&0 &|& &&\\ 0&|&&1 & -t&|&&&\\-&-&-&-&-&-&-&-&-\\ -c2&|& & &-m_{2,1}&|&-t& & -m_{2,2}\\&|&&&&|&1&\ddots& 0\\&|&  & &&|& &1& -t\end{bmatrix} \]
The $(n_1+1)^\text{th}$ to $(n_1+n_2)^\text{th}$ columns have exactly two non-zero entries: $(-t)$ in the sub-diagonal and $1$ below.  Using these columns for the cofactor expansion, we have:
  \begin{equation}\label{E:nasty}\begin{aligned}
 \det (\omega-t I)  &= (-t)^{n_2} \det \begin{bmatrix} d-t&0& &d_1\\ -c_1&-t &  &-m_{1,1} \\ 0& 1  &\ddots & 0\\ 0& &1&-t\end{bmatrix}\\&+(-1)^{n_2-1}  \det \begin{bmatrix} d-t&0& &d_1&d_2\\ -c_1&-t &  &-m_{1,1}&-m_{1,2} \\ 0& 1  &\ddots & 0&0\\ 0& &1&-t&0\\ -c_2 & & &-m_{2,1}&-m_{2,2}\end{bmatrix}\end{aligned}\end{equation}
  For the second term on the right-hand side of the above equation, we use, again, $(-t)$'s and $1$'s for the cofactor expansion, as in the case with $k=1$. Then we see the second term is equal to \begin{equation}\label{E:nasty2} (-1)^{n_2-1}  \left ( (-t)^{n_1}\det  \begin{bmatrix} d-t&d_2\\-c_2&-m_{2,2} \end{bmatrix} +(-1)^{n_1-1} \det\begin{bmatrix} d-t&d_1&d_2\\ -c_1&-m_{1,1}&-m_{1,2}\\-c_2&-m_{2,1}&-m_{2,2} \end{bmatrix}. \right)\end{equation}
Combining equations (\ref{E:nasty}) and (\ref{E:nasty2}), we see that the characteristic polynomial of $\omega$ is given by the formula in (\ref{E:charpoly}).
For $k\ge 3$, we work with cofactor expansions using $(-t)$'s and $1$'s for each block. The computation is essentially identical. 
\end{proof}



\begin{thm}\label{T:charpoly}
Suppose the orbit data of $\omega$ is given by $\mathcal{O}(\omega)=(M=\begin{bmatrix} d&\bar d\\-\bar c&-m\end{bmatrix}, \bar n=(n_1, \dots, n_k))$. Let $\omega _c$ and $\omega_s$ be elements of $W_\infty$ defiend in (\ref{E:wc}) and (\ref{E:ws}). Then,
the characteristic polynomial of $\omega_c \cdot \omega_s$ is the polynomial $\chi_{\mathcal{O}(\omega)}(t)$ defined in (\ref{E:charpoly}). Thus, the spectral radius of $\omega$ is the largest real root of $\chi_{\mathcal{O}(\omega)}(t)$.
\end{thm}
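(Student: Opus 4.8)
\emph{The plan} is to recognize the matrix of $\omega_c\cdot\omega_s$ as the matrix $M(\bar n)$ built just before Lemma \ref{L:charpoly} and then quote that lemma. Following the convention of this subsection we may take $\omega_p=\rho=\mathrm{id}$ in Proposition \ref{P:decom}, so that $\omega=\omega_c\cdot\omega_s$ and $\mathcal{O}(\omega)=(M,\bar n)=(\tilde\omega_c,\,n(\omega))$; write $d=\deg\omega$, $\bar n=(n_1,\dots,n_k)$ and $N=n_1+\cdots+n_k$, and note that $\omega$ permutes $e_1,\dots,e_N$ within $k$ cycles of lengths $n_1,\dots,n_k$. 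Using (\ref{E:wc}) and (\ref{E:ws}) I would first compute $\omega_c\omega_s$ on the ordered basis $(e_0,e_1,\dots,e_N)$: inside the $i$-th cycle of $\omega_s$ it acts as the shift $e_j\mapsto e_{j+1}$ on every non-terminal index $j$, on the terminal vector it gives $\omega_c\omega_s(e_{n_i})=\omega e_{n_i}$, and $\omega_c\omega_s(e_0)=\omega e_0$. The one point needing an argument is that $\omega e_0$ and the $\omega e_{n_i}$ involve only $e_0$ together with the cycle-initial vectors $e_1,e_{n_1+1},\dots,e_{n_{k-1}+1}$ (equivalently, that outside the submatrix $\tilde\omega_c$ the columns of $\omega_c$ are those of the identity): if $l$ is neither $0$ nor cycle-initial then $\omega e_{l-1}=e_l$, hence $\omega^{-1}e_l=e_{l-1}$ (also immediate from Lemma \ref{L:facts}(6)), so $\omega e_0\cdot e_l=e_0\cdot e_{l-1}=0$ and $\omega e_{n_i}\cdot e_l=e_{n_i}\cdot e_{l-1}=0$. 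Thus, after reordering the basis so that the cycle-initial vectors immediately follow $e_0$, the matrix of $\omega_c\omega_s$ is $\tilde\omega_c=M$ with the shift blocks of lengths $n_1,\dots,n_k$ of $\omega_s$ inserted, i.e.\ the matrix $M(\bar n)$ of Lemma \ref{L:charpoly} (this is the content of the Remark following that lemma).

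Granting this, the remaining steps are routine. By Lemma \ref{L:charpoly} the characteristic polynomial of $M(\bar n)$, hence of $\omega_c\cdot\omega_s$, is $\chi_{M,\bar n}(t)=\chi_{\mathcal{O}(\omega)}(t)$. Monicity is read off (\ref{E:charpoly}): the summand $I=\varnothing$ equals $(-1)^{-1}\,t^{N}(d-t)=t^{N+1}-d\,t^{N}$, while for $I\neq\varnothing$ the cofactor $\det M_I$ has degree $\le 1$ in $t$ and $n_I\ge 1$, so that summand has degree $\le N$; hence $\chi_{\mathcal{O}(\omega)}$ is monic of degree $N+1$. Finally, Proposition \ref{P:decom} gives $\lambda(\omega)=\lambda(\omega_c\cdot\omega_s)$, which equals the spectral radius of $M(\bar n)$; since $\omega_c\cdot\omega_s\in W_n$, McMullen's theorem (Section \ref{SS:wdeg}) makes $\chi_{\mathcal{O}(\omega)}$ a product of cyclotomic polynomials times at most one Salem polynomial. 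In the first case the spectral radius is $1$, which is a root of $\chi_{\mathcal{O}(\omega)}$ because $\omega_c\cdot\omega_s$ fixes $\kappa_n$; in the second the Salem number is the unique root of modulus $>1$ and is real. Either way the largest real root of $\chi_{\mathcal{O}(\omega)}(t)$ equals the spectral radius of $\omega$, which is the assertion.

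\emph{The main obstacle} is the matrix identification of the first paragraph. Since $M(\bar n)$ was built precisely for this, the genuine content there is the orthogonality observation that the two ``interesting'' columns $\omega e_0$ and $\omega e_{n_i}$ carry no entry along an interior basis vector, together with a careful tracking of how the cyclic shifts of $\omega_s$ interleave with the non-identity columns of $\omega_c$.
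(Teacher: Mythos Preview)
Your proposal is correct and follows the same approach as the paper: identify the matrix of $\omega_c\cdot\omega_s$ with $M(\bar n)$ and invoke Lemma~\ref{L:charpoly}. The paper's own proof is a single sentence relying on the remark that $M(\bar n)$ was defined precisely to be this matrix, whereas you supply the orthogonality verification, the monicity check, and the ``largest real root'' justification that the paper leaves implicit.
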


\begin{proof}
Since the matrix representation of $\omega_c\cdot \omega_s$ is given by $M(\bar n)$, this theorem follows from Lemma \ref{L:charpoly}.
\end{proof}

Let $f\in\bptwo$ be a birational map with the orbit data $\mathcal{O}(f)=(M, \bar n)$. Then there exists a birational map $F$ on a rational surface $X$ such that $F$ is an algebraically stable modification of $f$ with the induced action of $F$ on $\pic(X)$ given by $M(\bar n)$. Thus, we have:

\begin{cor}
Let $f\in\bptwo$ be a birational map with the orbit data $\mathcal{O}(f)=(M, \bar n)$. Then, the dynamical degree of $f$ is the largest real root of $\chi_{M, \bar n}$ defined in (\ref{E:charpoly}).
\end{cor}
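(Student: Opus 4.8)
The plan is to chain together three facts that are already in place: the structure theorem of Diller--Favre producing an algebraically stable model, the description of the induced linear action $F_*$ in terms of orbit data, and the characteristic polynomial formula of Theorem~\ref{T:charpoly}. First I would recall that, given $f\in\bptwo$ with orbit data $\mathcal{O}(f)=(M,\bar n)$, the construction in Section~\ref{S:birational} (the sequence of blowups $\pi:X=X_k\to\cdots\to X_0=\mathbf{P}^2(\mathbb{C})$ along the orbits $\mathcal{O}_i$) yields a rational surface $X$ and a lift $F:X\dasharrow X$ which, by Diller--Favre \cite{Diller-Favre:2001}, is algebraically stable. Algebraic stability gives $(F^n)_*=(F_*)^n$ for all $n$, so $\lambda(f)=\lambda(F)=\rho(F_*)$, the last equality being exactly the remark that for an algebraically stable map the dynamical degree equals the spectral radius of the induced action on $\pic(X)$.

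Next I would identify the matrix of $F_*$. Equations (\ref{E:fstar1})--(\ref{E:fstar3}) describe $F_*$ completely on the ordered basis $\langle e_0, e_{1,1},\dots,e_{1,n_1},e_{2,1},\dots,e_{k,n_k}\rangle$: on the $e_{i,j}$ with $j<n_i$ it is the shift $e_{i,j}\mapsto e_{i,j+1}$, while the columns for $e_0$ and $e_{i,n_i}$ are governed by the data $d,c_i,d_i,m_{j,i}$ that make up $M$. Comparing this with the definition of $M(\bar n)$ (the $(N+k+1)\times(N+k+1)$ matrix whose non-shift columns reproduce $M$ and whose shift columns carry a single $1$ on the subdiagonal), one sees these are the same matrix up to the agreed-upon labelling of basis vectors; this is precisely the content of the sentence preceding the corollary and of the remark comparing the two notions of orbit data. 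Hence the matrix of $F_*$ is (conjugate to) $M(\bar n)$.

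Finally, Lemma~\ref{L:charpoly} (equivalently Theorem~\ref{T:charpoly}) computes the characteristic polynomial of $M(\bar n)$ to be $\chi_{M,\bar n}(t)$ as defined in (\ref{E:charpoly}). Since $\lambda(f)=\rho(F_*)$ and $\rho(F_*)$ is the spectral radius of a matrix whose characteristic polynomial is $\chi_{M,\bar n}$, and since by Theorem~\ref{T:df} the dynamical degree $\lambda(f)$ (when $>1$) is a Salem or Pisot number, hence a real root of this polynomial that dominates all other roots in modulus, we conclude $\lambda(f)$ is the largest real root of $\chi_{M,\bar n}$. When $\lambda(f)=1$ the statement is vacuous or trivial. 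This completes the argument.

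I expect the only genuine subtlety to be the bookkeeping in the second step: verifying carefully that the matrix arising from (\ref{E:fstar1})--(\ref{E:fstar3}) really coincides with $M(\bar n)$ under the correspondence $e_{i,j}\leftrightarrow$ the appropriate standard basis vector, i.e.\ that the indexing convention $I(\bar n)$ used to define $M(\bar n)$ matches the block structure of the blown-up Picard lattice. Everything else is a direct invocation of results already proved. One should also note in passing that the largest real root of $\chi_{M,\bar n}$ is a well-defined real number $\ge 1$ in all the relevant cases, which follows from the classification of characteristic polynomials of Coxeter elements as products of a Salem polynomial with cyclotomic factors; but for the corollary as stated it suffices to know it equals $\rho(F_*)$.
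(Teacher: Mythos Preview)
Your proposal is correct and follows essentially the same approach as the paper: pass to the Diller--Favre algebraically stable model $F$, identify the matrix of $F_*$ with $M(\bar n)$ via equations (\ref{E:fstar1})--(\ref{E:fstar3}), and then invoke Lemma~\ref{L:charpoly}/Theorem~\ref{T:charpoly} to read off the characteristic polynomial. The paper states this in a single sentence immediately preceding the corollary; your version simply unpacks the same chain of implications with more detail.
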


\begin{eg}\label{E:noJ} Let $\omega \in \Omega_3$ given by 
\begin{equation}
\omega \ :\ \left\{ \begin{aligned} &e_0 \ \mapsto 3 e_0 -2 e_1-e_4-e_6-e_{10}-e_{11}, \\ & e_1 \mapsto e_2\mapsto e_3 \mapsto e_0-e_1 - e_6,\\ & e_4\mapsto e_5 \mapsto 2 e_0 - e_1-e_4-e_6-e_{10}-e_{11},\\ &e_6 \mapsto e_7 \mapsto e_ 8 \mapsto e_9 \mapsto e_0- e_1 - e_{10}, \\ & e_{10} \mapsto e_0 - e_1 - e_4,\\& e_{11} \mapsto e_{12} \mapsto e_0-e_1- e_{11}. \\ \end{aligned} \right.
\end{equation}
Then we have $k=5$, $n(\omega) =( 3,2,4,1,2)$,
\[M= \begin{bmatrix} d&\bar d\,\\-\bar c&*\,\end{bmatrix} \ =\ \begin{bmatrix} \ 3 & \ 1&\ 2&\ 1 &\ 1&\ 1\\ -2& -1&-1&-1&-1&-1 \\ -1 &\ 0 &-1&\ 0&-1&\ 0\\-1&-1&-1&\ 0&\ 0&\ 0\\-1&\ 0&\ -1&\ -1&\ 0&\ 0 \\ -1&\ 0&-1&\ 0&\ 0&-1\\ \end{bmatrix}. \] 
For $I\subset \{1, \dots, 5\}$, 
\[ M_{t,\emptyset} \ =\  \begin{bmatrix} 3-t \end{bmatrix}, \ \ \  M_{t,\{1\}} \ =\    \begin{bmatrix} 3-t &1\\-2&-1\end{bmatrix}, \ \ M_{t,\{2\}}  \ =\    \begin{bmatrix} 3-t &2\\-1&-1\end{bmatrix} , \ \ M_{t,\{3\}} \ =\    \begin{bmatrix} 3-t &1\\-1&0\end{bmatrix}, \cdots \]   \[ M_{t,\{1,2\}} \ =\    \begin{bmatrix} 3-t &1&2\\-2&-1&-1\\-1&0&-1 \\\end{bmatrix},\ \ M_{t,\{1,3\}} \ =\    \begin{bmatrix} 3-t &1&1\\-2&-1&-1\\-1&-1&0 \\\end{bmatrix}, \ \ \dots \ , \ \ M_{t,\{1, 2, \dots, k\}} = M_t.\]
Using (\ref{E:charpoly}), we see that the characteristic polynomial of $\omega$ is given by: 
\[ \chi(t) =  (t^{12}-2 t^{11} + t^{10} - 2 t^9+t^7-t^6+t^5-2 t^3+t^2-2 t+1) (t-1) \]
and the spectral radius is $1.96683..$, the largest real root of $\chi(t)$. 
\end{eg}

\subsection{Special Case.} 
Blanc and Furter \cite{Blanc-Furter:2019} considered the subgroup $J_q, q\ne 0$ of $W_\infty$ consisting of elements $\omega$ such that
\begin{enumerate}
\item $\omega( e_0 - e_q) = e_0 - e_q$, and 
\item there exists a birational map $f: \ppc\dasharrow \ppc$ with $\mathcal{O}(f) = \mathcal{O}( \omega)$. 
\end{enumerate}
This group $J_q$ corresponds to the group of Jonqui\`{e}res transformations preserving the pencil of lines passing through a point in $\mathbf{P}^2(\mathbb{C})$.

A birational map $f: \ppc \dasharrow \ppc$ with orbit data $\mathcal{O}(f) =( M, \bar n)$ where 
\begin{equation}\label{E:jon}M=  \begin{bmatrix} d& d-1&1& \cdots &1\\ -(d-1) & -(d-2)&-1 & \cdots &-1\\ -1 &-1 & &&\\ \vdots & \vdots& &- Id& \\ -1&-1 & & \end{bmatrix} \end{equation}
is  the homaloidal type of a Jonqui\`{e}res element \cite{Blanc-Furter:2019}. For each $(2d-1)$-tuple $\bar n=(2, n_2, \dots, n_{2d-1})$ of positive integers in $\mathbb{N}^{2d-1}$, Bot \cite{Bot:2024} constructed a Jonqui\`{e}res transformation with orbit data $(M, \bar n)$.

\vspace{1ex}
In this section, we apply our formula in (\ref{E:charpoly}) to a larger subgroup $\Omega \subset W_\infty$ defined by: 
\[ \Omega =\{ \omega \in W_\infty : \mathcal{O} (\omega) = (\omega_d \cdot \sigma, \bar n),  \sigma \in \langle s_1, \dots, s_{2d-2} \rangle, \bar n \in \mathbb{N}^{2d-1}, d \ge 2 \}, \] where
\[ \omega_d = \kappa_{1,3,2}\cdot \kappa_{1,5,4}\cdot \kappa_{1, 7,6} \cdots \kappa_{1, 2d-1, 2d-2}, \]
and $\kappa_{i,j,k} \in W_\infty$ is the reflection through the vector $e_0 - e_i-e_j-e_k$.
\vspace{1ex}
For any $d\ge 2$, we see that  $ \omega_d =M $ defined in (\ref{E:jon}),
and \[\mathcal{O}(\omega_d) = (\omega_d, \bar n = (1, 1, \dots, 1)).\]   Let $\sigma \in \langle s_1, \dots, s_{2d-2}\rangle$ be a permutation in $\{1,2, \dots,2d-1\}$ . Then, we have:  \[\mathcal{O}(\omega_d \cdot \sigma) = (\omega_d\cdot \sigma, \bar n = (1, 1, \dots, 1)).\] 
Note that if $\omega \in \Omega$, $\sigma = Id$, and $n_1 = q\ge 1$, then $\omega(e_0-e_q ) = e_0-e_1$. 

\vspace{1ex}
For any $\omega \in \Omega$, the corresponding $M_{t,I}$'s in (\ref{E:charpoly}) and their determinants are relatively simple. Due to these simplicities, the Salem polynomials for the spectra radii can be obtained using linear algebra. To avoid tedious computations, we will describe the Salem polynomials appearing in $\Omega$. 

\vspace{1ex}
Suppose $d\ge 2$ is a positive integer and $\sigma \in \langle s_1, \dots, s_{2d-2}\rangle$ is a permutation in $\{1, 2, \dots, 2d-1\}$.
For each subset $I \subset \{1,2, \dots, 2d-1\}$,
let us define $\sigma^{-1}(I) = \{ i : \sigma(i) \in I\}$ and 
\[ \rho(I) \ = \ \left \{ \begin{aligned} \ &5 \qquad \text{if  } I = \sigma^{-1}(I), \text{ and } 1 \in I, \\ &4 \qquad \text{if  } |I \setminus \sigma^{-1}(I)| =1, \text{ and } 1 \in I \cap \sigma^{-1}(I), \\&3 \qquad \text{if  } |I \setminus \sigma^{-1}(I)| =1, \text{ and } 1 \in I  \cup \sigma^{-1}(I) , 1 \in I\cap \sigma^{-1}(I), \\&2 \qquad \text{if  } I= \sigma^{-1}(I), \text{ and } 1 \not\in I \cup \sigma^{-1}(I), \\&1 \qquad \text{if  } |I \setminus \sigma^{-1}(I)| =1, \text{ and } 1 \not\in I \cup \sigma^{-1}(I), \\ & 0 \qquad \qquad \text{otherwise}. \end{aligned} \right. \]
Also, let \[ \text{Sgn}(I) = \left\{ \begin{aligned} \ \ &+ \qquad \text{if  }\sigma|_I \text{ is given by a product of an even number of transpositions }, \\ 
&- \qquad \text{if  }\sigma|_I \text{ is given by a product of an odd number of transpositions. }\\  \end{aligned} \right. \]
Here, $\sigma|_I$ is a permutation such that $\sigma|_I(i) = \sigma(i)$ for $i \in I$ and $\sigma|_I(i) =i$ otherwise. 
Finally, let $n = |I|$ and let us define
\[ \psi(I) \ = \ \left\{ \begin{aligned}  \ \ & Sgn(I) ( (n+1-d) t -1) \qquad \ \ \text{  if  } \rho(I) = 5, \\& Sgn(I) t \qquad \qquad  \qquad \qquad  \qquad\text{  if  } \rho(I) = 4, \\& -Sgn(I) (t-1) \qquad \qquad \qquad\  \text{  if  } \rho(I) = 3, \\& -Sgn(I) (t-d+n) \qquad\  \  \qquad  \text{  if  } \rho(I) = 2, \\& -Sgn(I) \qquad  \qquad \qquad \qquad\quad \text{  if  } \rho(I) = 1, \\ & 0 \qquad \qquad \qquad \qquad \qquad \qquad \quad \text{  if  } \rho(I) = 0. \\ \end{aligned} \right. \]

\begin{lem}\label{L:d1s}
Suppose $\omega \in \Omega$ with orbit data $\mathcal{O}(\omega) = ( \omega_d \cdot \sigma, \bar n = (n_1, n_2, \dots, n_{2d-1}) )$. Then, the spectral radius is the largest real root of $\chi(t)$ where \[ \chi(t) = t^N  \left[ (d-t) + \sum_{I \ne \emptyset} \frac{1}{t^I} \psi(I) \right], \]  $N=n_1+n_2+ \cdots + n_{2d-1}$, and  $t^I = t^{ \sum_{i \in I} n_i}$. Furthermore, if $\sigma(1) = 1$ and $n_1=1$, then the spectral radius is equal to $1$ for all choices of $n_i$, $ i\ge 2$.
\end{lem}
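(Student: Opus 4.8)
The plan is to prove the displayed formula for $\chi(t)$ first, for an arbitrary permutation $\sigma$, and then to handle the case $\sigma(1)=1$, $n_1=1$ by a structural argument instead of by the formula. For the formula, I would start from Theorem~\ref{T:charpoly}: with $M=\omega_d\cdot\sigma$ the spectral radius $\lambda(\omega)$ is the largest real root of $\chi_{M,\bar n}(t)$ of (\ref{E:charpoly}), so everything reduces to computing $\det M_I$ for each $I\subset\{1,\dots,2d-1\}$ and checking $(-1)^{|I|}\det M_I=\psi(I)$ for $I\neq\emptyset$ (the $I=\emptyset$ term of (\ref{E:charpoly}) is $-t^{N}(d-t)$, so $\chi_{M,\bar n}$ and the displayed $\chi$ differ only by the harmless overall sign $-1$). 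The computation is feasible because of the rigid shape of $\omega_d$ recorded in the matrix displayed above: the $i$-th column of $M$ is $\omega_d(e_{\sigma(i)})$, which for $\sigma(i)\ge 2$ is the sparse class $e_0-e_1-e_{\sigma(i)}$, so the only columns of $M$ with more than three nonzero entries are those indexed by $e_0$ and by $\sigma^{-1}(1)$, and $t$ occurs only in the $(0,0)$-entry of $M_t$. Hence each $\det M_I$ is affine in $t$, and a cofactor expansion along the sparse columns $i\in I$ with $\sigma(i)\ge 2$ (equivalently a Schur-complement reduction of the principal block on the rows and columns $e_j$, $j\in I\cap\{2,\dots,2d-1\}$) collapses $\det M_I$ to a quantity depending only on $|I|$, on whether $1\in I$, on how the $\sigma$-orbit of the index $1$ meets $I$ (i.e.\ the comparison of $I$ with $\sigma^{-1}(I)$ near $1$), and on the parity of $\sigma|_I$. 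Running through these cases and matching against the definitions of $\rho(I)$, $\text{Sgn}(I)$, $\psi(I)$ yields $(-1)^{|I|}\det M_I=\psi(I)$, with $\det M_I=0$ exactly when $\rho(I)=0$; substituting into (\ref{E:charpoly}) and writing $t^{N-n_I}=t^{N}t^{-n_I}$ gives the claimed $\chi(t)=t^{N}\big[(d-t)+\sum_{I\neq\emptyset}t^{-n_I}\psi(I)\big]$.

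For the last assertion I would avoid the formula. Let $\Phi=\omega_c\cdot\omega_s$ be the algebraically stable model of $\omega$, which by the constructions (\ref{E:wc})--(\ref{E:ws}) lies in $W_N\subset O(\mathbb{Z}^{1,N})$ and satisfies $\lambda(\omega)=\lambda(\Phi)$ by Lemma~\ref{L:essentialpart} and Proposition~\ref{P:decom}. Since $n_1=1$, the first orbit contributes only the class $e_{1,1}$, so $\Phi(e_{1,1})$ is the lift of $\omega_d(e_{\sigma(1)})$; when $\sigma(1)=1$ this equals $(d-1)e_0-(d-2)e_{1,1}-\sum_{j=2}^{2d-1}e_{j,1}$, while $\Phi(e_0)=d\,e_0-(d-1)e_{1,1}-\sum_{j=2}^{2d-1}e_{j,1}$. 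Subtracting, $\Phi(e_0-e_{1,1})=e_0-e_{1,1}$, so $\Phi$ fixes the nonzero class $v:=e_0-e_{1,1}$ with $v\cdot v=1-1=0$. If $\lambda(\Phi)>1$, then by the standard structure of a hyperbolic isometry of a signature $(1,N)$ lattice, $\Phi$ has eigenvectors $v^{+},v^{-}$ for $\lambda(\Phi)$ and $\lambda(\Phi)^{-1}$ spanning a hyperbolic plane; every $\Phi$-fixed vector is orthogonal to $v^{+}$ and to $v^{-}$ (apply $\Phi$ to $v\cdot v^{\pm}$), hence lies in the orthogonal complement of that plane, which is negative definite --- impossible for $v$ with $v\cdot v=0$. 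Therefore $\lambda(\Phi)\le 1$, and since $\lambda(\Phi)\ge 1$ always holds, $\lambda(\omega)=1$ for all choices of $n_2,\dots,n_{2d-1}$.

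The delicate part is the first step: the determinant evaluation is elementary but breaks into many cases, and reconciling the sign $(-1)^{|I|-1}$ in (\ref{E:charpoly}) with $\text{Sgn}(I)$ and with the sign of the affine coefficient of $t$ in $\det M_I$ needs care. I would organize it either as an induction on $|I|$ that strips off one sparse column at a time, or via the Schur-complement reduction above; either way it is of the ``basic linear algebra'' type the text alludes to. The second assertion, by contrast, is immediate once one notices the fixed isotropic class $e_0-e_{1,1}$, which exists precisely because $\sigma$ fixes the index $1$ and the corresponding orbit has length one.
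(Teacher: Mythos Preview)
The paper does not actually supply a proof of this lemma: the sentence immediately preceding it says that the Salem polynomials ``can be obtained using Linear Algebra'' and that the author will ``not board the readers with basic computations.'' Your plan for the displayed formula is exactly the kind of computation the paper is alluding to: start from Theorem~\ref{T:charpoly} and (\ref{E:charpoly}), exploit that the columns of $M=\omega_d\cdot\sigma$ indexed by $i$ with $\sigma(i)\ge 2$ are the sparse vectors $e_0-e_1-e_{\sigma(i)}$ (so only the $e_0$-column and the $\sigma^{-1}(1)$-column are dense, and $t$ appears only in the $(0,0)$-entry), and reduce each $\det M_I$ to an affine-in-$t$ expression determined by $|I|$, the position of $1$ relative to $I$ and $\sigma^{-1}(I)$, and the parity of $\sigma|_I$. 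Your identification $\psi(I)=(-1)^{|I|}\det M_I$ and the resulting overall sign $\chi=-\chi_{M,\bar n}$ are correct. The only caution is bookkeeping: the case split defining $\rho(I)$ in the paper has an apparent typo in the $\rho(I)=3$ line, so when you carry out the match you should be guided by your determinant computation rather than by that displayed condition.

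For the last assertion your structural argument is a genuine (and cleaner) addition to what the paper offers. The observation that $\sigma(1)=1$ together with $n_1=1$ forces the algebraically stable model $\Phi=\omega_c\cdot\omega_s$ to fix the isotropic class $e_0-e_{1,1}$ is correct: from the displayed matrix for $\omega_d$ one has $\omega_d(e_0)-\omega_d(e_1)=e_0-e_1$, and with $n_1=1$ this lifts verbatim to $\Phi(e_0)-\Phi(e_{1,1})=e_0-e_{1,1}$. The hyperbolic-isometry step is standard and valid: if $\lambda(\Phi)>1$ then the $\lambda$- and $\lambda^{-1}$-eigenvectors span a hyperbolic plane whose orthogonal complement is negative definite, so no nonzero fixed isotropic vector can exist. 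This avoids summing the formula over all $I$ and is the right way to see the ``Furthermore'' clause; the paper gives no argument for it at all.
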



\section{Weyl Spectrum vs. Dynamic Spectrum}\label{S:spec}

The dynamical spectrum $\Lambda_\mathbb{C}$ does not contain all Salem numbers, and it is difficult to know all birational maps and their dynamical degrees. However, it is known that the dynamical spectrum is, in fact, the same as the Weyl spectrum, as noted by McMullen \cite{McMullen:2007}, Uehara \cite{Uehara:2010}, and Kim \cite{Kim:2023}. 

Thus, we have:
\begin{prop}\label{P:nosalem}
Suppose $\delta$ is a limit point of $\Lambda_S$. Then either $\delta$ is a Pisot number or $\delta$ is a Salem number in $\Lambda_S$.
\end{prop}

\begin{proof}
Since a limit point of $\Lambda_S$ is either a Pisot number or a Salem number, it is sufficient to consider the case where $\delta$ is a Salem number. 
By Blanc and Canta \cite{blancdynamical}, we know that the dynamical spectrum is closed, and thus $\delta$ is the dynamical degree of a birational map $f$ on $\ppc$. 
Since $\delta$ is a Salem number, a birational map $f$ is equivalent to an automorphism, the induced action on the Picard group is an element $\omega$ of $W_n$ for some $n\ge 10$, and $\delta$ is the spectral radius of $\omega$. 
\end{proof}

\subsection{Monotonicity}\label{SS:mono} In this subsection, we show that for a given $M\in \cup_d \mathcal{M}_d$, the spectral radius of $\omega \in \Omega_d$ is increasing with respect to orbit lengths.

\begin{prop}\label{P:n1}
Suppose $\chi(t)$ is the characteristic polynomial (up to a product of cyclotomic polynomials) of an element $\omega$ with orbit data $\mathcal{O}(\omega)=(M \in \cup \mathcal{M}_d, (n_1,n_2, \dots, n_k))$ defined in (\ref{E:charpoly}). Then, for $i=1, \dots, k$ we have
\[\chi(t) = t^{n_i} \psi_i(t) + \phi_i(t), \quad \text{where}\]
\begin{enumerate}
\item $\psi_i(t)$ and $\phi_i(t)$ are independent of $n_i$,
\item $\psi_i(t)$ is a Pisot polynomial,
\end{enumerate}
Furthermore, the largest real root of $\chi(t)$ approaches a Pisot number as $n_i\to \infty$.  
\end{prop}

\begin{proof}
Without loss of generality, we may assume that $i=1$. 
The terms in  (\ref{E:charpoly}) has $t^{n_1}$ if and only if $1 \not\in I$. Thus, we have:
\[\begin{aligned} \chi(t) \ =\ & \sum_{1 \not\in I}  (-1)^{|I|-1}\, t^{\sum_{i \not\in I} n_i}\, \det M_{t,I} + \sum_{1 \in I}  (-1)^{|I|-1}\, t^{\sum_{i \not\in I} n_i}\, \det M_{t,I}\\\ =\ &  t^{n_1} \sum_{1 \not\in I}  (-1)^{|I|-1}\, t^{N-n_I -n_1}\, \det M_{t,I} + \sum_{1 \in I}  (-1)^{|I|-1}\, t^{N-n_I }\, \det M_{t,I} \end{aligned} \]
where $N=\sum_{i=1, k} n_i$ and $n_I = \sum_{i \in I} n_i$.

Note that if $1 \not\in I$, then $N-n_I -n_1$ is the sum of $n_i$'s for $i\in \{2, \dots, k \} \setminus I$, and if $1 \in I$, then $N-n_I$ is the sum of $n_i$'s for $i\in \{2, \dots, k \} \setminus I$.
By setting \[ \psi_1(t) = \sum_{1 \not\in I}  (-1)^{|I|-1}\, t^{N-n_I -n_1}\, \det M_{t,I} \] and \[\phi_1(t)=\sum_{1 \in I}  (-1)^{|I|-1}\, t^{N-n_I }\, \det M_{t,I},\] we get the first condition. 
From (\ref{E:charpoly}), we see that $\psi_1(t)$ is, in fact $\chi_{M_{\{2, \dots, k\}}, (n_2, \dots, n_k)}$. Let $\lambda_{n_1}$ be the largest real root of $\chi(t)$ and $\lambda'$ be the largest real root of $\psi_1(t)$.
Then, we have: \[ \lambda' = \lim_{n_1 \to \infty} \lambda_{n_1}.\]
It follows that $\lambda'$ is in the Dynamical Spectrum. 

Suppose $\lambda'$ is a Salem number. Since $\chi(t)$ is a reciprocal polynomial for all $n_1 \in \mathbb{N}$, we have $\psi_1(t)=\pm \phi_1(t)$. Using the cofactor expansion as in the proof of Lemma \ref{L:charpoly}, we see that 
\[\begin{aligned} \det (\omega-t I)&\  =\  (-t)^{n_1} \psi_1(t)\\& + (-1)^{n_1-1} \det \left[\begin{array}{c|c|ccc|ccc|ccc} d-t &d_1&0&\cdots & d_2& & \cdots& & & & d_k\\ \hline -c_1 &-m_{1,1}&0&\cdots & -m_{1,2}& & & & & & -m_{1,k}\\ \hline -c_2 &-m_{2,1}&-t& & & & & & & & -m_{2,k}\\  &&1&\ddots & & & & & & & \\   &&&1 & -t& & & & & &\\ \hline  &&& & & & & & & &\\ \vdots&\vdots&& & & &\ddots & & & &\\ &&& & & & & & & &\\ \hline -c_k &-m_{k,1}&& & & & & &-t & & -m_{k,k}\\&&& & & & & &1 &\ddots & \\&&& & & & & & &1 &-t \end{array}\right] .\end{aligned}\] 
Note that $\phi_1(t)$ is the second term in the above equation.
To compute the determinant in the second term, we use a cofactor expansion along the second column. Then, the second term is given by 
\[ (-1)^{n_1-1} ( d_1 A(t) + m_{1,1} \psi(t) + \sum_{j=2}^k m_{j,1} B_j(t))\]
where $A(t)$ is a polynomial of degree $d_A=n_2+ n_3+ \dots+ n_k$ with the leading coefficient $\pm c_1$, and $B_j(t)$ is a polynomial of degree strictly less than the degree of $A(t)$. Since the degree of $\psi_1(t)$ is $1+n_2+ n_3+ \dots+ n_k$, to have $\psi_1(t)=\pm \phi_1(t)$, we need: 
\[ d_1 A(t) + \sum_{j=2}^k m_{j,1} B_j(t)= d_1 c_1 t^{d_A} + \text{ lower order terms} =0 \qquad \forall t.\] It follows that $d_1 c_1 =0$. This is a contradiction because $M \in \mathcal{M}_d$ implies $c_i, d_i \ne 0$ for all $j$ by Proposition \ref{P:nozerosw}.
\end{proof}
\begin{thm}\label{P:mono}
Suppose $\omega$ and $\omega'$ have distinct orbit data $\mathcal{O}(\omega)=(M, (n_1, \dots, n_k))$ and $\mathcal{O}(\omega')=(M, (n'_1, \dots, n'_k))$ respectively. If $n_i \le n'_i$ for all $i=1, \dots, k$, $(n_1, \dots, n_k) \ne (n_1', \dots, n_k')$, and $\lambda(\omega)>1$, then $\lambda(\omega) \lneq \lambda(\omega')$. 
\end{thm}

\begin{proof}
It is sufficient to consider $n_1 < n_1'$ and $n_i= n_i'$ for all $i \ne 1$.
For $\omega$ with $\mathcal{O}(\omega)=(M, (n_1, \dots, n_k))$, ss we have seen in Proposition \ref{P:n1}, the spectral radius of $\omega$ is the largest real root of \[\chi(t) = t^{n_1} \psi(t) + \phi(t), \quad \text{where}\]
\[\psi(t) \ =\   \sum_{1 \not\in I}  (-1)^{|I|-1}\, t^{N-n_I -n_1}\, \det M_{t,I}  , \quad \phi(t) \ =\  \sum_{1 \in I}  (-1)^{|I|-1}\, t^{N-n_I }\, \det M_{t,I}.\]
Consider the system of equations 
\begin{equation}\label{E:system} S(n) : \left \{ \begin{aligned} &y = - \phi(t)/\psi(t),\\& y= t^{n}\\ \end{aligned} \right. . \end{equation}
The spectral radius of $\omega$ is given by the $t$-coordinate of the unique intersection greater than $1$ of $S(n_1)$ defined in (\ref{E:system}).
Notice that both $\phi(t)$ and $\psi(t)$ are independent of $n_1$. By Proposition \ref{P:n1}, we see that $\psi(t)=0$ has a unique real root $t_\star>1$ and $\phi(t)=0$ has no real root $>1$. Thus, we have: \[ \lim_{t \nearrow t_\star} -\frac{\phi(t)}{\psi(t)} = +\infty. \]
Let $\lambda_n$ be the unique solution of $S(n)$ in $\{ t>1\}$.
Since $t^{n_1'} > t^{n_1}$ for all $t>1$ and $\lambda_n < t_\star$, if $\lambda_{n_1'}< \lambda_{n_1}$ then $y = - \phi(t)/\psi(t)$ must intersect $t^{n_1'}$ for some $t \in ( \lambda_{n_1}, t_\star)$. In other words, the system $S(n_1')$ has more than one solution. It follows that the $t$-coordinate of the unique solution $>1$ in  (\ref{E:system}) increases with $n_1$.
\end{proof}

\begin{proof}[Proof of Theorem B]
Suppose $f$ is a birational map on $\mathbf{P}^2(\mathbb{C})$, and $F$ is an algebraically stable modification of $f$ on a rational surface $X$. 
Note that the orbit data of $f$ are determined by the induced action, $F_*$, on $\text{Pic}(X)$. 
If $F$ is an automorphism, we have $F_* \in W_\infty$. Thus, combining Theorem \ref{P:mono} and Proposition \ref{P:pisot}, we have the desired conclusion. 
\end{proof}

\subsection{Limits of Salem numbers in $\Lambda_S$}\label{SS:limit}
Suppose $I \subset \{1, \dots, k\}$ and $n_\star$ is a positive integer. Let $\bar n= (n_1, \dots, n_k)$ be a $k$-tuple with $n_i = n_\star$ for $i \in I$. 
Suppose $\chi(t)$ is the characteristic polynomial (up to a product of cyclotomic polynomials) of an element with orbit data $(M=\begin{bmatrix} d&\bar d\\-\bar c&-m \end{bmatrix}, \bar n=(n_1,n_2, \dots, n_k))$ defined in (\ref{E:charpoly}), and let $\lambda_{M,\bar n}$ be the largest real root of $\chi(t)$. 

Recall that $M_I$ is the principal submatrix of $M$ obtained by deleting the $i+1^\text{th}$ column and row for all $i \not\in I$.
By sorting subsets of $\{1, \dots, k\}$ based on the size of their intersection with $I$,  we see that the characteristic polynomial defined in (\ref{E:charpoly}) becomes:
\[ \chi(t) = \sum_{j=0}^{|I|} t^{(|I|-j) \cdot n_\star } \psi_j (t), \] where
\begin{equation}\label{E:psi}
 \psi_j(t) \ =\  \sum_{|I \cap J|=j }(-1)^{|J|-1} t^{N-n_J-(|I|-j)  \cdot n_\star} \det M_{t,J}, \quad N=n_1+n_2+ \cdots +n_k, \ \ n_J = \sum_{\ell \in J} n_\ell. 
 \end{equation}
 Since $|I \cap J|= j$, a index set $J$ has exactly $j$ $n_\star$. Thus, no term in $N-n_J-(|I|-j)  \cdot n_\star$ is equal to $n_\star$.
Also, let us set $\chi_I (t)$ be the polynomial corresponding to $(M_I, \tilde n)$ where $\tilde n$ is $(k-|I|)$-tuple consists of $n_i, i \not\in I$ defined in (\ref{E:charpoly}). Then, we see that $\psi_{0}(t) =\pm \chi_I(t)$. For example, suppose $\bar n=(n_1, n_2, n_\star, n_\star, n_\star)$. Then $I=\{3,4,5\}$, $\tilde n=(n_1, n_2)$, and we have:
\[ \begin{aligned} \psi_0(t) \ =\ & (-1)^{-1} t^{n_1+n_2} \det M_{t, \emptyset} \\&+ (-1)^0 t^{n_1} \det M_{t,\{n_2\}}+ (-1)^0 t^{n_2} \det M_{t,\{n_1\}}\\&+ (-1)^1  \det M_{t,\{n_1,n_2\}}.\\ \end{aligned}\]
Furthermore, since $\chi(t)$ is a self-reciprocal polynomial for all $n_\star \in \mathbb{N}$ and all polynomials $\psi_{j}(t)$, $j=0, \dots, |I|$ are independent of $n_\star$, we see that: 
\[ \psi_{|I|-j }(t) = \pm t^{\deg \psi_j(t)} \psi_j(1/t),\quad j=0, \dots, |I|. \]
By letting $n_\star \to \infty$, we get:
\[ \lim_{n_\star \to \infty} \lambda_{M, \bar n} = \lambda_P,\]
where $\lambda_P$ is the largest root of $\psi_{0}(t)$. By Blanc and Cantat \cite{blancdynamical}, we see that $\lambda_P$ is the unique largest real root $>1$ of $\psi_{0}(t)$.

\begin{prop}\label{P:pisot}
Suppose $\chi(t)$ is the characteristic polynomial (up to a product of cyclotomic polynomials) of an element with orbit data $(M \in \cup \mathcal{M}_d, (n_1,n_2, \dots, n_k))$ defined in (\ref{E:charpoly}). Let $\lambda_{(n_1, n_2, \dots, n_k)}$ be the largest real root of $\chi(t)$. 
Fix a proper nonempty subset $I \subset \{1, \dots, k\}$ and let the polynomial $\psi_{0}(t)$ defined in (\ref{E:psi}). Then we have 
\[ \lim_{n_i \to \infty, i \in I} \lambda_{(n_1, n_2, \dots, n_k)} = \lambda_P,\]
where $\lambda_P$ is the largest real root of $\psi_{0}(t)$. In fact, $\lambda_P$ is a Pisot number, and $\lambda_{(n_1, n_2, \dots, n_k)}$ approaches to $\lambda_P$ from below.
\end{prop}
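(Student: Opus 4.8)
The plan is to view $\chi(t)=\chi_{M,\bar n}(t)$ of \eqref{E:charpoly} as a one‑parameter family in the entries $n_i$, $i\in I$, and to read off its ``Newton limit''. Set $N_I=\sum_{i\in I}n_i$, $N_{I^c}=\sum_{i\notin I}n_i$, and for $J\subseteq\{1,\dots,k\}$ write $J'=J\cap I$, $J''=J\cap I^c$, so that $N-n_J=(N_I-n_{J'})+(N_{I^c}-n_{J''})$. Substituting this into \eqref{E:charpoly} and dividing by $t^{N_I}$ separates the terms with $J'=\emptyset$ from those with $J'\neq\emptyset$:
\[
\frac{\chi(t)}{t^{N_I}}\;=\;\underbrace{\sum_{J\subseteq I^c}(-1)^{|J|-1}t^{\,N_{I^c}-n_{J}}\det M_{J}}_{=\ \psi_{|I|}(t)}\;+\;\sum_{J:\,J\cap I\neq\emptyset}(-1)^{|J|-1}t^{-n_{J'}}\,t^{\,N_{I^c}-n_{J''}}\det M_{J}.
\]
The first sum is the polynomial $\psi_{|I|}(t)$ of \eqref{E:psi}, i.e. $\chi_I(t)$, and it does not involve the $n_i$, $i\in I$; denote the second sum by $R(t)$. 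Since $\det M_J$ is affine in $t$, each term of $R(t)$ is a fixed polynomial of $t$‑degree $\le N_{I^c}+1$ times $t^{-n_{J'}}$ with $n_{J'}\ge m:=\min_{i\in I}n_i$. Hence on $[1+\varepsilon,T]$ one has $\sup|R(t)|\le C\,T^{N_{I^c}}(1+\varepsilon)^{-m}\to0$, while on $[T,\infty)$ one has $|R(t)|\le C\,t^{N_{I^c}-m}\le C\,T^{\,N_{I^c}-m}\to0$ as $m\to\infty$ once $m>N_{I^c}$. Thus $\chi(t)/t^{N_I}\to\psi_{|I|}(t)$ uniformly on compact subsets of $(1,\infty)$, with error uniformly small on $[T,\infty)$ for fixed $T>1$ and $m$ large.

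From this I extract the limit of the dominant root. The polynomial $\psi_{|I|}$ is monic with positive leading coefficient, so $\psi_{|I|}>0$ on $[t_0,\infty)$ for any $t_0$ exceeding its largest real root $\lambda_P$; by the tail estimate $\chi(t)/t^{N_I}>0$ on $[t_0,\infty)$ once $m$ is large, whence $\lambda_{\bar n}<t_0$ and $\limsup\lambda_{\bar n}\le\lambda_P$ (here $\lambda_{\bar n}$ is the largest real root of $\chi$, the $\lambda_{(n_1,\dots,n_k)}$ of the statement). By Theorem B the sequence $\lambda_{\bar n}$ (the $n_i$, $i\in I$, increasing, the others fixed, in the range $\lambda_{\bar n}>1$) is strictly increasing, so $L:=\lim\lambda_{\bar n}$ exists with $L\le\lambda_P$; evaluating the uniform approximation at $t=\lambda_{\bar n}$ gives $\psi_{|I|}(L)=\lim\bigl(\chi(\lambda_{\bar n})/\lambda_{\bar n}^{N_I}-R(\lambda_{\bar n})\bigr)=0$, so $L$ is a real root of $\psi_{|I|}$ and hence $L=\lambda_P$. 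Applying Hurwitz's theorem to $\chi/t^{N_I}\to\psi_{|I|}$ on a small disc about $\lambda_P$ avoiding the other roots of $\psi_{|I|}$ shows that for $m$ large the only zero of $\chi/t^{N_I}$ there is $\lambda_{\bar n}$, so $\lambda_P$ is a simple root of $\psi_{|I|}$. The convergence from below is now immediate: a strictly increasing sequence with limit $\lambda_P$ satisfies $\lambda_{\bar n}<\lambda_P$ for every admissible choice of the $n_i$, $i\in I$ (and the cases $\lambda_{\bar n}=1$ are trivially below $\lambda_P>1$).

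It remains to see that $\lambda_P$ is a Pisot number. Each $\lambda_{\bar n}>1$ is a dynamical degree of a birational map (Theorem \ref{T:muk}), hence lies in $\Lambda_\mathbb{C}$, and $\Lambda_\mathbb{C}$ is closed (Theorem \ref{T:bc}); so $\lambda_P\in\Lambda_\mathbb{C}=\{1\}\sqcup\Lambda_S\sqcup\Lambda_P$, and I assume $\lambda_P>1$ (true whenever the truncated data is non‑degenerate). Applying Hurwitz on $\{|t|\ge 1+\varepsilon\}$, where the convergence is uniform, together with the fact that $\chi$ is—up to cyclotomic factors—a Salem polynomial and so has exactly one root of modulus $>1$, we get that $\psi_{|I|}$ has exactly one root of modulus $>1$, namely $\lambda_P$; thus every Galois conjugate of $\lambda_P$ other than $\lambda_P$ has modulus $\le1$, and by Kronecker's theorem $\psi_{|I|}$ equals the minimal polynomial of $\lambda_P$ times a product of cyclotomic factors. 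On the other hand $\psi_{|I|}=\chi_I$ is the characteristic polynomial of the matrix $\tilde M(\tilde n)$ obtained from the genuine $W_n$‑element $M(\bar n)$ by deleting the rows and columns of the orbits indexed by $I$; this truncation no longer lies in $O(\mathbb{Z}^{1,n})$—concretely, Noether's equality $d^2-1=\sum d_i^2$ of Lemma \ref{L:facts} degenerates to a strict inequality—so, as already visible in \S\ref{SS:limit} (where the reciprocal of $\psi_{|I|}$ is the genuinely different polynomial $\psi_0$), $\psi_{|I|}$, and hence the minimal polynomial of $\lambda_P$, fails to be self‑reciprocal. Therefore $1/\lambda_P$ is not a conjugate of $\lambda_P$, and an algebraic integer $>1$ whose remaining conjugates lie in the closed unit disk but which is not a Salem number must have all those conjugates strictly inside; i.e. $\lambda_P$ is Pisot.

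The uniformity estimates for $R(t)$ and the Hurwitz bookkeeping are routine. The genuinely delicate point—the main obstacle—is the last step: verifying that the truncation $\psi_{|I|}=\chi_I$ is not self‑reciprocal (equivalently, that $1/\lambda_P$ is not a Galois conjugate of $\lambda_P$), for which the essential input is that deleting orbits from a hyperbolic reflection‑group element strictly violates Noether's relations and so pushes the characteristic polynomial off the self‑reciprocal locus; one must also make sure this degeneracy survives passage to the minimal polynomial, which is where Kronecker's theorem enters.
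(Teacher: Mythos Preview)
Your approach differs substantively from the paper's. The paper argues almost entirely by soft structural facts: the discussion preceding the proposition already gives that the limit is the largest real root $\lambda_P$ of $\psi_{|I|}$; then Blanc--Cantat's theorem (Theorem~\ref{T:bc}) that $\Lambda_\mathbb{C}$ is closed places $\lambda_P$ in $\Lambda_\mathbb{C}$, so it is Salem or Pisot, and the well-orderedness of $\Lambda_\mathbb{C}$ forces the convergence to be from below. The Salem case is then excluded by observing that the truncated matrix $\tilde M$ violates Noether's equalities of Lemma~\ref{L:facts} (a step which, as you also flag, is not fully fleshed out in either argument). Your route is more analytic and self-contained: explicit uniform estimates on the remainder $R(t)$, Hurwitz to pin down the root count outside the unit circle, and Kronecker to factor $\psi_{|I|}$ as (minimal polynomial)\,$\times$\,(cyclotomics). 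This buys you an independent proof that $\psi_{|I|}$ has exactly one root of modulus $>1$ without invoking closedness of $\Lambda_\mathbb{C}$; the paper's approach buys brevity by outsourcing those points to~\cite{blancdynamical}.

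One genuine caution: you invoke Theorem~B (monotonicity in orbit lengths) to get existence of the limit and convergence from below, but in the paper Theorem~B is proved \emph{after} this proposition, and its proof uses precisely the fact that $\psi$ has a unique real root $>1$ --- which is part of what the present proposition establishes. Your own Hurwitz argument already supplies that fact, so the circularity can be broken (run the Hurwitz step first and then the argument of Theorem~B goes through independently); but as written you should either reorder, or replace the appeal to Theorem~B by the well-orderedness of $\Lambda_\mathbb{C}$, as the paper does. The final step --- that $\psi_{|I|}$ is not self-reciprocal because deleting orbits breaks Noether's relations --- is the same heuristic the paper leans on, and in both places it would benefit from one honest line actually comparing coefficients of $\psi_{|I|}$ with those of its reciprocal $\psi_0$.
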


\begin{proof}
We may assume that $n_i$ are all equal for $i\in I$. From the discussion above, it is clear that the limit is a root $\lambda_P$ of $\psi_{0}(t)$.  $\lambda_P$ is either a Salem number or a Pisot number. It follows that $\psi_{0}(t)$ has a unique real root $>1$. With the essentially same argument as in Proposition \ref{P:n1}, we see that $\lambda_P$ is a Pisot number.
\end{proof}


\section{Salem and Pisot numbers in the Weyl Spectrum}\label{S:sp}
In this section, we inductively determine all possible Salem and Pisot numbers in the Weyl spectrum. By Theorem \ref{T:charpoly}, we need to find all possible orbit data $\mathcal{O}(\omega)$ for $\omega \in \cup_d \Omega_d$ to determine all possible Salem numbers and reciprocal quadratic integers in $\Lambda_W =\Lambda_\mathbb{C}$.

\begin{lem}\label{L:alllength}
Let $M \in  \mathcal{M}_d$, where $d\ge 2$, be a $(k+1)\times (k+1)$ matrix. Then, for all $k$-tuple $\bar n$ of positive integers, there is $\omega \in \Omega_d$ such that $\mathcal{O}(\omega) =(M, \bar n)$.
\end{lem}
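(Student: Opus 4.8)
The plan is to realize an arbitrary matrix $M \in \mathcal{M}_d$ with an arbitrary orbit-length vector $\bar n$ as the orbit data of some $\omega \in \Omega_d$, by starting from a witness element for $M$ (with some orbit lengths) and lengthening cycles one at a time. By definition $M \in \mathcal{M}_d$ means there is some $\omega_0 \in \Omega_d$ with $\tilde{(\omega_0)}_c = M$, hence orbit data $\mathcal{O}(\omega_0) = (M, \bar n^{(0)})$ for some $k$-tuple $\bar n^{(0)}$ of positive integers. The content of the lemma is that the orbit lengths can be prescribed freely and independently. The key observation is that the construction of $\omega$ from its orbit data via formulas (\ref{E:wc}) and (\ref{E:ws}) — equivalently, the passage from $M$ to the matrix $M(\bar n)$ — makes sense for \emph{any} choice of positive-integer tuple $\bar n$: one simply inserts cycles of the prescribed lengths. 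So the real question is whether the resulting linear map $M(\bar n)$ actually lies in $W_n$ (i.e.\ is an element of the Coxeter group acting on $\mathbb{Z}^{1,n}$ with $n = N + k$), not merely in $O(L_n)$ or $GL$.

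First I would reduce, as in the proof of Theorem B, to the case of changing a single coordinate: it suffices to show that if $\omega \in \Omega_d$ has orbit data $(M,(n_1,\dots,n_k))$, then there is $\omega' \in \Omega_d$ with orbit data $(M,(n_1+1,n_2,\dots,n_k))$; iterating this (and its obvious variants for the other indices) moves $\bar n^{(0)}$ up to any larger tuple, and since $\bar n^{(0)}$ was some fixed tuple one must also be able to go down to $\bar n^{(0)}_i = 1$ — but in fact the cleanest route is to first show every $M \in \mathcal{M}_d$ is realized with $\bar n = (1,\dots,1)$ and then only increase. To realize $(M,(1,\dots,1))$: take the witness $\omega_0$ with data $(M,\bar n^{(0)})$ and repeatedly \emph{shorten} a cycle of length $\geq 2$; shortening is the algebraically transparent direction because it corresponds to composing with (or deleting) a reflection/transposition in the $e_i$'s inside the cycle, and the key point is that a cycle of length $n_i$ in $\omega_s$ is, up to conjugation by a permutation of the $e_i$, a product of transpositions $s_j = s_{e_j - e_{j+1}}$, all of which lie in $W_n \subset W_\infty$; dropping one of them lands in $W_{n-1} \subset W_\infty$ with the same $M$ and $n_i$ decreased by one. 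Then for the increasing direction: given $\omega$ with data $(M,(n_1,\dots,n_k))$, we enlarge the ambient lattice from $\mathbb{Z}^{1,n}$ to $\mathbb{Z}^{1,n+1}$, relabel so that the first cycle is $e_1 \mapsto e_2 \mapsto \cdots \mapsto e_{n_1}$, insert a new basis vector $e_{n_1+1}$ into this cycle (so it becomes $e_1 \mapsto \cdots \mapsto e_{n_1} \mapsto e_{n_1+1} \mapsto e_1$ in $\omega_s$, with all later indices shifted by one), and leave $\omega_c$'s action on $e_0$ and on the cycle-heads $e_{n_{i-1}+1}$ otherwise intact. Concretely this is $\omega' = \omega_c' \cdot \omega_s'$ where $\omega_s'$ has the enlarged cycle and $\omega_c'$ still reads $e_0 \mapsto \omega e_0$ and $e_{n_{i-1}+1} \mapsto \omega e_{n_i}$ (with the new index $n_1' = n_1+1$). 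One checks this is exactly the map with matrix $M(\bar n')$ in the sense of Lemma \ref{L:charpoly}.

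The substantive step — the one I expect to be the main obstacle — is verifying that the enlarged map $\omega'$ genuinely lies in the Coxeter group $W_{n+1}$, i.e.\ that it is a word in the reflections $s_0,\dots,s_n$, and not just a lattice isometry fixing $\kappa_{n+1}$. The clean way to see this is via the decomposition of Proposition \ref{P:decom}: write the original $\omega$ (after conjugating the irrelevant $\rho$, $\omega_p$ away) as $\omega_c \cdot \omega_s$ with $\omega_c$ a product of reflections $\kappa_{i,j,k}$ through vectors $e_0 - e_i - e_j - e_k$ — this is where one uses that $M$ satisfies Noether's equalities and inequalities from Lemma \ref{L:facts}, which guarantee that $\omega_c$ decomposes into such reflections (the classical Noether factorization) — and $\omega_s$ a product of simple transpositions $s_j$. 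Inserting a new vector into a cycle of $\omega_s$ is literally pre-composing with an additional $s_n = s_{e_n - e_{n+1}}$ after the relabeling, which is a reflection in $W_{n+1}$; and $\omega_c$ viewed in $W_{n+1}$ is still the same product of $\kappa$'s (each $\kappa_{i,j,k}$ with $i,j,k \leq n$ is a reflection $s_{e_0 - e_i - e_j - e_k}$, hence in $W_{n+1}$). Therefore $\omega' \in W_{n+1}$. One then checks $\deg(\omega') = \omega' e_0 \cdot e_0 = \omega e_0 \cdot e_0 = d$ (the new vector is not involved in $\omega_c' e_0$), so $\omega' \in \Omega_d$; and by the explicit formulas its orbit permutation is again $M$ and its orbit-length tuple is $\bar n'$. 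Finally, combining the ``shorten to all-ones'' step with the ``increase by one in any coordinate'' step, induction on $\sum n_i$ gives every $\bar n$, completing the proof.
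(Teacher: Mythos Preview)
Your proposal is correct, but it is considerably more elaborate than what the paper does, and the main ``substantive step'' you worry about has a much shorter justification than the one you give.

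The paper's proof is two lines: for any $m\ge 1$ the product $s_1 s_2\cdots s_{m-1}$ of simple reflections is the $m$-cycle $e_1\to e_2\to\cdots\to e_m\to e_1$, so the permutation $\omega_s$ of (\ref{E:ws}) lies in $W_\infty$ for \emph{every} choice of $\bar n$. One then takes $\omega=\omega_c\cdot\omega_s$ directly. In other words, there is no need to start from a witness tuple $\bar n^{(0)}$ and move one coordinate at a time; you can jump to any $\bar n$ in a single step.

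On the point you flag as the obstacle---showing the resulting map lies in the Coxeter group and not merely in $O(L_n)$---you reach for Noether factorization to write $\omega_c$ as a product of the reflections $\kappa_{i,j,k}$. That works in spirit, but it is more than needed (and the claim that $\omega_c$ is a product of $\kappa$'s \emph{alone}, with no simple transpositions, is not obviously true; e.g.\ the matrices $M_2,M_3\in\mathcal{M}_2$ are $\kappa_{1,2,3}$ times a transposition). The direct argument is: since $M\in\mathcal{M}_d$ there is some $\omega_0\in\Omega_d\subset W_\infty$ with $\tilde{(\omega_0)}_c=M$; its cyclic part $(\omega_0)_s$ is a product of simple reflections $s_i$, hence $(\omega_0)_c=\omega_0\cdot(\omega_0)_s^{-1}\in W_\infty$. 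For any other $\bar n$ the corresponding $\omega_c$ is conjugate to $(\omega_0)_c$ by a permutation of the $e_i$'s, so it too lies in $W_\infty$. This replaces the Noether step entirely, and also dispenses with your two-stage ``shorten then lengthen'' induction.
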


\begin{proof}
For any positive integer $m$, let $s = s_1 s_2 \dots s_{m-1}$ be a product of $m-1$ reflections. It follows that $s \in \cup_n W_n$ is permuting $e_i$ for $i=1, \dots, m$ cyclically: \[ s\ :\ \left\{ \  \begin{aligned} &\ e_1 \mapsto e_2 \mapsto \cdots \mapsto e_m \mapsto e_1\\&\ e_j \mapsto e_j \qquad \text{for all }\ j \ge +1 \end{aligned} \right. .\] Thus, using the formula in (\ref{E:ws}), we can construct any orbit data with a given orbit permutation $M$. 
\end{proof}

\begin{lem}\label{L:base1}
Let $M \in \mathcal{M}_d$, where $d\ge 2$, be a $(k+1)\times (k+1)$ matrix. If $M'$ is a matrix obtained by permuting the columns of $M$ except the first column, then $M' \in \mathcal{M}_d$. Similarly, if $M'$ is a matrix obtained by permuting the rows of $M$ except the first row, then $M' \in \mathcal{M}_d$. 
\end{lem}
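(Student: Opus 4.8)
The plan is to realize the permutation of columns (other than the first) of $M$ by conjugating a witnessing element $\omega \in \Omega_d$ by an appropriate element of $W_\infty$ and then reading off its orbit data. Recall from Lemma \ref{L:alllength} that for $M \in \mathcal{M}_d$ there is $\omega \in \Omega_d$ with $\mathcal{O}(\omega) = (M, \bar n)$ for any chosen $\bar n$; in particular we may take $\bar n = (1,1,\dots,1)$, so that $\omega = \omega_c \cdot \omega_s$ with $\omega_s$ the identity and $\omega = \omega_c$ acting by $e_0 \mapsto \omega e_0$, $e_{i} \mapsto \omega e_{n_{i-1}+1}$ (which is a vector meeting $e_0$) for the $k$ indices, and fixing all other $e_j$. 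The matrix $\tilde\omega_c = M$ in this normalization records exactly the images of $e_0$ and of the $k$ special basis vectors, in the ordered basis $(e_0, e_1, \dots, e_k)$ (after reindexing so the special indices are $1, \dots, k$).

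First I would observe that permuting the columns of $M$ other than the first corresponds to relabelling the source basis vectors $e_1, \dots, e_k$ among themselves, while permuting the rows other than the first corresponds to relabelling the target basis vectors $e_1, \dots, e_k$. Concretely, let $\tau \in W_\infty$ be the permutation element that swaps the relevant $e_i$'s (a product of reflections $s_i$ through $e_i - e_j$, which lies in $W_\infty$ and fixes $e_0$). For the column statement, set $\omega' = \omega \cdot \tau$: since $\tau$ permutes $\{e_1, \dots, e_k\}$ and fixes $e_0$ and all other $e_j$, the element $\omega'$ still lies in $\Omega_d$ (its degree $\omega' e_0 \cdot e_0 = \omega e_0 \cdot e_0 = d$ is unchanged), and its orbit permutation is $M$ with columns $2, \dots, k+1$ permuted according to $\tau$. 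Hence $M' = \tilde\omega'_c \in \mathcal{M}_d$. For the row statement, set instead $\omega' = \tau \cdot \omega$: again $\omega' \in \Omega_d$ with the same degree, and post-composing by $\tau$ permutes the target coordinates, i.e. the rows $2, \dots, k+1$ of $M$; so $M' \in \mathcal{M}_d$.

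The one point that needs care — and which I expect to be the main (minor) obstacle — is checking that after multiplying by $\tau$ the resulting element still has the structure required for its orbit permutation to be a well-defined $(k+1)\times(k+1)$ matrix in $\mathcal{M}_d$: that is, that $\tau$ does not move an index out of $I_e$ into $I_p$ or merge orbits. Since $\tau$ only permutes the finite set $\{e_1, \dots, e_k\}$ of special vectors amongst themselves and fixes $e_0$, the index sets $I_e(\omega')$ and $I_p(\omega')$ agree with those of $\omega$ up to the relabelling, the orbit lengths stay $(1, \dots, 1)$, and no two distinct special vectors get identified; so $\tilde\omega'_c$ is genuinely the matrix $M'$ obtained by the asserted column (resp. row) permutation, and it automatically satisfies Noether's equalities and inequalities of Lemma \ref{L:facts} because $\omega' \in W_\infty$. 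This completes the argument; no nontrivial computation is required beyond tracking how left/right multiplication by a permutation acts on the matrix entries.
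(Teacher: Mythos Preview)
Your proposal is correct and takes essentially the same approach as the paper. The paper's proof is simply the one-line observation that swapping the second and third columns is $M' = M \cdot s_1$ (and swapping rows is $s_1 \cdot M$), then iterates; you carry out the same right/left multiplication by a permutation element fixing $e_0$, but spell out more carefully why the resulting element still lies in $\Omega_d$ with the expected orbit permutation.
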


\begin{proof}
If $M'$ is obtained by permuting the second and third columns, then $M' = M \cdot s_1$ where $s_1$ is the reflection through the vector $e_1- e_2$. Also, $s_1 \cdot M$ results in a matrix obtained by permuting the second and third rows of $M$. This lemma follows by applying this argument repeatedly.
\end{proof}

Also, we have:
 
\begin{lem}\label{L:base2}
Let $M\in \mathcal{M}_d$ be a $(k+1)\times (k+1)$ matrix.  Suppose $M'$ is a matrix obtained by permuting the $2^\text{nd}$ to $(k+1)^\text{th}$ rows of $M$. Then by renaming $e_1, \dots, e_k$ if necessary, $M'$ is equal to a matrix obtained by permuting the $2^\text{nd}$ to $(k+1)^\text{th}$ columns of $M$.
\end{lem}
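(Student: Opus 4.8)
The plan is to analyze what happens to the defining matrix equations \eqref{E:mat} under a permutation of the lower-right block of $M$. Recall that $M = \tilde\omega_c$ records the action of $\omega_c$ on the basis vectors $e_0, e_{n_0+1}, \dots, e_{n_{k-1}+1}$: the $(i+1)^{\text{st}}$ column is the image $\omega_c(e_{n_{i-1}+1}) = d_i e_0 - \sum_j m_{j,i} e_{n_{j-1}+1}$ (abbreviating to the relevant coordinates), and the $1,1$ block records $\omega_c(e_0) = d e_0 - \sum_j c_j e_{n_{j-1}+1}$. Permuting the $2^{\text{nd}}$--$(k+1)^{\text{st}}$ columns of $M$ by $\tau \in S_k$ produces the matrix representing $\omega_c \cdot \tau$, where $\tau$ here is realized as the permutation of basis vectors $e_{n_{i-1}+1} \mapsto e_{n_{\tau(i)-1}+1}$ (which lies in $W_\infty$, being a product of the reflections $s_i$ through $e_i - e_{i+1}$, exactly as in the proof of Lemma \ref{L:base1}). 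Permuting the $2^{\text{nd}}$--$(k+1)^{\text{st}}$ rows by $\sigma$ produces the matrix for $\sigma \cdot \omega_c$. So the content of the lemma is that for every row-permutation $\sigma$, the element $\sigma \cdot \omega_c$ has the same underlying matrix (after relabelling the $e_i$'s) as $\omega_c \cdot \tau$ for a suitable column-permutation $\tau$.

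First I would unwind the effect of relabelling. A relabelling of $e_1, \dots, e_k$ that respects the orbit structure amounts to conjugating $\omega_c$ (and simultaneously the ambient setup) by a basis permutation $\rho$ sending $e_{n_{i-1}+1} \mapsto e_{n_{\rho(i)-1}+1}$; this sends $\omega_c$ to $\rho \, \omega_c \, \rho^{-1}$ and thus, in matrix terms, conjugates $M$ by the permutation matrix of $\rho$ restricted to the lower block, i.e.\ it permutes rows by $\rho$ and columns by $\rho^{-1}$ at once. Composing this with the column permutation $\tau$ we are free to choose, the claim becomes: given $\sigma$, find $\rho$ and $\tau$ so that left-multiplying $M$ by the row-permutation $\sigma$ equals conjugating $M$ by $\rho$ followed by permuting columns by $\tau$; equivalently $\sigma = \rho$ on rows and $\rho^{-1}\tau = \mathrm{id}$ on columns forces $\tau = \rho = \sigma^{-1}$... but that only handles the trivial direction, so the real point is subtler: row-permuting $M$ by $\sigma$ does \emph{not} in general give a matrix in $\mathcal{M}_d$ with the same orbit labels unless we also permute columns. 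The key structural input is Lemma \ref{L:facts}(6): $\omega_c^{-1}$ has matrix $\begin{bmatrix} d & \bar c \\ -\bar d & -m^t\end{bmatrix}$, i.e.\ transposing the body of the matrix (and swapping $\bar c \leftrightarrow \bar d$) corresponds to taking the inverse. So a row permutation of $\omega_c$, call it $\sigma\cdot\omega_c$, has inverse $\omega_c^{-1}\cdot\sigma^{-1}$, which is a \emph{column} permutation of $\omega_c^{-1}$; transposing back, this is (conjugate to) a column permutation of $\omega_c$ up to the relabelling that swaps the roles of $\bar d$ and $\bar c$. I would make this precise: show $\sigma \cdot \omega_c = \big(\omega_c^{-1} \cdot \sigma^{-1}\big)^{-1}$, apply Lemma \ref{L:facts}(6) to identify $\omega_c^{-1}\cdot\sigma^{-1}$ as $\omega_c'$-with-columns-permuted for the inverse-type matrix $\omega_c'$, and then invoke that taking inverses and transposing the body is exactly the relabelling $e_i \leftrightarrow$ (the basis of the inverse chart) allowed in the statement.

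The main obstacle I expect is bookkeeping: one must check that after the row permutation the resulting matrix still has the shape of an orbit permutation matrix (positive $1,1$ entry $d$, nonnegative body, satisfying Noether's equalities so that it genuinely lies in $\mathcal{M}_d$) and that the orbit-length tuple $\bar n$ is carried along consistently, so that ``renaming $e_1,\dots,e_k$'' is a legitimate operation producing another honest element of $\Omega_d$ rather than just a formal matrix identity. Concretely I would: (i) set up the correspondence between column/row permutations of $M$ and right/left multiplication of $\omega_c$ by permutation elements of $W_\infty$, exactly as in Lemma \ref{L:base1}; (ii) use Lemma \ref{L:facts}(6) to pass between $\sigma\cdot\omega_c$ and its inverse, turning a left multiplication into a right multiplication on the transposed-body matrix; (iii) observe that passing to the inverse and transposing the body swaps the distinguished basis $(e_0, e_{n_{i-1}+1})$ with the corresponding basis for $\omega_c^{-1}$ — and since orbit data is defined only up to reordering indices, this swap is precisely the permitted relabelling; (iv) conclude that the row-permuted $M'$ coincides, after this relabelling, with a column permutation of $M$, with matching orbit lengths. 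The verification in (iii) that the relabelling is admissible — i.e.\ that it does not disturb membership in $\mathcal{M}_d$ — is where the Noether equalities and inequalities of Lemma \ref{L:facts} do the work, since they are symmetric under $\bar d \leftrightarrow \bar c$ and $m \leftrightarrow m^t$.
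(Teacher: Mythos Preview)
You dismissed the correct argument too quickly. The lemma is nothing more than the group identity
\[
\sigma \cdot M \;=\; \sigma \,(M \cdot \sigma)\, \sigma^{-1},
\]
which says that the row-permuted matrix $\sigma M$ is the conjugate, by the basis permutation $\sigma$, of the column-permuted matrix $M\sigma$. Conjugation by a permutation of $e_1,\dots,e_k$ is exactly the ``renaming'' the statement allows. The paper's proof is precisely this, carried out for a single transposition and then iterated: if $M' = M\cdot s_1$ (a column swap) then $s_1\cdot M'\cdot s_1 = s_1\cdot M$ (the corresponding row swap). Your observation that one may take $\rho = \tau = \sigma$ (or $\sigma^{-1}$, depending on your ordering convention) is not ``the trivial direction''; it \emph{is} the entire proof, and nothing subtler is needed.

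Your detour through Lemma~\ref{L:facts}(6) does not close. Passing to inverses turns $\sigma\cdot\omega_c$ into $\omega_c^{-1}\cdot\sigma^{-1}$, which is indeed a column permutation of the matrix $\begin{bmatrix} d & \bar c\\ -\bar d & -m^{t}\end{bmatrix}$ of $\omega_c^{-1}$. But this matrix is \emph{not} in general obtainable from $M$ by a permutation of $e_1,\dots,e_k$: conjugating $M$ by such a permutation reshuffles the entries of $\bar d$ among themselves and the entries of $\bar c$ among themselves, but never exchanges the two tuples or transposes $m$. So your step~(iii), claiming that ``taking inverses and transposing the body is exactly the relabelling allowed in the statement,'' is false; at best your argument shows $\sigma M$ is a relabelled column permutation of $M^{-1}$, which is a different (and weaker) conclusion than the lemma asserts.
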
 

\begin{proof}
Suppose $M' \in \mathcal{M}_d$ is obtained by permuting the second and third columns of $M \in \mathcal{M}_d$. Then we have 
$ M' = M \cdot s_1$ and therefore: \[ s_1 \cdot M' \cdot s_1 = s_1 \cdot M  \in \mathcal{M}_d.\]
In all other cases, we can achieve the desired result by applying the above argument repeatedly. 
\end{proof}

Suppose $M \in \mathcal{M}_d$ is a $(k+1)\times (k+1)$ matrix. Let $S_M \subset W_k$ be the set of all permutations of $e_1, \dots e_k$:
\[ S_M := \{ w \in W_k: w(e_0) = e_0 \}. \]
Then, by above Lemmas \ref{L:base1}- \ref{L:base2}, we see that the orbit of $M$ is in $\mathcal{M}_d$:
\[ \mathcal{O}_M:=\{ M \cdot s : s \in S_M\} \subset \mathcal{M}_d. \]
Since $\mathcal{M}_d$ is finite, we can write $\mathcal{M}_d$ as a finite disjoint union by choosing one representative for each orbit:
\[ \mathcal{M}_d = \sqcup \mathcal{O}_{M_i} \qquad \text{where} \qquad M_{i} \in \mathcal{O}_{M_i},\ \ M_{i} \not\in \mathcal{O}_{M_j} \text{ for } j \ne i.\]

For instance, we have: 
\[ \mathcal{M}_2 = \mathcal{O}_M, \qquad \text{where}\qquad  M= \begin{bmatrix} 2&1&1&1\\-1&0&-1&-1\\-1&-1&0&-1\\-1&-1&-1&0 \end{bmatrix},\]
\[ \mathcal{M}_3 = \mathcal{O}_M, \qquad \text{where}\qquad  M= \begin{bmatrix} 3&2&1&1&1&1\\-2&-1&-1&-1&-1&-1\\-1&-1&-1&0&0&0\\-1&-1&0&-1 &0&0\\-1&-1&0&0&-1&0\\ -1&-1&0&0&0&-1\end{bmatrix},\]
and 
\[ \mathcal{M}_4 = \mathcal{O}_{M_1} \sqcup \mathcal{O}_{M_2},\] where \[ M_1= \begin{bmatrix} 4&3&1&1&1&1&1&1\\-3&-2&-1&-1&-1&-1&-1&-1\\-1&-1&-1&0&0&0&0&0\\-1&-1&0&-1 &0&0&0&0\\-1&-1&0&0&-1&0&0&0\\ -1&-1&0&0&0&-1&0&0\\ -1&-1&0&0&0&0&-1&0\\ -1&-1&0&0&0&0&0&-1\end{bmatrix}, \ \ M_2= \begin{bmatrix} 4&2&2&2&1&1&1\\-2&-1&-1&-1&-1&-1&0\\-2&-1&-1&-1&-1&0&-1\\-2&-1&-1&-1 &0&-1&-1\\-1&-1&-1&0&0&0&0\\ -1&-1&0&-1&0&0&0\\ -1&0&-1&-1&0&0&0\end{bmatrix}.\]

\vspace{1ex}


%
%
%
%

\begin{lem}\label{L:smallerdegree}
Suppose $M = \begin{bmatrix} d & \bar d \\ - \bar c & - m\end{bmatrix} \in \mathcal{M}_d$. Then there exists an $\omega \in \Omega_d$ such that \[ \deg(M \cdot \omega ) \lneq d. \]
In this case, we have: 
\begin{itemize}
\item $ M\cdot \omega (e_p) \cdot e_0 = M (e_p) \cdot e_0$ \ \ for $ p \ne 0, i,j,k$, and
\item $ M\cdot \omega (e_p) \cdot e_0 = d - d_q-d_r$ \ \ for $ \{p,q,r\} = \{i,j,k\} $.
\end{itemize}
\end{lem}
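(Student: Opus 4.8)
The plan is to treat this as the Coxeter-group incarnation of Noether's reduction of a plane Cremona map by a quadratic transformation: the element $\omega$ will be the quadratic reflection $\kappa_{i,j,k}$ based at the three indices where the tuple $\bar d$ is largest, after which the statement follows from Noether's inequality together with a one-line computation of the action on $e_0$.

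First I would extract the numerics of $M$. Since $M\in\mathcal{M}_d$ is the orbit-permutation part $\tilde\omega_c$ of a genuine $\omega\in\Omega_d$ (equivalently, by Lemma~\ref{L:alllength}, it is literally the matrix of a Coxeter element on $\langle e_0,\dots,e_\ell\rangle$ once all orbit lengths are taken to equal $1$), Lemma~\ref{L:facts} applies to it after padding $\bar d,\bar c$ with zeros, which changes neither the three largest entries nor the quadratic sums. Parts (4) and (5) of Lemma~\ref{L:facts} together already force at least three strictly positive entries in $\bar d$, so I may choose distinct indices $i,j,k$ carrying the three largest entries $d_i\ge d_j\ge d_k$; then
\[ d_i+d_j+d_k\;\ge\;d+1, \qquad d_i+d_j\;\le\;d, \qquad d_j+d_k\;\le\;d. \]

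Next I would set $\omega=\kappa_{i,j,k}$, the reflection through $\alpha=e_0-e_i-e_j-e_k$, which has $\alpha\cdot\alpha=-2$ and Weyl degree $2$ (so strictly speaking $\omega$ lies in $\Omega_2$, agreeing with $\Omega_d$ only in the base case $d=2$). From $s_\alpha(x)=x+(x\cdot\alpha)\alpha$ one computes directly
\[ \omega(e_0)=2e_0-e_i-e_j-e_k, \qquad \omega(e_i)=e_0-e_j-e_k \quad (\text{and cyclically in }i,j,k), \qquad \omega(e_p)=e_p \quad (p\ne0,i,j,k). \]
Composing and pairing with $e_0$, using $M(e_0)\cdot e_0=d$ and $M(e_q)\cdot e_0=d_q$ for $q\ge1$, gives
\[ \deg(M\cdot\omega)=(M\cdot\omega)(e_0)\cdot e_0=2d-d_i-d_j-d_k\;\le\;2d-(d+1)=d-1\;\lneq\;d, \]
while $(M\cdot\omega)(e_i)\cdot e_0=d-d_j-d_k$, i.e.\ $d-d_q-d_r$ with $\{p,q,r\}=\{i,j,k\}$ for $p=i$ and cyclically for $p=j,k$, and $(M\cdot\omega)(e_p)\cdot e_0=M(e_p)\cdot e_0$ for $p\ne0,i,j,k$. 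These are exactly the two asserted identities. As a consistency check, $d-d_q-d_r\ge0$ by part (5) and $\deg(M\cdot\omega)\ge d/2\ge1$ (since $d_i+d_j\le d$ and $d_k\le d/2$), so $M\cdot\omega$ is again of the non-negative shape expected of a Coxeter element.

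I do not expect a real obstacle: the only substantive input is Noether's inequality, which is already available, and the rest is the direct computation above. The points needing a little care are purely bookkeeping --- confirming there really are three indices to use (which parts (4) and (5) guarantee), legitimizing the product $M\cdot\omega$ by viewing $M$ as an honest element of $W_\ell$ via Lemma~\ref{L:alllength}, and keeping track that $\kappa_{i,j,k}$ is a degree-$2$ reflection, which is precisely why the new degree data emerges in the shape $d-d_q-d_r$ characteristic of composition with a single quadratic map.
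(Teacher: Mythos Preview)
Your proof is correct and follows essentially the same approach as the paper: both choose $\omega$ to be the reflection $\kappa_{i,j,k}$ through $e_0-e_i-e_j-e_k$ at three indices with $d_i+d_j+d_k\ge d+1$ (supplied by Noether's inequality, Lemma~\ref{L:facts}(4)), and then compute $\deg(M\cdot\omega)=2d-d_i-d_j-d_k\le d-1$. You are also right to flag that this $\omega$ has Weyl degree $2$, not $d$; the paper's proof likewise produces an element of $\Omega_2$, so the ``$\omega\in\Omega_d$'' in the statement appears to be a typo.
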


\begin{proof}
Suppose $M \in W_n$ for some $n\ge 3$. By Lemma \ref{L:facts}, there are three indices $i,j,k$ such that $d_i+d_j+d_k \ge d+1$. Let $\omega \in W_n$ be a reflection through the vector $e_0 - e_i -e_j-e_k$, that is,
\[\begin{aligned}  \omega \ : \ &e_0 \mapsto 2 e_0 - e_i-e_j-e_k, \\ &e_p \mapsto e_0 - e_i-e_j-e_k + e_p \quad \text{for } \ p = i,j,k, \\ & e_p \mapsto e_p \quad \text{otherwise}. \end{aligned} \]
Thus we have \[ \deg (M \cdot \omega) \ =\ 2d - d_i -d_j-d_k \le d-1.\] The second part of this lemma follows from simple matrix multiplication. 
\end{proof}

Let $\kappa_{i,j,k}$ be the reflection through the vector $e_0 - e_i -e_j-e_k$. Since $\kappa_{i,j,k}$ is an involution, 
we see that any $M \in \mathcal{M}_d$ can be obtained by $M' \cdot \kappa_{i,j,k}$ for some $M' \in M_{d'}$ with $d'<d$. 
In fact, by the previous Lemma \ref{L:smallerdegree}, we see that
\[ d'_i+d'_j+d'_k = 3d - 2 (d_i+d_j+d_k) \le d-2 \]
where  $d_i',d_j',d_k'$ are the corresponding entries in the first row of the matrix representation of $M' = \begin{bmatrix} d'&\bar d' \\ - \bar c'&m' \end{bmatrix} $.
Notice that $d_i'$ could be zero if $d_j+d_k = d$. 

Suppose $M =\begin{bmatrix} d&\bar d\\-\bar c&-m\end{bmatrix} \in \mathcal{M}_d$ is represented by a $(k+1)\times (k+1)$ matrix. For each $s \le d-1$,  let: 
\[ \begin{aligned} &I_{s,3}(M) = \{ (i_1, i_2,i_3) : d_{i_1}+d_{i_2}+d_{i_3} = s , 1\le i_1\le i_2\le i_3\le k\},\\ 
&I_{s,2}(M) = \{ (i_1, i_2,k+1) : d_{i_1}+d_{i_2}= s , 1\le i_1\le i_2\le k\},\\ 
&I_{s,1}(M) = \{ (i_1, k+1,k+2) : d_{i_1}= s , 1\le i_1\le k\},\\
&I_{s,0}(M) = \{ ( k+1,k+2,k+3) \}.\\ 
\end{aligned}\] 
And let: \[ I_s(M) = I_{s,3}(M) \cup  I_{s,2}(M) \cup I_{s,1}(M) \cup I_{s,0}(M).\]
Notice that this set $I_s(M)$ is not necessarily non-empty.
Also, let us define: 
\[h_s(\mathcal{M}_d) = \{ M \cdot \kappa_{i,j,k} : M=\begin{bmatrix} d&\bar d\\-\bar c&-m\end{bmatrix} \in \mathcal{M}_d, (i,j,k) \in I_s(M) \}. \]

\begin{lem}\label{L:hmap}
Suppose $0 \le s \le d-1$. Then \[ h_s(\mathcal{M}_d) \subset \mathcal{M}_{2 d-s}.\]
\end{lem}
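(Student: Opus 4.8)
The plan is to show that each element $M \cdot \kappa_{i,j,k}$ with $(i,j,k) \in I_s(M)$ lies in $\mathcal{M}_{2d-s}$, i.e.\ that it is the orbit permutation $\tilde\omega_c$ of some $\omega \in \Omega_{2d-s}$. Since $M \in \mathcal{M}_d$, there is $\omega_0 \in \Omega_d$ with $\tilde\omega_c(\omega_0) = M$; concretely we may take $\omega_0 = M(\bar n)$ for some choice of $\bar n$, but more useful here is to think of $M$ itself as an element of $W_n$ for suitable $n$ (adjoining the three ``fresh'' indices $k+1,k+2,k+3$ if needed so that $\kappa_{i,j,k}$ makes sense, as is already built into the definition of $I_s(M)$). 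First I would compute the degree: by Lemma~\ref{L:smallerdegree} (with the roles reversed, since $\kappa_{i,j,k}$ is an involution) we get $\deg(M \cdot \kappa_{i,j,k}) = 2d - (d_i + d_j + d_k) = 2d - s$, using exactly that $(i,j,k) \in I_s(M)$ means $d_i + d_j + d_k = s$ (with the convention $d_{k+1} = d_{k+2} = d_{k+3} = 0$ absorbing the cases of $I_{s,2}, I_{s,1}, I_{s,0}$). So the degree is correct; the content of the lemma is that the resulting matrix is actually \emph{in} $\mathcal{M}_{2d-s}$, i.e.\ that it arises as an orbit permutation — equivalently, that all its first-row and first-column entries $d'_p, c'_p$ are strictly positive, so that no index gets absorbed into the ``$\omega_p$'' or ``fixed'' part.

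Next I would verify the positivity. Write $M' = M \cdot \kappa_{i,j,k}$ with first row $(2d-s, \bar d\,')$ and first column $(2d-s, -\bar c\,')$. For $p \notin \{i,j,k\}$, the second bullet of Lemma~\ref{L:smallerdegree} gives $d'_p = d_p$ (and a symmetric statement for $c'_p$ via Lemma~\ref{L:facts}(6), applying the same reasoning to $\kappa_{i,j,k} \cdot M^{-1}$ or just reading off the matrix product), which is positive because $M \in \mathcal{M}_d$ forces $d_p > 0$. For $p \in \{i,j,k\}$, again by Lemma~\ref{L:smallerdegree} we have $d'_p = d - d_q - d_r$ where $\{p,q,r\} = \{i,j,k\}$; this is $\ge 0$ by Noether's inequality Lemma~\ref{L:facts}(5) ($d_q + d_r \le d$), and I need it to be $> 0$. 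This is where the decomposition of $I_s(M)$ into $I_{s,3}, I_{s,2}, I_{s,1}, I_{s,0}$ does the work: if $d - d_q - d_r = 0$ for some pair, then the triple $(i,j,k)$ should really be regarded as lying in a lower stratum $I_{s,2}$ or $I_{s,1}$ or $I_{s,0}$ with that index replaced by a fresh one (whose $d$-value is genuinely zero and which is meant to be a newly blown-up point), and after this identification the zero entries are exactly the ``structural'' ones corresponding to fresh indices, which is allowed. So the careful bookkeeping is to check that the only way to get $d'_p = 0$ with $p \in \{1,\dots,k\}$ is the degenerate case already excluded by the definition of $I_s(M)$, and that the fresh indices behave as new exceptional curves.

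Finally I would assemble these pieces: having shown $M'$ has degree $2d - s$ and all relevant positivity, I would exhibit $\omega \in \Omega_{2d-s}$ with $\tilde\omega_c(\omega) = M'$ by taking $\omega = M'(\bar n)$ for any convenient $\bar n$ (e.g.\ all $n_i = 1$), invoking Lemma~\ref{L:alllength} to know such an element of $\Omega_{2d-s}$ exists, and observing that by construction its orbit permutation is $M'$. The conclusion $h_s(\mathcal{M}_d) \subset \mathcal{M}_{2d-s}$ then follows by ranging over all $M \in \mathcal{M}_d$ and all $(i,j,k) \in I_s(M)$.

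The main obstacle I anticipate is precisely the positivity/degeneracy analysis in the middle step: making rigorous the claim that when one of the quantities $d - d_q - d_r$ vanishes the triple is correctly accounted for in a lower stratum $I_{s,j}$, and that the matrix $M'$ then genuinely has the shape of an orbit permutation (no spurious zero rows or columns among the non-fresh indices, and the correct interaction with Lemma~\ref{L:facts}). In other words, the arithmetic of the degree is immediate from Lemma~\ref{L:smallerdegree}, but confirming membership in $\mathcal{M}_{2d-s}$ — rather than merely ``a matrix of degree $2d-s$'' — is the delicate part, and it hinges on tracking how the fresh indices $k+1, k+2, k+3$ enter and on Noether's inequalities being equalities exactly in the degenerate cases.
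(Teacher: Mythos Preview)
Your approach is the same as the paper's: compute $M\cdot\kappa_{i,j,k}$ on basis vectors, read off the degree $2d-s$, and check that the first-row entries $d'_p$ are strictly positive so that the result is a genuine orbit permutation. The paper's proof is three lines and consists of exactly this.

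However, the ``main obstacle'' you anticipate is a phantom. You write $d'_p = d - d_q - d_r$ for $p\in\{i,j,k\}$, invoke Noether's inequality $d_q+d_r\le d$ to get $d'_p\ge 0$, and then worry about the equality case. But the hypothesis of the lemma is $s\le d-1$, and since $d_i+d_j+d_k=s$ (with $d_\ell=0$ for fresh indices $\ell>k$), you have
\[
d'_p \;=\; d - d_q - d_r \;=\; d - (s - d_p) \;=\; (d-s) + d_p \;\ge\; 1 + 0 \;=\; 1.
\]
So strict positivity is automatic; Noether's inequality~(5) is not needed, there are no degenerate cases, and the stratification $I_{s,3},I_{s,2},I_{s,1},I_{s,0}$ plays no role in the positivity argument. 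The fresh indices simply have $d_p=0$ and hence $d'_p=d-s\ge 1$, exactly as needed for them to appear as new orbit columns. Once you see this, your middle paragraph collapses to one line and the proof is complete.
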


\begin{proof}
Suppose $M=\begin{bmatrix} d&\bar d\\-\bar c&-m\end{bmatrix} \in \mathcal{M}_d$. Also, suppose $d_{i_1}+d_{i_2}+d_{i_3} = s $, where $1\le i_1\le i_2\le i_3\le k+3$. 
Then we have: 
\[ \begin{aligned} M \cdot \kappa_{i,j,k}\ :\ &e_0 \mapsto M (2e_0 - e_i-e_j-e_k), \\ & e_s \mapsto M( e_0 - e_i-e_j-e_k + e_s) \qquad \text{ for } s \in \{ i,j,k\}, \\& e_s \mapsto M(e_s) \qquad \text{ for } s \not\in \{i,j,k\}. \end{aligned}\]
Thus, we see that the first row of $M \cdot  \kappa_{i,j,k}$ consists of strictly positive numbers, and therefore $M \cdot  \kappa_{i,j,k} \in \mathcal{M}_{2d-s}$.
\end{proof}

\begin{thm}\label{T:allbase}
For each $d\ge 2$, the set $\mathcal{M}_d$ is determined recursively:
\[ \mathcal{M}_{d} =\bigcup_{i=1}^{d/2} h_{d-2i}(\mathcal{M}_{d-i}),  \]
where
\[ \mathcal{M}_2 = \left \{  \begin{bmatrix} 2&1&1&1\\-1&0&-1&-1\\-1&-1&0&-1\\-1&-1&-1&0 \end{bmatrix},\   \begin{bmatrix} 2&1&1&1\\-1&-1&0&-1\\-1&0&-1&-1\\-1&-1&-1&0 \end{bmatrix}, \  \begin{bmatrix} 2&1&1&1\\-1&-1&-1&0\\-1&0&-1&-1\\-1&-1&0&-1 \end{bmatrix} \right\}. \]
\end{thm}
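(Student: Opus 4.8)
The plan is to establish the recursion $\mathcal{M}_d = \bigcup_{i=1}^{\lfloor d/2 \rfloor} h_{d-2i}(\mathcal{M}_{d-i})$ by a double inclusion, treating the base case $\mathcal{M}_2$ separately (which is a finite check using Noether's equalities in Lemma \ref{L:facts}: for $d=2$ one has $\sum d_i = 3$, $\sum d_i^2 = 3$, forcing $\bar d = (1,1,1)$, and similarly $\bar c = (1,1,1)$, so each matrix is $M_\sigma = M_0 \cdot m(\sigma)$ for the Cremona matrix $M_0$, and after identifying the orbit $\mathcal{O}_{M_0}$ one lists representatives). The containment $\supseteq$ is the content of Lemma \ref{L:hmap}: if $M \in \mathcal{M}_{d-i}$ and $(i_1,i_2,i_3) \in I_{d-2i}(M)$, then $d_{i_1}+d_{i_2}+d_{i_3} = (d-i) - i = d - 2i$... wait, more precisely $h_s(\mathcal{M}_{d'}) \subset \mathcal{M}_{2d'-s}$, and with $d' = d-i$, $s = d-2i$ we get $2d'-s = 2(d-i)-(d-2i) = d$, so indeed $h_{d-2i}(\mathcal{M}_{d-i}) \subset \mathcal{M}_d$. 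Here one must also check that the triple indices $(i_1,i_2,i_3) \in I_s(M)$ — including the bookkeeping versions $I_{s,2}, I_{s,1}, I_{s,0}$ that refer to auxiliary indices $k+1, k+2, k+3$ with $d_{k+j} = 0$ — really do produce a valid element of the larger Coxeter group, i.e. that the resulting matrix lies in $W_N$ for some $N$ and has strictly positive first row; this is exactly what Lemma \ref{L:hmap} asserts, so this direction is essentially done once the index ranges are matched up.

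The substantive direction is $\subseteq$: given $M = \begin{bmatrix} d & \bar d \\ -\bar c & -m \end{bmatrix} \in \mathcal{M}_d$ with $d \ge 3$, I must exhibit $i$ with $1 \le i \le \lfloor d/2 \rfloor$ and $M' \in \mathcal{M}_{d-i}$ with $M = M' \cdot \kappa_{j_1,j_2,j_3}$ for an appropriate triple. By Lemma \ref{L:facts}(4), after reordering there are three indices with $d_{j_1} + d_{j_2} + d_{j_3} \ge d+1$; set $s := d_{j_1}+d_{j_2}+d_{j_3}$ and $i := s - d$, so $i \ge 1$. Applying Lemma \ref{L:smallerdegree} with $\omega = \kappa_{j_1,j_2,j_3}$ gives $M' := M \cdot \kappa_{j_1,j_2,j_3}$ with $\deg(M') = 2d - s = d - i$, and since $\kappa$ is an involution, $M = M' \cdot \kappa_{j_1,j_2,j_3}$. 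One then checks (i) that $M' \in \mathcal{M}_{d-i}$ — equivalently that $M'$ still has all first-row entries nonzero, which needs care because Lemma \ref{L:smallerdegree} only gives $M'(e_p)\cdot e_0 = d - d_q - d_r \ge 0$ by Lemma \ref{L:facts}(5), and it may vanish; this is precisely why the definition of $I_s(M)$ includes the padded triples $I_{s,2}, I_{s,1}, I_{s,0}$ with phantom zero-weight indices $k+1,\dots$, so that a degenerate $M'$ with some $d'_p = 0$ is still recorded in $\mathcal{M}_{d-i}$ after deleting the corresponding (now-fixed) basis vector — so part of this step is reconciling that bookkeeping; and (ii) that $i \le \lfloor d/2 \rfloor$, which follows from $d_{j_1}+d_{j_2}+d_{j_3} \le \tfrac{3}{2} d$: indeed by Lemma \ref{L:facts}(5), $d_{j_a} + d_{j_b} \le d$ for each of the three pairs, and summing the three inequalities gives $2(d_{j_1}+d_{j_2}+d_{j_3}) \le 3d$, hence $s \le \lfloor 3d/2 \rfloor$ and $i = s - d \le \lfloor d/2 \rfloor$. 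Thus $M \in h_{d-2i}(\mathcal{M}_{d-i})$ with the triple $(j_1,j_2,j_3)$ (suitably padded), completing the inclusion.

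The main obstacle I anticipate is not the degree arithmetic but the consistency of the padding convention: one needs that when the "reduced" matrix $M'$ acquires a zero in its first row at position $p$, the basis vector $e_p$ is genuinely fixed by $M'$ (so that $M'$, after deletion of that coordinate, is a bona fide orbit-permutation matrix in some $\mathcal{M}_{d-i}$), and conversely that reattaching such a fixed coordinate and then applying $\kappa$ recovers $M$ — this requires unwinding the $\tilde\omega_c$/orbit-data normalization from Section \ref{SS:fspec} and Lemma \ref{L:smallerdegree}'s explicit description of $M \cdot \omega$ on each $e_p$. A secondary technical point is well-definedness independent of which triple $(j_1,j_2,j_3)$ realizing $\sum d_{j_a} \ge d+1$ one picks, but since the statement is only set equality of $\mathcal{M}_d$, this does not need to be addressed — any valid choice places $M$ in the stated union. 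Once these bookkeeping matters are settled, both inclusions are immediate from Lemmas \ref{L:facts}, \ref{L:smallerdegree}, and \ref{L:hmap}.
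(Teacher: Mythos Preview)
Your proposal is correct and follows essentially the same approach as the paper, which simply states that the theorem follows from Lemmas \ref{L:smallerdegree} and \ref{L:hmap}. You have in fact supplied considerably more detail than the paper does: the explicit verification that $2(d-i)-(d-2i)=d$ for the inclusion $\supseteq$, and the bound $i\le \lfloor d/2\rfloor$ via summing the three pairwise inequalities $d_{j_a}+d_{j_b}\le d$ from Lemma \ref{L:facts}(5), are both left implicit in the paper's one-line proof; your concern about the padding convention when $d-d_q-d_r=0$ is likewise a genuine bookkeeping point that the paper does not spell out.
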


\begin{proof}
This follows from Lemmas \ref{L:smallerdegree} and \ref{L:hmap}.
\end{proof}

\begin{thm}\label{T:allsalem}
The set $\Lambda_{S,d}$ of spectral radii of $\omega \in \Omega_d$ is given by 
\[ \Lambda_{S,d} = \{ \rho(\chi_{M, \bar n} ):  \bar n = (n_1, \dots, n_k), n_i \ge 1, k+1 = |M|, M \in \mathcal{M}_d\},\]
where $\rho$ denotes the largest real root, $|M|$ is the number of rows of $M$, and $\chi_{M, \bar n}$ is the polynomial defined in (\ref{E:charpoly}).
Thus, the set $\Lambda_S$ of all Salem numbers and reciprocal quadratic integers in the Weyl spectrum is given by: 
\[ \Lambda_S = \cup_d \Lambda_{S,d}. \]
\end{thm}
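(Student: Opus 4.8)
The plan is to assemble the statement from results already established, reducing everything to two facts: (1) every element of $\Omega_d$ has orbit data $(M,\bar n)$ with $M\in\mathcal{M}_d$ and arbitrary positive $\bar n$, and conversely every such pair is realized; (2) the spectral radius of an element with orbit data $(M,\bar n)$ is exactly $\rho(\chi_{M,\bar n})$. For the containment $\Lambda_{S,d}\subseteq\{\rho(\chi_{M,\bar n})\}$, I would start with $\omega\in\Omega_d$. By Proposition \ref{P:decom}, $\omega$ is conjugate to $\omega_c\cdot\omega_s\cdot\omega_p$, and since conjugation and the cyclotomic factor $\omega_p$ do not change the spectral radius, $\lambda(\omega)=\lambda(\omega_c\cdot\omega_s)$. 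The orbit data $\mathcal{O}(\omega)=(\tilde\omega_c,n(\omega))$ is by definition a pair $(M,\bar n)$ with $M=\tilde\omega_c$; that $M\in\mathcal{M}_d$ is precisely the definition of $\mathcal{M}_d$ as $\{\tilde\omega_c:\omega\in\Omega_d\}$, which by the lemma following Theorem \ref{T:charpoly} is finite, and by Theorem \ref{T:allbase} is the set described recursively. Then Theorem \ref{T:charpoly} gives $\lambda(\omega)=\rho(\chi_{M,\bar n})$, so $\lambda(\omega)$ lies in the claimed set.

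For the reverse containment, I would take any $M\in\mathcal{M}_d$ and any $k$-tuple $\bar n$ of positive integers, where $k+1=|M|$. By Lemma \ref{L:alllength}, there is an element $\omega\in\Omega_d$ with $\mathcal{O}(\omega)=(M,\bar n)$: the construction in \eqref{E:ws} produces the cyclic parts $\omega_s$ of arbitrary prescribed lengths, and combining with the fixed $\omega_c$ recovered from $M$ via \eqref{E:wc} gives an honest element of $W_\infty$ of Weyl degree $d$. Then Theorem \ref{T:charpoly} again identifies $\lambda(\omega)=\rho(\chi_{M,\bar n})$, so every element of the right-hand set arises as $\lambda(\omega)$ for some $\omega\in\Omega_d$. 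This establishes the displayed formula for $\Lambda_{S,d}$.

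Finally, for $\Lambda_S=\cup_d\Lambda_{S,d}$: by Subsection \ref{SS:wdeg} (McMullen's theorem that characteristic polynomials of Coxeter elements are products of cyclotomics and at most one Salem factor, combined with Theorem \ref{T:df}), the set of all spectral radii $>1$ of $W_\infty$ equals $\Lambda_S$, and by the decomposition into degrees, $\{\lambda(\omega):\omega\in W_\infty\}=\cup_{d\ge 1}\Lambda_d$. Elements of $\Omega_1$ permute the $e_i$ and have spectral radius $1$, so $\Lambda_S\subseteq\cup_{d\ge 2}\Lambda_{S,d}$, and the reverse is immediate since each $\Lambda_{S,d}\subseteq\Lambda_S$.

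The main obstacle is checking that Lemma \ref{L:alllength} genuinely yields an element of $\Omega_d$ for \emph{every} $\bar n$ — i.e., that prescribing the cycle lengths freely is always consistent with $M$ lying in $\mathcal{M}_d$ and with the resulting matrix $M(\bar n)$ being a genuine element of some $W_n$ (so that the ambient $n$ may have to grow with $|\bar n|$). This is handled by the explicit realization in \eqref{E:wc}–\eqref{E:ws} together with the remark that $M(\bar n)$ is exactly the matrix representation of such an $\omega$, but it is the one place where the "recipe" must be verified to land inside the Coxeter group rather than merely in $O(L_n)$; everything else is bookkeeping over the already-proven Theorems \ref{T:charpoly} and \ref{T:allbase}.
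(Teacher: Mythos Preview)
Your proposal is correct and follows essentially the same approach as the paper: the paper's proof simply cites Theorem~\ref{T:charpoly} and Lemma~\ref{L:alllength} for the formula for $\Lambda_{S,d}$, and Theorem~\ref{T:allbase} (with the base case $\mathcal{M}_2$) for the recursive determination of $\mathcal{M}_d$. You have unpacked these citations into the two containments and spelled out the $\Lambda_S=\cup_d\Lambda_{S,d}$ claim more explicitly than the paper does, but the underlying ingredients and logic are identical; your closing caveat about Lemma~\ref{L:alllength} is exactly the point the paper addresses (tersely) in that lemma's proof.
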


\begin{proof}
The first part of this theorem is an immediate consequence of Theorem \ref{T:charpoly} and Lemma \ref{L:alllength}.
Also, we have: \[ \mathcal{M}_2 = \mathcal{O}_M, \qquad \text{where}\qquad  M= \begin{bmatrix} 2&1&1&1\\-1&-1&0&0\\-1&0&-1&0\\-1&0&0&-1 \end{bmatrix}.\] Thus, we can determine all matrices in $\mathcal{M}_d$
by Theorem \ref{T:allbase}. Consequently, we have the set of all Salem numbers and reciprocal quadratic integers in the Weyl Spectrum. 
\end{proof}

Suppose $M \in \mathcal{M}_d$ is given by $(k+1) \times (k+1)$ matrix. Recall that for a proper subset $I \subset \{ 1, 2, \dots, k\}$,  $M_I$ is the principal sub-matrix of $M$ obtained by deleting the $(i+1)^\text{th}$ column and row for $i \not\in I $.
And let $\mathcal{M}'_d$ be the set of all such sub-matrices $M_I$, that is,
\begin{equation}\label{E:setM} \mathcal{M}'_d = \{ M_I : I \subsetneq \{ 1, \dots, k\}, k+1=|M|, M \in \mathcal{M}_d\}.\end{equation}

\begin{thm}\label{T:allpisot}
For each $d\ge 2$, let $\Lambda_{P,d}$ be the set defined by
 \[ \Lambda_{P,d} = \{ \rho(\chi_{M', \bar n'}): \bar n' = (n_1, \dots, n_k), n_i \ge 1, k+1 = |M'|, M' \in \mathcal{M}'_d\},\]
where $\rho$ denotes the largest real root, $|M|$ is the number of rows of $M$, and $\chi_{M, \bar n}$ is the polynomial defined in (\ref{E:charpoly}).
The set $\Lambda_P$ of all Pisot numbers in the Weyl spectrum is given by the union of $\Lambda_{P,d}$ over all $d\ge 2$:
\[ \Lambda_P = \cup_d \Lambda_{P,d}. \]
\end{thm}

\begin{proof}
By Theorem \ref{P:mono} and Proposition \ref{P:pisot}, we see that all Pisot numbers can be obtained using certain sub-matrices of $M$ for some $M \in \cup_d \mathcal{M}_d$. Given that $\mathcal{M}_d$ is defined for all $d\ge 2$, we can determine all Pisot numbers in the Weyl spectrum. 
\end{proof}

\begin{proof}[Proof of Theorem A and Proof of Main Theorem]
By setting  $\Lambda_d = \Lambda_{S,d} \cup \Lambda_{P,d}$, we immediately obtain Theorem A and the Main Theorem as direct consequences of Theorems \ref{T:allsalem} and \ref{T:allpisot}. 
\end{proof}

\section{Realization of small Pisot numbers in the Dynamical Spectrum}\label{S:pisot}
A Pisot number is an algebraic integer greater than $1$ whose Galois conjugates all have a modulus less than $1$. The smallest Pisot number, $p_1 \approx 1.3247$, is the largest real root of the polynomial $t^3-t-1$.The smallest accumulation point of Pisot numbers is the golden ratio $\phi=(1+\sqrt{5})/2 \approx 1.6180$, which is the largest real root of $t^2-t-1$. Therefore, the golden ratio $\phi$ is also a Pisot number. 

All Pisot numbers smaller than the golden ratio are listed in \cite{Pisot:1955}.
Let us define the following functions:
\[ \begin{aligned}&\theta_1(n) \ : =\  t^n(t^2-t-1)+t^2-1,\\&\theta_2(n) \ := \ \frac{t^{2n} (t^2-t-1)+1}{t-1},\\&\theta_3(n) \ :=\ \frac{t^{2n+1} (t^2-t-1)+1}{t^2-1},\\&\theta_s\ :=\ t^6-2 t^5+t^4-t^2+t-1.\end{aligned}\]
Then, the $n^\text{th}$ smallest Pisot number, $p_n$, is the largest real root of $P_n(t)$ where: 
\begin{equation}\label{E:pisotList} 
\begin{aligned}P_1 = \theta_1(1), \ &P_2 =\theta_1(2), \ P_3=\theta_1(3), \ P_4=\theta_3(1), \\ &P_5= \theta_1(4), \ P_6=\theta_2(2),\ P_7=\theta_1(5), \ P_8= \theta_s, \ P_9 = \theta_3(2). \end{aligned}\end{equation}
For $P_m$ where $m>9$, we have:
\begin{equation}\label{E:pisotList2} 
P_m \ =\  \left\{ \begin{aligned}& \theta_1( m/2+1) \qquad\qquad\ \   \text{if   } m>9, m \equiv 0 (\text{mod }2), \\ & \theta_2( (m-3)/4+1) \qquad\  \text{if   } m>9, m \equiv 3 (\text{mod }4), \\  &\theta_3( (m-1)/4) \qquad\qquad \text{if   } m>9, m \equiv 1 (\text{mod }4). \end{aligned} \right. \end{equation}

\subsection{Small Pisot numbers in the Weyl Spectrum}
There are three possible orbit permutations for $\omega$ with $\deg(\omega) =2$. We have:
\[ \mathcal{M}_2 = \left \{   M_1=\begin{bmatrix} 2&1&1&1\\-1&0&-1&-1\\-1&-1&0&-1\\-1&-1&-1&0 \end{bmatrix},\   M_2=\begin{bmatrix} 2&1&1&1\\-1&-1&0&-1\\-1&0&-1&-1\\-1&-1&-1&0 \end{bmatrix}, \  M_3=\begin{bmatrix} 2&1&1&1\\-1&-1&-1&0\\-1&0&-1&-1\\-1&-1&0&-1 \end{bmatrix} \right\}. \]
Let us define:
\[ M_2' =\begin{bmatrix} 2&1&1\\-1&-1&0\\-1&0&-1 \end{bmatrix}, \quad M'_3=\begin{bmatrix} 2&1&1\\-1&-1&-1\\-1&0&-1\end{bmatrix}, \quad \text{ and } \quad M'=\begin{bmatrix} 2&1\\-1&-1 \end{bmatrix}.\]
Also, let \[ f_i(n_1, n_2) \ =\   \chi_{M'_i, (n_1, n_2)} \quad \text{for} \quad i=2,3, \qquad \text{and} \] \[ f(n_1) \ = \ \chi_{M', (n_1)}, \] where $ \chi_{M, \bar n}$ is the polynomial defined in (\ref{E:charpoly}).

Since $M_2'$ and $M_3'$ are submatrices of $M_2$ and $M_3$, respectively, obtained by deleting the $4^\text{th}$ column and row, by Theorem \ref{T:allpisot}, we know that the largest real root of $f_i(n_1, n_2)$ is a Pisot number for all $i=1,2$ and $n_1, n_2 \ge 1$. Similarly, since $M'$ is a principal sub-matrix of both $M_2$ and $M_3$ obtained by deleting the $3^\text{rd}$ and $4^\text{th}$ columns and rows, the largest real root of $f(n_1)$ is a Pisot number for all $n_1 \ge 1$. 

Using the formula in (\ref{E:charpoly}), we see that:
\[ f_3(n_1, n_2) = t^{n_1+n_2+1}- 2 t^{n_1+n_2} + t^{n_1+1}+t^{n_2+1} -t^{n_1}-t^{n_2} + t-1,\]
\[ f_2(n_1, n_2) = t^{n_1+n_2+1}-2 t^{n_1+n_2} + t^{n_1+1}+t^{n_2+1} -t^{n_1}-t^{n_2} + t,\]
and
\[ f(n_1) = t^{n_1+1} -2 t^{n_1} +t-1.\]
Thus, we have the following lemmas:
\begin{lem}\label{L:pW1}
For each $n \ge 1$, we have \[ f_3(1,n) = \theta_1(n), \]
and \[ f_3(2,3) = \theta_s.\]
\end{lem}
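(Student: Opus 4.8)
The plan is to prove both identities by direct substitution into the closed-form expression for $f_3(n_1,n_2)$ displayed immediately before the lemma, namely
\[ f_3(n_1, n_2) = t^{n_1+n_2+1}- 2 t^{n_1+n_2} + t^{n_1+1}+t^{n_2+1} -t^{n_1}-t^{n_2} + t-1, \]
and then matching the outcome against the definitions of $\theta_1(n)$ and $\theta_s$ recorded in (\ref{E:pisotList}). No further input is needed, since the formula for $f_3$ has already been derived from (\ref{E:charpoly}).

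First I would set $n_1=1$ and $n_2=n$. Substitution gives
\[ f_3(1,n) = t^{n+2} - 2t^{n+1} + t^{2} + t^{n+1} - t - t^{n} + t - 1. \]
The two terms $-t$ and $+t$ cancel, and $-2t^{n+1}+t^{n+1}=-t^{n+1}$, leaving $f_3(1,n)=t^{n+2}-t^{n+1}-t^{n}+t^{2}-1$. On the other hand $\theta_1(n)=t^{n}(t^{2}-t-1)+t^{2}-1=t^{n+2}-t^{n+1}-t^{n}+t^{2}-1$, so $f_3(1,n)=\theta_1(n)$.

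Next I would set $n_1=2$, $n_2=3$. Substitution and collecting terms gives
\[ f_3(2,3) = t^{6} - 2t^{5} + t^{3} + t^{4} - t^{2} - t^{3} + t - 1 = t^{6}-2t^{5}+t^{4}-t^{2}+t-1, \]
which is exactly $\theta_s$ by definition. The only thing requiring care is the exponent bookkeeping in these cancellations; there is no conceptual obstacle, and this completes the proof.
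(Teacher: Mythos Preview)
Your proof is correct. The paper does not provide an explicit proof of this lemma; it is left as an immediate consequence of the displayed formula for $f_3(n_1,n_2)$, and your direct substitution and term-by-term cancellation is precisely the intended verification.
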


\begin{lem}\label{L:pW2}
For each $ n \ge 1$, we have 
\[ f_2(1, 2n+2) = t^2 (t-1)\theta_2(n), \qquad\text{and} \qquad  f_2(1, 2n+3) = t^2 (t^2-1) \theta_3(n). \]
\end{lem}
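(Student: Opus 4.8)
The plan is to reduce everything to the closed-form expression for $f_2(n_1,n_2)$ already recorded just above the statement, and then carry out one short algebraic simplification. Recall that this closed form is itself nothing but the cofactor expansion of $\chi_{M_2',(n_1,n_2)}$ via formula (\ref{E:charpoly}) applied to the $3\times 3$ matrix $M_2'$, so no new computation of $f_2$ is needed.

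First I would specialize
\[ f_2(n_1, n_2) = t^{n_1+n_2+1}-2 t^{n_1+n_2} + t^{n_1+1}+t^{n_2+1} -t^{n_1}-t^{n_2} + t \]
to $n_1 = 1$. Writing $m = n_2$, the linear terms $-t^{n_1}+t = -t+t$ cancel, while $t^{n_2+1}-2t^{n_1+n_2} = t^{m+1}-2t^{m+1} = -t^{m+1}$, so
\[ f_2(1,m) = t^{m+2} - t^{m+1} - t^{m} + t^2 = t^{m}\,(t^2-t-1) + t^2 . \]
This single identity is the crux of the lemma; the rest is matching it against the definitions of $\theta_2$ and $\theta_3$.

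Next, for the first assertion I would put $m = 2n+2$, obtaining $f_2(1,2n+2) = t^{2n+2}(t^2-t-1)+t^2 = t^2\big(t^{2n}(t^2-t-1)+1\big)$; since by definition $(t-1)\,\theta_2(n) = t^{2n}(t^2-t-1)+1$, this equals $t^2(t-1)\theta_2(n)$. For the second assertion I would put $m = 2n+3$, obtaining $f_2(1,2n+3) = t^{2n+3}(t^2-t-1)+t^2 = t^2\big(t^{2n+1}(t^2-t-1)+1\big) = t^2(t^2-1)\theta_3(n)$, using $(t^2-1)\,\theta_3(n) = t^{2n+1}(t^2-t-1)+1$.

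I do not anticipate any real obstacle: once the closed form for $f_2$ is in hand, the lemma is a one-line substitution. The only point worth a remark is that $\theta_2(n)$ and $\theta_3(n)$ were defined as quotients, so strictly one should observe that they are genuine polynomials; but this is automatic here, since the computation exhibits $t^{2n}(t^2-t-1)+1$ and $t^{2n+1}(t^2-t-1)+1$ as $(t-1)\theta_2(n)$ and $(t^2-1)\theta_3(n)$ respectively (equivalently, $t=1$ is a root of the former and $t=\pm1$ are roots of the latter, which one checks by direct substitution).
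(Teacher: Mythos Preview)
Your proposal is correct and is precisely the direct computation the paper intends: the lemma is stated without proof, and the only available route is to specialize the displayed formula for $f_2(n_1,n_2)$ to $n_1=1$ and match against the definitions of $\theta_2$ and $\theta_3$, which you do cleanly.
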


\begin{lem}\label{L:pW3}
For $n_1=1$, we have \[ f(1) = t^2-t-1.\]
\end{lem}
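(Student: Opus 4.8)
The statement to prove is Lemma~\ref{L:pW3}: that $f(1) = t^2 - t - 1$, where $f(n_1) = \chi_{M', (n_1)}$ and $M' = \begin{bmatrix} 2 & 1 \\ -1 & -1 \end{bmatrix}$.

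The plan is simply to substitute $n_1 = 1$ into the closed form for $f(n_1)$ that has already been derived just before the lemma, namely $f(n_1) = t^{n_1+1} - 2t^{n_1} + t - 1$. Setting $n_1 = 1$ gives $f(1) = t^2 - 2t + t - 1 = t^2 - t - 1$, which is exactly the claimed identity. Since the general formula for $f(n_1)$ is already in hand (obtained from the definition (\ref{E:charpoly}) applied to the $2\times 2$ matrix $M'$ with the single orbit length $n_1$), the entire content of the proof is this one-line evaluation.

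Alternatively, if one prefers to work directly from (\ref{E:charpoly}) without citing the intermediate formula: for $k = 1$ and $\bar n = (n_1) = (1)$, the matrix $M'(\bar n)$ is just $M'$ itself (no subdivision occurs when $n_1 = 1$ since the cycle has length $1$), so $\chi_{M',(1)}(t) = \det(tI - M')$ up to sign normalization. Computing $\det\!\left(\begin{bmatrix} 2-t & 1 \\ -1 & -1-t \end{bmatrix}\right) = (2-t)(-1-t) + 1 = -2 - 2t + t + t^2 + 1 = t^2 - t - 1$, which again yields the result. Either route is immediate.

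There is essentially no obstacle here; this lemma is a trivial specialization recorded for later use (it identifies $f(1)$ as the minimal polynomial of the golden ratio $\phi$, the smallest accumulation point of Pisot numbers, which is then invoked in the proof of Theorem~C). The only thing to be careful about is the sign and leading-coefficient convention in (\ref{E:charpoly}) — one should confirm that $\chi_{M,\bar n}$ is normalized to be monic in $t$ as stated in Theorem~\ref{T:charpoly}, so that the answer comes out as $t^2 - t - 1$ rather than its negative. That check is routine.
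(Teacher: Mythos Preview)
Your proposal is correct and matches the paper's approach: the lemma is stated without proof in the paper, since it is an immediate specialization of the formula $f(n_1) = t^{n_1+1} - 2t^{n_1} + t - 1$ derived just above it. Your one-line evaluation (and the alternative direct determinant computation) are exactly what is intended.
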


Therefore, we have:

\begin{prop}\label{P:pisotinW}
Every Pisot number smaller than or equal to the golden ratio is realized as a dynamical degree of a birational map. 
\end{prop}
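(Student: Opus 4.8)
The plan is to match the classification of the small Pisot numbers against the orbit‑data formula (\ref{E:charpoly}) for degree‑two Coxeter elements, using the explicit polynomial identities recorded in Lemmas \ref{L:pW1}--\ref{L:pW3}. First I would recall Pisot's enumeration \cite{Pisot:1955}: every Pisot number strictly below the golden ratio is one of the numbers $p_m$, $m\ge 1$, whose minimal polynomial $P_m$ is listed in (\ref{E:pisotList})--(\ref{E:pisotList2}); inspecting that list, each $P_m$ equals $\theta_1(n)$, $\theta_2(n)$, or $\theta_3(n)$ for a suitable $n\ge 1$, or the sporadic polynomial $\theta_s$. Adjoining the golden ratio $\phi$ itself, the root of $t^2-t-1$, yields the complete list of Pisot numbers $\le\phi$, so it suffices to realize each of them as a dynamical degree.

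Next, for each of these polynomials I would exhibit orbit data $(M',\bar n')$ with $M'$ a principal sub‑matrix of a member of $\mathcal{M}_2$ whose characteristic polynomial $\chi_{M',\bar n'}$ of (\ref{E:charpoly}) equals the given Pisot polynomial up to cyclotomic factors. Concretely, Lemma \ref{L:pW1} gives $\chi_{M_3',(1,n)}=f_3(1,n)=\theta_1(n)$ for every $n\ge 1$ and $\chi_{M_3',(2,3)}=\theta_s$; Lemma \ref{L:pW2} gives $\chi_{M_2',(1,2n+2)}=f_2(1,2n+2)=t^2(t-1)\,\theta_2(n)$ and $\chi_{M_2',(1,2n+3)}=f_2(1,2n+3)=t^2(t^2-1)\,\theta_3(n)$; and Lemma \ref{L:pW3} gives $\chi_{M',(1)}=f(1)=t^2-t-1$. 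In each case $M_2'$, $M_3'$, $M'$ lie in $\mathcal{M}'_2$ in the sense of (\ref{E:setM}), being principal sub‑matrices of $M_2$ or $M_3\in\mathcal{M}_2$ that retain the first row and column. Since the surplus factors $t^2(t-1)$ and $t^2(t^2-1)$ vanish only at $0$, $1$ and $-1$, the largest real root of each $\chi_{M',\bar n'}$ coincides with the largest real root of the corresponding $P_m$ (respectively with $\phi$), i.e.\ with the Pisot number in question.

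Finally, by Theorem \ref{T:allpisot} the largest real root of $\chi_{M',\bar n'}$ with $M'\in\mathcal{M}'_2$ belongs to $\Lambda_P$, the set of Pisot numbers in the Weyl spectrum; hence every $p_m$, as well as $\phi$, lies in $\Lambda_P\subset\Lambda_W$. Because $\Lambda_W=\Lambda_\mathbb{C}$ (from the equality $\Lambda_{S,\mathbb{C}}=\Lambda_{S,W}$ together with the fact, due to Blanc and Cantat \cite{blancdynamical}, that $\Lambda_\mathbb{C}$ is well ordered and closed), each of these Pisot numbers is the dynamical degree of a birational map of $\mathbf{P}^2(\mathbb{C})$; one may also produce such maps directly via Diller's construction \cite{Diller:2011}.

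I expect the only real work to be the bookkeeping in the first two steps: checking that $\theta_1$, $\theta_2$, $\theta_3$, the sporadic $\theta_s$, and the root of $t^2-t-1$ genuinely exhaust Pisot's list with no omissions, and that the parity shifts $2n+2$, $2n+3$ in Lemma \ref{L:pW2} supply exactly the arguments $\theta_2(n)$, $\theta_3(n)$ needed to cover every $P_m$ in (\ref{E:pisotList2}). The polynomial identities themselves are the content of Lemmas \ref{L:pW1}--\ref{L:pW3}, and the cyclotomic (indeed, $0$ and $\pm1$) nature of the surplus factors makes the comparison of largest real roots immediate.
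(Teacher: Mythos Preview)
Your proposal is correct and follows the same approach as the paper: invoke Lemmas \ref{L:pW1}--\ref{L:pW3} to exhibit each small Pisot polynomial (from Pisot's list (\ref{E:pisotList})--(\ref{E:pisotList2}) together with $t^2-t-1$) as a $\chi_{M',\bar n'}$ with $M'\in\mathcal{M}'_2$, then use $\Lambda_W=\Lambda_\mathbb{C}$ to pass from the Weyl spectrum to actual dynamical degrees. The paper's own proof is a single sentence to this effect; your write-up is more explicit about the bookkeeping (the surplus cyclotomic factors, the coverage of Pisot's list) but the logic is identical.
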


\begin{proof}
Since the dynamical spectrum is identical to the Weyl spectrum, this Proposition follows from the previous Lemmas \ref{L:pW1} --- \ref{L:pW3}.
\end{proof}

\begin{rem}
For any given $d\ge 2$, the set of orbit permutations $\mathcal{M}_d$ is finite. Given an orbit permutation $M \in \mathcal{M}_d$, Proposition \ref{P:mono} shows that the spectral radius increases with orbit lengths. Thus, it is feasible to check small Pisot numbers in $\Lambda_d$ for a given $d\ge 2$. Since the conjugacy class of an element $\omega \in W_\infty$ is infinite, we would have $\Lambda_d \subset \Lambda_d'$ for $ d\le d'$ where $\Lambda_d$ is the set of spectral radii of $\omega \in \Omega_d$ of degree $d$. Through computer experiments, we have made the following interesting observations:
\begin{itemize}
\item In $\Lambda_2$, the smallest Pisot number $p_\star$ strictly bigger than the golden ratio is a root of 
\[ t^7-2 t^6+t^5-t^4+t^3-t^2+t-1=0,\] and it is approximately $ 1.64073$. This Pisot number is the limit of spectral radii of $\omega_n \in \Omega_2$ as $n\to \infty$, where \[ \mathcal{O}(\omega_n) = (M_3, \bar n =(2,4,n)).\]
Additionally, this Pisot number can be realized as a dynamical degree of a birational map of degree 2. 
\item In $\Lambda_3$, the smallest Pisot number strictly bigger than the golden ratio is again $p_\star$. This Pisot number  is the limit of spectral radii of $\omega_n \in \Omega_3$ as $n\to \infty$ where \[ \mathcal{O}(\omega_n) = \left(\begin{bmatrix}3&1&2&1&1&1\\-2&-1&-1&-1&-1&-1\\-1&-1&-1&0&0&0\\-1&0&-1&0&0&-1\\-1&0&-1&-1&0&0\\-1&0&-1&0&-1&0\end{bmatrix}, \bar n =(1,2,1,3,n)\right).\]
\item In $\Lambda_3\setminus \Lambda_2$, the smallest Pisot number strictly bigger than the golden ratio is the root of \[ t^7-t^6-2 t^5+2 t^3+t^2-t-1=0\] and it is approximately $ 1.75827$.
\item Based on computer experiments, it seems that $p_\star$ is the smallest Pisot number strictly greater than the golden ratio in the Weyl spectrum. However, we have not ruled out the possibility of the existence of a Pisot number between the golden mean and $p_\star$ in $\Lambda_d$ for sufficiently large $d$. 
\end{itemize}
\end{rem}
\subsection{Realization}\label{SS:realization}
Notice that for each $i=1,2$ and $n_1, n_2 \ge 1$, we have:
\[ \rho(f_i(n_1, n_2) ) = \lim_{n_3 \to \infty} \rho(\chi_{M_i, (n_1, n_2, n_3)}),\] where $\rho$ denotes the largest real root. 
Diller \cite{Diller:2011} provided a method for constructing a quadratic birational map properly fixing a cubic curve. 

\begin{thm}\cite[Theorem~1]{Diller:2011}\label{T:diller}
Let $C\subset \mathbf{P}^2(\mathbb{C})$ be an irreducible cubic curve. Suppose $p_1, p_2,p_3 \in C_\text{reg}$ are points in the smooth locus of $C$, $b \ne 0 \in C_\text{reg}$, and $a \in \mathbb{C}^*$. Then there exists the unique quadratic birational map $f$ properly fixing $C$ with $\mathcal{I}(f) = \{p_1,p_2,p_3\}$ and the restriction $f|_{C_\text{reg}} : z \to a z+b$ if and only if $a(p_1+p_2+p_3) = 3 b$.
In this case the points of indeterminacy for $f^{-1}$ are given by $\{ a p_i-2b, i=1,2,3\}$.
\end{thm}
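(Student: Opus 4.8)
The plan is to reduce the statement to a question about $\text{PGL}_3(\mathbb{C})$ and then settle that question using the group law on the cubic $C$. First I would invoke the structure of quadratic Cremona maps: any quadratic birational map of $\mathbf{P}^2(\mathbb{C})$ whose indeterminacy locus consists of three distinct, non-collinear points $p_1,p_2,p_3$ can be written as $f=L\circ\sigma$ for a unique $L\in\text{PGL}_3(\mathbb{C})$, where $\sigma=\sigma_{p_1,p_2,p_3}$ is the standard quadratic involution with $\mathcal{I}(\sigma)=\{p_1,p_2,p_3\}$, blowing up each $p_i$ and contracting the line $\ell_i:=\overline{p_jp_k}$ (with $\{i,j,k\}=\{1,2,3\}$) to $p_i$. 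Since each $p_i$ is a smooth point of $C$ of multiplicity $1$, $\sigma$ induces an isomorphism $\sigma|_C\colon C\to C':=\sigma(C)$ onto another integral cubic, and on divisor classes supported on the regular locus one has $\sigma^*\!\big(\mathcal{O}(1)|_{C'}\big)\sim 2\,\mathcal{O}(1)|_C-p_1-p_2-p_3$. I normalize the group law on $C_{\mathrm{reg}}$ so that the origin is an inflection point; then a degree-$3$ Cartier divisor $D$ supported on $C_{\mathrm{reg}}$ satisfies $D\sim\mathcal{O}(1)|_C$ exactly when its group sum is $0$, and collinear triples have group sum $0$. With this, the problem becomes: for which $a,b$ does there exist an $L\in\text{PGL}_3(\mathbb{C})$ such that $f=L\circ\sigma$ fixes $C$ with $f|_{C_{\mathrm{reg}}}=\phi_0$, where $\phi_0(z)=az+b$?

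The heart of the argument is the following. If $f=L\circ\sigma$ fixes $C$ with $f|_{C_{\mathrm{reg}}}=\phi_0$, then necessarily $L(C')=C$ and $L|_{C'}=\psi:=\phi_0\circ(\sigma|_C)^{-1}\colon C'\to C$. An isomorphism between two plane cubics is induced by an element of $\text{PGL}_3(\mathbb{C})$ if and only if it carries the embedding line bundle $\mathcal{O}(1)|_{C'}$ to $\mathcal{O}(1)|_C$ — both cubics being embedded by the complete linear system of their degree-$3$ Cartier class, of projective dimension $2$ — and such an extension is then unique because a plane cubic spans $\mathbf{P}^2(\mathbb{C})$. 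Transporting $\mathcal{O}(1)|_{C'}$ to $C$ along $\sigma|_C$, the condition $\psi^*\mathcal{O}(1)|_C\cong\mathcal{O}(1)|_{C'}$ turns into the class identity on $C$
\[ \phi_0^*\!\big(\mathcal{O}(1)|_C\big)\ \sim\ 2\,\mathcal{O}(1)|_C-p_1-p_2-p_3. \]
Because $\phi_0^{-1}(z)=a^{-1}(z-b)$ sends a degree-$3$ divisor of group sum $s$ to one of group sum $a^{-1}s-3a^{-1}b$, comparing group sums of the two sides (both degree $3$, and $\mathcal{O}(1)|_C$ has group sum $0$) yields $-3a^{-1}b=-(p_1+p_2+p_3)$, i.e. $a(p_1+p_2+p_3)=3b$. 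Conversely, when this equality holds the displayed class identity holds, so $\psi$ extends uniquely to some $L\in\text{PGL}_3(\mathbb{C})$ and $f:=L\circ\sigma$ is the unique map with the required indeterminacy and restriction; it properly fixes $C$ since $f|_{C_{\mathrm{reg}}}=\phi_0$ is a nonconstant automorphism of $C_{\mathrm{reg}}$.

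For the last assertion, $f$ contracts exactly the three lines $\ell_i$, and $f(\ell_i)=L(\sigma(\ell_i))=L(p_i)=\psi(p_i)=\phi_0(r_i)$, where $r_i\in C_{\mathrm{reg}}$ is the residual intersection point of $\ell_i$ with $C$, so $r_i=-(p_j+p_k)$ in the group law; then $\phi_0(r_i)=ar_i+b=-a(p_1+p_2+p_3)+ap_i+b=ap_i-2b$ by the relation just derived, whence $\mathcal{I}(f^{-1})=\{ap_i-2b:i=1,2,3\}$. The main obstacle I expect is getting the constants and normalizations to come out exactly: one must pin down $\mathcal{O}(1)|_{C'}$ precisely and fix the origin of the group law so the relation appears as $a(p_1+p_2+p_3)=3b$ rather than shifted by a translation, make the divisor-class computations rigorous on a possibly singular cubic (using Cartier divisors on the smooth locus and the generalized Jacobian, and the correct cohomological input $h^0(\mathcal{O}(1)|_C)=3$), and verify that the hypotheses $b\neq 0$ and "$p_1,p_2,p_3$ distinct and non-collinear" are exactly what rules out the degenerate configurations in which $\sigma_{p_1,p_2,p_3}$ is not an honest quadratic Cremona map or the $\text{PGL}_3$-extension step fails.
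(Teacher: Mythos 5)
The paper offers no proof of this statement---it is quoted directly from Diller \cite[Theorem~1]{Diller:2011}---so there is no internal argument to compare yours against. What you have written is, in substance, a correct reconstruction of Diller's proof: factor $f=L\circ\sigma_{p_1,p_2,p_3}$, note that $\sigma$ restricts to an isomorphism $C\to C'$ pulling $\mathcal{O}(1)|_{C'}$ back to $2\,\mathcal{O}(1)|_C-p_1-p_2-p_3$, and use the fact that an isomorphism of plane cubics, each embedded by the complete linear system of its degree-$3$ class ($h^0(\mathcal{O}_C(1))=3$ by Riemann--Roch on the integral curve of arithmetic genus $1$), extends to $\mathrm{PGL}_3(\mathbb{C})$ exactly when it matches the two polarizations, and then uniquely because a cubic spans $\mathbf{P}^2$. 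Comparing group sums (with the origin at an inflection, so that line sections have sum $0$) correctly yields $a(p_1+p_2+p_3)=3b$, and your identification $\mathcal{I}(f^{-1})=\{\phi_0(r_i)\}$ with $r_i=-(p_j+p_k)$ gives $ap_i-2b$ as claimed. Two caveats, which are really about the looseness of the quoted statement rather than defects in your argument: (i) the factorization through the standard involution requires $p_1,p_2,p_3$ to be distinct and non-collinear; the statement as printed does not say this, and with collinear base points the net of conics degenerates so no quadratic map has that indeterminacy locus, while infinitely near points would require the degenerate quadratic involutions---you flag this yourself, and Diller's hypotheses take care of it; (ii) in the ``if'' direction, for $\psi=\phi_0\circ(\sigma|_C)^{-1}$ to be a morphism of curves you need $z\mapsto az$ to be an automorphism of the algebraic group $C_{\mathrm{reg}}$, which for nodal and smooth cubics restricts $a$ to a finite subgroup of $\mathbb{C}^*$; this constraint is implicit in Diller's setup but invisible in the statement above. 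With those hypotheses made explicit, your outline is complete and correct.
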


Let us summarize a method from \cite{Diller:2011}. Set $C=\{\gamma(t) = [1:t:t^3]: t \in \mathbb{C} \cup \{\infty\} \} \subset \mathbf{P}^2(\mathbb{C})$. Let $p_1= \gamma(t_1), p_2=\gamma(t_2), p_3=\gamma(t_3),a \in \mathbb{C}^*$, and set $b=1-a$. Using the same notation as in Example \ref{eg:bp2}, Diller \cite{Diller:2011} shows that for sufficiently large $n_3$, there exist $f\in \bptwo$ with $\mathcal{O}(f) = (M_3, (n_1,n_2,n_3))$ determined by $t_1,t_2, t_3$ and $a$ satisfying:
\begin{equation}\label{E:cyclic}\begin{aligned} & (t_2-1)/a^{n_1} +1+t_2+t_3=0,\\& (t_3-1)/a^{n_2} +1+t_1+t_3=0,\\& (t_1-1)/a^{n_3} +1+t_2+t_1=0,\\ &t_1+t_2+t_3= 3(1-a)/a. \end{aligned}\end{equation}
The first three conditions in (\ref{E:cyclic}) result from the fact that the sum of parameters at all intersection points of a line with the cubic curve $C$ must be equal to zero. The last equation is due to Theorem \ref{T:diller}.

To obtain a birational map $f\in \bptwo$ with orbit data $(M_3' ,(n_1,n_2))$, the parameters $t_1,t_2, t_3$ and $a$ need to satisfy the first, second and last equations in (\ref{E:cyclic}) while failing the third equation for all $n_3\in \mathbb{Z}_{>0}$. By counting the number of equations, we see that there are infinitely many such $t_1,t_2, t_3$ and $a$. In fact, if $f$ is birationally equivalent to an automorphism, then $a$ must be equal to $\lambda(f)$ or a Galois conjugate of the dynamical degree $\lambda(f)$. Thus, by choosing $a \in \mathbb{Z}$, we can construct a birational map with orbit data $(M_3' ,(n_1,n_2))$ that properly fixes a cubic curve $C$. A detailed discussion of constructing a birational map with given parameter values $t_1,t_2,t_3$, and $a$ is available in \cite{Diller:2011, Bedford-Diller-K, Kim:2022}. As a summary, we have:

\begin{proof}[Proof of Theorem C] Theorem C follows from Proposition \ref{P:pisotinW}. Let us describe a concrete construction using the method in \cite{Diller:2011}. 

For $(n_1, n_2) \in \{ (2,3), (1, n), n\ge 1\}$, let $t_1, t_2, t_3$ and $a$ be complex numbers satisfying:
\begin{equation}\label{E:f3}
\begin{aligned} &\frac{1}{a^{n_1}} (t_2-1)+1+ t_2+t_3 =0,\\ &\frac{1}{a^{n_2}} (t_3-1)+1+ t_1+t_3 =0,\\ &t_1+t_2+t_3 = 3 \frac{1-a}{a}, \\ & \frac{1}{a^{k}} (t_1-1)+1+ t_2+t_1 \ne 0 \quad \text{ for all } k \ge 1.
\end{aligned}
\end{equation}
The first three equations in (\ref{E:f3}) can be written in matrix form as:
\[ \begin{bmatrix} 0& 1/ a^{n_1} +1&1\\ 1&0& 1/a^{n_2}+1\\1&1&1 \end{bmatrix} \cdot \begin{bmatrix} t_1\\t_2\\t_3 \end{bmatrix} \ = \ \begin{bmatrix} 1/a^{n_1} -1\\1/a^{n_2}-1 \\ 3(1-a)/a \end{bmatrix}. \]
Since the determinant of the $3 \times 3$ matrix on the left-hand side is $1/a^{n_1+n_2} + 1/ a^{n_2}+1$, for any choice of $a>1$, there is a non-trivial solution. Since the last condition in (\ref{E:f3}) is an open condition, we see that there are always such  $t_1, t_2, t_3$ and $a$. Then, by Diller \cite{Diller:2011}, there is a birational map with orbit data $(M_3', (n_1, n_2))$. 

Similarly, there are complex numbers $t_1, t_2, t_3$ and $a$ satisfying:
\begin{equation}\label{E:f2}
\begin{aligned} &\frac{1}{a^{n_1}} (t_2-1)+1+ t_2+t_3 =0,\\ &\frac{1}{a^{n_2}} (t_1-1)+1+ t_1+t_3 =0,\\ &t_1+t_2+t_3 = 3 \frac{1-a}{a}, \\ & \frac{1}{a^{k}} (t_3-1)+1+ t_2+t_1 \ne 0 \quad \text{ for all } k \ge 1.
\end{aligned}
\end{equation}
Thus, a birational map with orbit data $(M_2', (1, n))$ exists.

Finally, we see that there exists a birational map with orbit data $(M',(1))$ using the conditions:
\begin{equation}\label{E:f}
\begin{aligned} &\frac{1}{a^{n_1}} (t_2-1)+1+ t_2+t_3 =0,\\  &t_1+t_2+t_3 = 3 \frac{1-a}{a}, \\ &\frac{1}{a^{\ell }} (t_1-1)+1+ t_1+t_3 \ne 0 \quad \text{ for all } \ell \ge 1,\\& \frac{1}{a^{k}} (t_3-1)+1+ t_2+t_1 \ne 0 \quad \text{ for all } k \ge 1.
\end{aligned}
\end{equation}
By Lemmas  \ref{L:pW1} --- \ref{L:pW3}, we see that the set of dynamical degrees of the above birational maps is the set of all Pisot numbers smaller than or equal to the golden ratio. 
\end{proof}

\section{Limit points in Pisot numbers in Weyl spectrum. }\label{ApendA}
 Due to Amara \cite[Theorem~3.1]{Amara:1966}, all limit points of Pisot numbers less than $2$ are given by
\[ \alpha_1 =\beta_1<\alpha_2<\beta_2<\alpha_3<\hat \theta'_2 < \beta_3<\alpha_4 <\cdots <\alpha_n<\beta_n <\cdots <2 \] where $\alpha_n$ is the largest real root of \[t^{n+1}-2 t^n+t-1=0,\] $\beta_n$ is the largest real root of \[t^{n+1} - (t^{n+1}-1)/(t-1)=0,\] and $\hat\theta'_2$ is the largest real root of \[t^4-t^3-2 t^2 +1=0.\]
Approximate values for the seven smallest limit points of Pisot numbers are as follows: \[ \alpha_1 \approx 1.6180,\ \  \alpha_2 \approx 1.7549,\ \   \beta_2 \approx 1.8393,\ \   \alpha_3 \approx 1.8668,\] \[ \hat\theta'_2 \approx 1.9052, \ \  \beta_3 \approx 1.9276,\ \  \text{and}\ \   \alpha_4 \approx 1.9332.\]
These algebraic numbers --- $\hat\theta'_2$, $\alpha_n, \beta_n$ for $n\ge 1$ --- are Pisot numbers that approach $2$ from below. In the following subsections, we demonstrate that these Pisot numbers belong to the Weyl spectrum. Additionally, we obtain sequences of Pisot numbers in the Weyl spectrum that approach these accumulation points --- $\hat\theta'_2$, $\alpha_n, \beta_n$ for $n\ge 1$ --- from below.

\begin{thm}
All limit points of Pisot numbers less than or equal to $2$ are in the Weyl spectrum.
\end{thm}

\subsection{Pisot number $\hat\theta'_2$}
Let $\omega_{n,m} \in \Omega_3$, where
 \[ \mathcal{O}(\omega_{n,m}) = \left(\begin{bmatrix}3&1&1&2&1&1\\-2&-1&-1&-1&-1&-1\\-1&-1&0&-1&0&0\\-1&0&-1&-1&0&0\\-1&0&0&-1&-1&0\\-1&0&0&-1&0&-1\end{bmatrix}, \bar n =(1,1,n,1,m)\right).\]
 The spectral radius of $\omega_{n,m}$ is the Salem number $\lambda_{n,m}$, given by the largest real root of \[ t^{4+n+m}-t^{3+n+m}-2 t^{2+n+m}+t^{n+m}+t^{4+n}-t^{2+n}+t^n+t^{4+m}-t^{2+m}+t^m+t^4-2 t^2-t+1=0.\]
 As $m$ goes to infinity, $\lambda_{n,m}$ approaches the Pisot number $p_n$, the largest real root of \[ t^{4+n}-t^{3+n}-2 t^{2+n}+t^{n}+t^4-t^2+1=0.\]
 Thus, we have \[ \lim_{n \to \infty} p_n = \hat\theta'_2.\]
Using essentially the same argument as in Section \ref{SS:realization}, we see that the Pisot numbers $p_n$ and $\hat\theta'_2$ are realized as dynamical degrees of birational maps that are not birationally equivalent to automorphisms.

\subsection{Pisot numbers $\alpha_n, n\ge 1$}
Let $\omega_{n,m,\ell} \in \Omega_2$, where \[ \mathcal{O}(\omega_{n,m,\ell}) = \left( \begin{bmatrix} 2&1&1&1\\-1&-1&-1&0\\-1&0&-1&-1\\-1&-1&0&-1 \end{bmatrix}, \bar n=(n,m,\ell) \right).\]
The spectral radius of $\omega_{n,m,\ell}$ is the Salem number $\lambda_{n,m,\ell}$, which is the largest real root of \[ (t-1)((t^n+1) (t^m+1)(t^\ell+1)+1)-(t^{n+m+\ell}-1)=0.\] As $\ell$ goes to infinity, $\lambda_{n,m,\ell}$ approaches the Pisot number $p_{n,m}$, the largest real root of \[ t^{n+m+1}-2 t^{n+m}+t^{1+n}-t^n+t^{m+1}-t^m+t-1=0.\]
Thus, we have \[\lim_{m \to \infty}p_{n,m} = \alpha_n \qquad \text{for all } n \ge 1.\]

\subsection{Pisot numbers $\beta_n, n\ge 1$}
Let $\omega_{n,m,\ell} \in \Omega_2$, where \[ \mathcal{O}(\omega_{n,m,\ell}) = \left( \begin{bmatrix} 2&1&1&1\\-1&0&-1&-1\\-1&-1&0&-1\\-1&-1&-1&0 \end{bmatrix}, \bar n=(n,m,\ell) \right).\]
The spectral radius of $\omega_{n,m,\ell}$ is the Salem number $\lambda_{n,m,\ell}$, which is the largest real root of
 \[ t^{n+m+\ell+1} - 2 t^{n+m+\ell} + t^{n+m}+t^{n+\ell} + t^{m+\ell}- t^{1+n}-t^{1+m}-t^{1+\ell} + 2t-1=0.\] As $\ell$ goes to infinity, $\lambda_{n,m,\ell}$ approaches the Pisot number $q_{n,m}$, the largest real root of \[ t^{n+m+1}-2 t^{n+m}+t^n+t^m-t=0.\]
Since $t^{n+1}-2 t^n+1 = (t-1) (t^n - (t^n-1)/(t-1))$, we have \[\lim_{m \to \infty}q_{n+1,m} = \beta_n \qquad \text{for all } n \ge 1.\]

Again, using essentially the same argument as in Section  \ref{SS:realization}, we see that the Pisot numbers $p_{n,m}$, $q_{n,m}$, $\alpha_n$ and $\beta_n$  are realized as dynamical degrees of birational maps that are not birationally equivalent to automorphisms.

\bibliographystyle{plain}
\bibliographystyle{unsrt}
\bibliography{biblio}
\end{document}